\theoremstyle{plain}
\newtheorem{thm}{\protect\theoremname}
  \theoremstyle{definition}
  \newtheorem{defn}[thm]{\protect\definitionname}
  \theoremstyle{plain}
  \newtheorem{prop}[thm]{\protect\propositionname}
  \theoremstyle{plain}
  \newtheorem{cor}[thm]{\protect\corollaryname}
\newlength{\myArraycolsep}
\newcommand*{\arrowovernarrow}{\mathrel{\mathop{\vcenter{\offinterlineskip\ialign{\hbox to\dimexpr4mm{##}\cr$\to$\cr\noalign{\kern-0.8mm}$\not\to$\cr}}}}}
\newcommand*{\arrowoverarrow}{\mathrel{\mathop{\vcenter{\offinterlineskip\ialign{\hbox to\dimexpr4mm{##}\cr$\to$\cr\noalign{\kern0.5mm}$\to$\cr}}}}}
  \providecommand{\corollaryname}{Corollary}
  \providecommand{\definitionname}{Definition}
  \providecommand{\propositionname}{Proposition}
\providecommand{\theoremname}{Theorem}
\begin{document}

\title{Zeta-equivalent digraphs: Simultaneous cospectrality}

\author{Peter Herbrich}
\begin{abstract}
We introduce a zeta function of digraphs that determines, and is determined~by,
the spectra of all linear combinations of the adjacency matrix, its
transpose, the out-degree matrix, and the in-degree matrix. In particular,
zeta-equivalence of graphs encompasses simultaneous cospectrality
with respect to the adjacency, the Laplacian, the signless Laplacian,
and the normalized Laplacian matrix. In addition, we express zeta-equivalence
in terms of Markov chains and in terms of invasions where each edge
is replaced by a fixed digraph. We finish with a method for constructing
zeta-equivalent digraphs.
\end{abstract}

\subjclass[2000]{05C50 (primary), 05C20, 05C25, 15A15 (secondary)}

\keywords{Cospectrality; Digraphs; Laplacian matrix; Markov chains; Zeta function}

\thanks{I am indebted to Peter Doyle for initiating this research and for
his indispensable contributions, and to Everett Sullivan for writing
parts of the code used to search for zeta-equivalent digraphs.}

\address{Dartmouth College, Hanover, New Hampshire, USA}

\email{peter.herbrich@dartmouth.edu}

\maketitle
\noindent 

\begin{minipage}[t]{1\columnwidth}%
\global\long\def\identityMatrix#1{I_{#1}}

\global\long\def\degreeSequence#1{\mathfrak{D}_{#1}}

\global\long\def\degreeMatrix#1{D_{#1}}

\global\long\def\adjacencyMatrix#1{A_{#1}}

\global\long\def\indegreeMatrix#1{D_{#1}^{\mathrm{in}}}

\global\long\def\outdegreeMatrix#1{D_{#1}^{\mathrm{out}}}

\global\long\def\variablesCBC{\boldsymbol{t}}

\global\long\def\variableCBC#1#2{t_{#1#2}}

\global\long\def\variablesL{\boldsymbol{u}}

\global\long\def\variableL#1{u_{#1}}

\global\long\def\backward{\downarrow}

\global\long\def\forward{\uparrow}

\global\long\def\rowVector#1{A_{#1}}

\global\long\def\adjointMatrix#1{\mathrm{adj}(#1)}

\global\long\def\nrEdges#1{m_{#1}}

\global\long\def\nrVertices#1{n_{#1}}

\global\long\def\graphVersion#1{\overset{\rightleftharpoons}{#1}}

\global\long\def\nrUndirectedEdges#1{\graphVersion m_{#1}}

\global\long\def\characteristicPolynomial#1{\chi_{#1}}

\global\long\def\characteristicPolynomialOfGraph#1{\graphVersion{\chi}_{#1}}

\global\long\def\generalizedCharacteristicPolynomial#1{\eta_{#1}}

\global\long\def\generalizedCharacteristicPolynomialOfGraph#1{\graphVersion{\eta}_{#1}}

\global\long\def\markovChainCharacteristicPolynomialOfGraphs#1{\graphVersion{\mu}_{#1}}

\global\long\def\graph{G}

\global\long\def\substituent{S}

\global\long\def\core{C}

\global\long\def\vertex{v}

\global\long\def\substituentTail{t}

\global\long\def\substituentHead{h}

\global\long\def\invadedDiraph#1#2{#1\succ#2}

\global\long\def\mate#1{#1'}

\global\long\def\weightMatrix#1{W_{#1}}

\global\long\def\weight#1{w(#1)}

\global\long\def\symPart#1{#1^{\mathrm{sym}}}

\global\long\def\edge{e}

\global\long\def\tail#1{t(#1)}

\global\long\def\head#1{h(#1)}

\global\long\def\length#1#2{|#1|_{#2}}

\global\long\def\path{\gamma}

\global\long\def\runsInto#1#2{#1\rightarrow#2}

\global\long\def\runsNotInto#1#2{#1\not\rightarrow#2}

\global\long\def\inverse#1#2{#1\rightleftarrows#2}

\global\long\def\cyclicBumpCount#1#2#3{b_{#1#2}(#3)}

\global\long\def\sign#1{\mathrm{dir}(#1)}

\global\long\def\bidirectional#1{#1_{\updownarrow}}

\global\long\def\tailIncidenceMatrix{T}

\global\long\def\headIncidenceMatrix{H}

\global\long\def\edgeNonBumpMatrix#1{C_{#1}}

\global\long\def\edgeBumpMatrix#1#2{B_{#1#2}}

\global\long\def\lyndonWord{\omega}

\global\long\def\lyndonWords{L}

\global\long\def\nrLoops#1{l_{#1}}

\global\long\def\nrPairsOfReverseEdges#1{r_{#1}}

\global\long\def\variableProduct#1{s_{#1}}

\global\long\def\invadedGraph#1#2{#1\succeq#2}

\global\long\def\allOnesMatrix#1{J_{#1}}

\global\long\def\complementDiGraph#1{#1^{\mathrm{c}}}

\global\long\def\subgraphIsomorphism{\Phi}

\global\long\def\vertices#1{V_{#1}}

\global\long\def\vParts#1{V_{#1}}

\global\long\def\wParts#1{W_{#1}}

\global\long\def\xPart{X}

\global\long\def\wertex{w}

\global\long\def\xertex{x}

\global\long\def\nrVParts{p}

\global\long\def\nrWParts{q}

\global\long\def\nrEdgeFromXtoY#1#2{m_{#1\to#2}}

\global\long\def\block#1#2{B_{#1#2}}

\global\long\def\unlinked#1#2{#1\not\to#2}

\global\long\def\halflinked#1#2{#1\arrowovernarrow#2}
\global\long\def\fullylinked#1#2{#1\arrowoverarrow#2}

\global\long\def\difference#1{\delta_{#1}}

\global\long\def\outDegree#1{d_{#1}^{\mathrm{out}}}

\global\long\def\vvPrimeParts#1{\vParts{#1}^{*}}

\global\long\def\diagonalMatrix#1{\mathrm{diag}\left(#1\right)}
\end{minipage}

\section{Introduction\label{sec:Introduction}}

\noindent We study digraphs which either have parallel edges or weighted
edges. If $\graph$ is a digraph of one of these two types, we denote
its number of vertices by~$\nrVertices{\graph}$, its number of edges
by~$\nrEdges{\graph}$, its adjacency matrix by~$\adjacencyMatrix{\graph}\in\mathbb{Z}_{\geq0}^{\nrVertices{\graph}\times\nrVertices{\graph}}$,
its out-degree matrix by $\outdegreeMatrix{\graph}$, and its in-degree
matrix by $\indegreeMatrix{\graph}$, the latter two of which have
the row sums of $\adjacencyMatrix{\graph}$ and $\adjacencyMatrix{\graph}^{T}$
on their diagonals. We call $\graph$ an undirected digraph, or simply
a graph, if $\adjacencyMatrix{\graph}^{T}=\adjacencyMatrix{\graph}$
so that $\degreeMatrix{\graph}=\outdegreeMatrix{\graph}=\indegreeMatrix{\graph}$.
\begin{defn}
The generalized characteristic polynomial of a digraph $\graph$ reads
\[
\generalizedCharacteristicPolynomial{\graph}(x,\variableCBC{\forward}{},\variableCBC{\backward}{},\variableL{\forward},\variableL{\backward})=\det(x\identityMatrix{\nrVertices{\graph}}+\variableCBC{\forward}{}\outdegreeMatrix{\graph}+\variableCBC{\backward}{}\indegreeMatrix{\graph}+\variableL{\forward}\adjacencyMatrix{\graph}+\variableL{\backward}\adjacencyMatrix{\graph}^{T}).
\]
If $\graph$ is a graph, then
\[
\generalizedCharacteristicPolynomialOfGraph{\graph}(x,\variableCBC{}{},\variableL{})=\generalizedCharacteristicPolynomial{\graph}(x,\variableCBC{}{},0,\variableL{},0)=\det(x\identityMatrix{\nrVertices{\graph}}+\variableCBC{}{}\degreeMatrix{\graph}+\variableL{}\adjacencyMatrix{\graph}).
\]

\end{defn}
The definition of $\generalizedCharacteristicPolynomialOfGraph{\graph}$
goes back to~\cite{CvetkovicDoobSachs1980}. Note that $\generalizedCharacteristicPolynomial{\graph}$
and $\generalizedCharacteristicPolynomialOfGraph{\graph}$ are homogeneous
and generalize the characteristic polynomial $\characteristicPolynomial{\graph}(x)=\characteristicPolynomialOfGraph{\graph}(x)=\generalizedCharacteristicPolynomialOfGraph{\graph}(x,0,-1)$.
If $\graph$ is a graph, then the single-variable polynomials $\generalizedCharacteristicPolynomialOfGraph{\graph}(x,-1,1)$,
$\generalizedCharacteristicPolynomialOfGraph{\graph}(x,-1,-1)$, and
$\generalizedCharacteristicPolynomialOfGraph{\graph}(0,t,-1)$ determine
the spectra of the Laplacian matrix $\degreeMatrix{\graph}-\adjacencyMatrix{\graph}$,
the signless Laplacian matrix $\degreeMatrix{\graph}+\adjacencyMatrix{\graph}$,
and the normalized Laplacian matrix $\degreeMatrix{\graph}^{-1/2}(\degreeMatrix{\graph}-\adjacencyMatrix{\graph})\degreeMatrix{\graph}^{-1/2}$,
respectively, where $\degreeMatrix{\graph}$ is assumed to be invertible
in the last case.

First, let $\graph$ be a digraph with weighted edges, none of which
are parallel to each other. In Section~\ref{sec:Zeta_functions},
we introduce an auxiliary digraph $\bidirectional{\graph}$ that allows
to study closed walks on the vertices of $\graph$, where at each
step either an outgoing or an incoming edge is traversed. Every such
walk $\path$ inherits a weight $\weight{\path}\in\mathbb{C}\backslash\{0\}$
from the edges it involves. We let $\length{\path}{\forward}$ and
$\length{\path}{\backward}$ be its directional lengths which are
the numbers of outgoing and incoming edges traversed, respectively.
Moreover, we let $\cyclicBumpCount{\forward}{\forward}{\path}$ be
one of its cyclic bump counts given as the number of times two outgoing
edges with opposite directions are used consecutively, where the last
and first edge are considered consecutive. The cyclic bump counts
$\cyclicBumpCount{\backward}{\backward}{\path}$, $\cyclicBumpCount{\forward}{\backward}{\path}$,
and $\cyclicBumpCount{\backward}{\forward}{\path}$ are defined similarly.
The directional lengths and cyclic bump counts of a closed walk $\path$
descend to its equivalence class $[\path]$ under cyclic permutation
of edges, called its cycle. Lastly, a cycle is called primitive if
none of its representatives is a multiple of a strictly shorter closed
walk.
\begin{defn}
\label{def:Zeta_function}Let $\mathcal{P}_{\graph}$ and $\mathcal{P}_{\bidirectional{\graph}}$
denote the set of primitive cycles in $\graph$ and $\bidirectional{\graph}$,
respectively. The zeta function of $\graph$ with arguments $\variablesCBC=(\variableCBC{\forward}{\forward},\variableCBC{\backward}{\backward},\variableCBC{\forward}{\backward},\variableCBC{\backward}{\forward})$
and $\variablesL=(\variableL{\forward},\variableL{\backward})$ is
given for sufficiently small $|\variablesCBC|$ and $|\variablesL|$
by 
\[
\zeta_{\graph}(\variablesCBC,\variablesL)=\prod_{[\path]\in\mathcal{P}_{\bidirectional{\graph}}}\frac{1}{1-\weight{\path}\variableCBC{\forward}{\forward}^{\cyclicBumpCount{\forward}{\forward}{\path}}\variableCBC{\backward}{\backward}^{\cyclicBumpCount{\backward}{\backward}{\path}}\variableCBC{\forward}{\backward}^{\cyclicBumpCount{\forward}{\backward}{\path}}\variableCBC{\backward}{\forward}^{\cyclicBumpCount{\backward}{\forward}{\path}}\variableL{\forward}^{\length{\path}{\forward}}\variableL{\backward}^{\length{\path}{\backward}}}.
\]
The reversing zeta function is obtained by setting $\variableCBC{\forward}{\forward}=\variableCBC{\downarrow}{\downarrow}=1$,
namely, 
\[
\zeta_{\graph}^{\forward\backward}(\variableCBC{\forward}{},\variableCBC{\backward}{},\variableL{\forward},\variableL{\backward})=\zeta_{\graph}(1,1,\variableCBC{\forward}{},\variableCBC{\backward}{},\variableL{\forward},\variableL{\backward}).
\]
The outgoing zeta function is obtained by setting $\variableL{\backward}=0$,
namely,
\[
\zeta_{\graph}^{\forward\forward}(\variableCBC{}{},\variableL{})=\zeta_{\graph}(\variableCBC{}{},0,0,0,\variableL{},0)=\zeta_{\graph}(\variableCBC{}{},1,1,1,\variableL{},0)=\prod_{[\path]\in\mathcal{P}_{\graph}}\frac{1}{1-\weight{\path}\variableCBC{}{}^{\cyclicBumpCount{\forward}{\forward}{\path}}\variableL{}^{\length{\path}{\forward}}}.
\]

\end{defn}
Among other things, we show that $\zeta_{\graph}^{-1}$ is a polynomial.
Choe~et~al.~\cite{ChoeKwakParkSato2007} considered $\zeta_{\graph}^{\forward\forward}$
for weighted digraphs without loops. As a by-product, we provide an
alternative derivation of the corresponding polynomial expression
of $(\zeta_{\graph}^{\forward\forward})^{-1}$. If $\graph$ is an
unweighted graph, then $\zeta_{\graph}^{\forward\forward}$ reduces
to the zeta function introduced by Bartholdi~\cite{Bartholdi1999},
which in turn generalizes the mother of all graph zeta functions,
the Ihara-Selberg zeta function~\cite{Ihara1966} 
\[
\zeta_{\mathrm{IS}}(\variableL{})=\zeta_{\graph}^{\forward\forward}(0,\variableL{})=\zeta_{\graph}(0,0,0,0,\variableL{},0).
\]
Since cycles with bumps contribute a negligible factor of $1$ to
the product expression of $\zeta_{\mathrm{IS}}$, the original definition
of $\zeta_{\mathrm{IS}}$ involved reduced cycles, none of whose representatives
has a bump. We point out the pioneering works of Ihara~\cite{Ihara1966}
and Bass~\cite{Bass1992}, who found the polynomial expression of
$\zeta_{\mathrm{IS}}^{-1}$ for regular and non-regular graphs, respectively.

Now, let $\graph$ be an unweighted digraph, possibly with parallel
edges. In Section~\ref{sec:Invasions}, we introduce invaded digraphs
$\invadedDiraph{\substituent}{\graph}$ that arise from $\graph$
by replacing each of its edges by an invader $\substituent$ which
is a digraph with two distinguished vertices $\substituentTail$ and
$\substituentHead$. If $\graph$ is a graph and $\substituent$ is
symmetric meaning it has an automorphism that interchanges $\substituentTail$
and $\substituentHead$, then the symmetrically invaded digraph $\invadedGraph{\substituent}{\graph}$
arises by replacing each undirected edge of $\graph$ by $\substituent$.

Finally, let $\graph$ be an unweighted graph, possibly with parallel
edges. For every pair of positive integers $a,b\in\mathbb{Z}_{>0}$,
let $\graph_{a,b}$ be the digraph obtained by attaching $a$ parallel
loops to every vertex of $\graph$, adding a vertex $\vertex$ with
a single loop to $\graph$, and adding $b$ parallel directed edges
from every vertex of $\graph$ to $\vertex$. We consider the Markov
process on the vertices of $\graph_{a,b}$, where at each step one
of the outgoing edges is chosen with uniform probability. The spectrum
of the corresponding transition matrix is determined by the $a$-lazy
$b$-deadly Markov chain function defined as follows.
\begin{defn}
The $a$-lazy $b$-deadly Markov chain function is given for $a,b>0$
by
\[
\markovChainCharacteristicPolynomialOfGraphs{\graph}(x,a,b)=\det(x\identityMatrix{\nrVertices{\graph}}+((a+b)\identityMatrix{\nrVertices{\graph}}+\degreeMatrix{\graph})^{-1}(\adjacencyMatrix{\graph}+a\identityMatrix{\nrVertices{\graph}})).
\]

\end{defn}
The aforementioned definitions apply to simple unweighted digraphs
and graphs, for which they lead to the same equivalence relation as
stated by the following main theorems. Their proofs are given in Sections~\ref{sec:Zeta_functions}
to~\ref{sec:Markov_chains}, each of which contains results of independent
interest.
\begin{thm}
\label{thm:Zeta_equivalent_digraphs}If $\graph$ and $\mate{\graph}$
are simple digraphs with $\nrVertices{\graph}=\nrVertices{\mate{\graph}}$,
then the following are equivalent:\negthinspace{}\negthinspace{}
\begin{enumerate}
\item $\zeta_{\graph}^{\forward\backward}=\zeta_{\mate{\graph}}^{\forward\backward}$,
\item $\generalizedCharacteristicPolynomial{\graph}=\generalizedCharacteristicPolynomial{\mate{\graph}}$,
\item $\generalizedCharacteristicPolynomial{\invadedDiraph{\substituent}{\graph}}=\generalizedCharacteristicPolynomial{\invadedDiraph{\substituent}{\mate{\graph}}}$
for every invader $\substituent$,
\item $\characteristicPolynomial{\invadedDiraph{\substituent}{\graph}}=\characteristicPolynomial{\invadedDiraph{\substituent}{\mate{\graph}}}$
for every invader $\substituent$,
\item $\characteristicPolynomial{\invadedDiraph{\substituent}{\graph}}=\characteristicPolynomial{\invadedDiraph{\substituent}{\mate{\graph}}}$
for every invader $\substituent$ with $\adjacencyMatrix{\substituent}\in\mathbb{Z}_{>0}^{2\times2}$.
\end{enumerate}
\end{thm}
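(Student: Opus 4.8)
The plan is to prove the cycle of implications
$(1)\Rightarrow(2)\Rightarrow(3)\Rightarrow(4)\Rightarrow(5)\Rightarrow(1)$,
so that each statement is shown to follow from its predecessor and the weakest
statement~$(5)$ closes the loop back to the strongest.

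The main work is the equivalence $(1)\Leftrightarrow(2)$, which I expect to be
handled by the determinant formula for the reversing zeta function that must be
established in Section~\ref{sec:Zeta_functions}. The reversing zeta function
$\zeta_{\graph}^{\forward\backward}(\variableCBC{\forward}{},\variableCBC{\backward}{},\variableL{\forward},\variableL{\backward})$
depends only on the matrices
$\outdegreeMatrix{\graph},\indegreeMatrix{\graph},\adjacencyMatrix{\graph},\adjacencyMatrix{\graph}^{T}$
through the homogeneous pencil that defines
$\generalizedCharacteristicPolynomial{\graph}$; concretely, I would invoke a
Bass--Ihara-type identity expressing $(\zeta_{\graph}^{\forward\backward})^{-1}$
as a polynomial in the four variables whose coefficients are built from
$\generalizedCharacteristicPolynomial{\graph}$ evaluated along a curve, and
conversely recover $\generalizedCharacteristicPolynomial{\graph}$ from the
low-order part of this polynomial. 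Since both objects are determined by the
same data, equality of one forces equality of the other, giving
$(1)\Leftrightarrow(2)$ once the determinant expression is in hand.

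For $(2)\Rightarrow(3)$ the key is the invasion machinery of
Section~\ref{sec:Invasions}: replacing every edge of $\graph$ by a fixed
invader $\substituent$ produces $\invadedDiraph{\substituent}{\graph}$ whose
generalized characteristic polynomial I would express as a specialization of
$\generalizedCharacteristicPolynomial{\graph}$ with the five formal arguments
replaced by quantities determined solely by $\substituent$ (its adjacency,
degree, and transpose data at the distinguished vertices
$\substituentTail,\substituentHead$). Because
$\generalizedCharacteristicPolynomial{\invadedDiraph{\substituent}{\graph}}$
factors through $\generalizedCharacteristicPolynomial{\graph}$ in this way,
identical $\generalizedCharacteristicPolynomial{\graph}$ yields identical
$\generalizedCharacteristicPolynomial{\invadedDiraph{\substituent}{\graph}}$ for
every $\substituent$. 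The implication $(3)\Rightarrow(4)$ is immediate from the
specialization
$\characteristicPolynomial{\graph}=\generalizedCharacteristicPolynomial{\graph}(x,0,0,-1,0)$,
and $(4)\Rightarrow(5)$ is trivial since the invaders with
$\adjacencyMatrix{\substituent}\in\mathbb{Z}_{>0}^{2\times2}$ form a subclass of
all invaders.

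The hard part, and the step that genuinely closes the argument, is
$(5)\Rightarrow(1)$: from the single-variable characteristic polynomials of
$\invadedDiraph{\substituent}{\graph}$ for the restricted two-parameter family
of invaders $\substituent$ with full $2\times2$ positive adjacency matrix, I
must recover the full four-variable reversing zeta function. My plan is to show
that as $\substituent$ ranges over this family, the quantities substituted into
$\generalizedCharacteristicPolynomial{\graph}$ (equivalently into
$\zeta_{\graph}^{\forward\backward}$) trace out a Zariski-dense set of parameter
values, so that matching $\characteristicPolynomial{\invadedDiraph{\substituent}{\graph}}$
and $\characteristicPolynomial{\invadedDiraph{\substituent}{\mate{\graph}}}$ for
all such $\substituent$ forces the underlying rational functions to coincide
identically. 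The obstacle is bookkeeping: I need the map from invader data to
the substituted arguments to be genuinely four-dimensional in its image, which
requires checking that a $2\times2$ positive integer matrix, together with the
induced out/in-degree contributions at $\substituentTail$ and
$\substituentHead$, supplies enough independent parameters to separate all four
of $\variableCBC{\forward}{},\variableCBC{\backward}{},\variableL{\forward},\variableL{\backward}$,
after which a polynomial-identity argument over the dense image finishes the
proof.
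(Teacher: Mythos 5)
Your skeleton is essentially the paper's: the equivalence of (1) and (2) rests on the polynomial expression~(\ref{eq:Zeta_with_fb_or_bf_bump_count}) for $\zeta_{\graph}^{-1}(1,1,\variableCBC{\forward}{\backward},\variableCBC{\backward}{\forward},\variableL{\forward},\variableL{\backward})$ established in Section~\ref{sec:Zeta_functions}, and the chain among (2)--(5) is exactly Corollary~\ref{cor:Zeta_equivalence_and_invasion}. Your closing step is also the paper's: for $\adjacencyMatrix{\substituent}\in\mathbb{Z}_{>0}^{2\times2}$ one has $\characteristicPolynomial{\invadedDiraph{\substituent}{\graph}}(x)=\det(x\identityMatrix{\nrVertices{\graph}}-[\adjacencyMatrix{\substituent}]_{\substituentTail\substituentTail}\outdegreeMatrix{\graph}-[\adjacencyMatrix{\substituent}]_{\substituentHead\substituentHead}\indegreeMatrix{\graph}-[\adjacencyMatrix{\substituent}]_{\substituentTail\substituentHead}\adjacencyMatrix{\graph}-[\adjacencyMatrix{\substituent}]_{\substituentHead\substituentTail}\adjacencyMatrix{\graph}^{T})$, so the four entries of $\adjacencyMatrix{\substituent}$ literally \emph{are} the four pencil coefficients; the ``four-dimensionality'' you worry about is automatic, and Zariski density of $\mathbb{Z}_{>0}^{4}$ finishes (5)$\Rightarrow$(2), whence (5)$\Rightarrow$(1) via the zeta identity. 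The one point where you genuinely diverge is (2)$\Rightarrow$(3): you propose to show directly that $\generalizedCharacteristicPolynomial{\invadedDiraph{\substituent}{\graph}}$ is determined by $\generalizedCharacteristicPolynomial{\graph}$ and $\substituent$. This is workable---the degree matrices of $\invadedDiraph{\substituent}{\graph}$ restricted to native vertices are again combinations of $\outdegreeMatrix{\graph}$ and $\indegreeMatrix{\graph}$, so the Schur-complement computation of Proposition~\ref{prop:Char_poly_of_invaded_graph} extends to the full five-variable pencil---but it is heavier bookkeeping, and note that what you get is not a substitution into the arguments of $\generalizedCharacteristicPolynomial{\graph}$ but a prefactor (a power of a polynomial in $x$ depending on $\nrEdges{\graph}$, which must itself be read off from $\generalizedCharacteristicPolynomial{\graph}(x,\variableCBC{}{},0,0,0)$) times $\generalizedCharacteristicPolynomial{\graph}$ evaluated along a curve. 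The paper avoids this entirely: it proves only the single-variable Proposition~\ref{prop:Char_poly_of_invaded_graph}, which gives (2)$\Rightarrow$(4), and then upgrades (4) to (3) via the composition isomorphism $\invadedDiraph{\mate{\substituent}}{(\invadedDiraph{\substituent}{\graph})}\cong\invadedDiraph{\invadedDiraph{(\mate{\substituent}}{\substituent)}}{\graph}$ together with the already-proved $2\times2$ case applied to the invaded digraphs. That trick buys statement (3) with no extra determinant computation, at the cost of a slightly less linear logical structure than your single cycle.
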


\begin{thm}
\label{thm:Zeta_equivalent_graphs}If $\graph$ and $\mate{\graph}$
are simple graphs with $\nrVertices{\graph}=\nrVertices{\mate{\graph}}$,
then the following are equivalent:
\begin{enumerate}
\item $\zeta_{\graph}^{\forward\forward}=\zeta_{\mate{\graph}}^{\forward\forward}$,
\item $\markovChainCharacteristicPolynomialOfGraphs{\graph}=\markovChainCharacteristicPolynomialOfGraphs{\mate{\graph}}$,
\item $\markovChainCharacteristicPolynomialOfGraphs{\graph|b=c}=\markovChainCharacteristicPolynomialOfGraphs{\mate{\graph}|b=c}$
for some $c\in\mathbb{C}$.
\item $\generalizedCharacteristicPolynomialOfGraph{\graph}=\generalizedCharacteristicPolynomialOfGraph{\mate{\graph}}$,\label{enu:generalized_characteristic_polynomials_equal}
\item $\generalizedCharacteristicPolynomialOfGraph{\invadedGraph{\substituent}{\graph}}=\generalizedCharacteristicPolynomialOfGraph{\invadedGraph{\substituent}{\mate{\graph}}}$
for every symmetric invader $\substituent$,
\item $\characteristicPolynomialOfGraph{\invadedGraph{\substituent}{\graph}}=\characteristicPolynomialOfGraph{\invadedGraph{\substituent}{\mate{\graph}}}$
for every symmetric invader $\substituent$,
\item $\characteristicPolynomialOfGraph{\invadedGraph{\substituent}{\graph}}=\characteristicPolynomialOfGraph{\invadedGraph{\substituent}{\mate{\graph}}}$
for every undirected path $\substituent$ with endpoints $\substituentTail$
and $\substituentHead$,
\item $\characteristicPolynomialOfGraph{\invadedGraph{\substituent}{\graph}}=\characteristicPolynomialOfGraph{\invadedGraph{\substituent}{\mate{\graph}}}$
for every $\substituent$ with $\adjacencyMatrix{\substituent}=\left(\begin{array}{cc}
k & 1\\
1 & k
\end{array}\right)\in\mathbb{Z}_{>0}^{2\times2}$.
\item $\characteristicPolynomialOfGraph{\invadedGraph{\substituent}{\graph}}=\characteristicPolynomialOfGraph{\invadedGraph{\substituent}{\mate{\graph}}}$
for every $\substituent$ with $\adjacencyMatrix{\substituent}=\left(\begin{array}{cc}
1 & k\\
k & 1
\end{array}\right)\in\mathbb{Z}_{>0}^{2\times2}$.\label{enu:characteristic_polynomials_of_2_by_2_invasions_equal}
\end{enumerate}
\end{thm}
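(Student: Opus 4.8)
The plan is to designate condition~(4), the identity $\generalizedCharacteristicPolynomialOfGraph{\graph}=\generalizedCharacteristicPolynomialOfGraph{\mate{\graph}}$, as the central condition and to prove that each of the other eight is equivalent to it. The descending implications $(4)\Rightarrow(5)\Rightarrow(6)\Rightarrow(7),(8),(9)$ are comparatively soft; the returns to~(4) from the weak conditions~(8) and~(9) rest on a short explicit computation, whereas the return from~(7) is the genuine obstacle. The zeta equivalence $(1)\Leftrightarrow(4)$ and the Markov equivalences $(2),(3)\Leftrightarrow(4)$ are supplied by the polynomial (Bass-type) expression of $\zeta_{\graph}^{\forward\forward}$ from Section~\ref{sec:Zeta_functions} and by the determinant formula for $\markovChainCharacteristicPolynomialOfGraphs{\graph}$ from Section~\ref{sec:Markov_chains}.

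For the descending chain I would read a graph as a symmetric digraph, so that $\generalizedCharacteristicPolynomialOfGraph{\graph}(x,\variableCBC{}{},\variableL{})=\generalizedCharacteristicPolynomial{\graph}(x,\variableCBC{}{},0,\variableL{},0)$ is the specialization of the four-variable $\generalizedCharacteristicPolynomial{\graph}$ recorded in the Definition. The invasion formula of Section~\ref{sec:Invasions} (the symmetric counterpart of the equivalence $(2)\Leftrightarrow(3)$ in Theorem~\ref{thm:Zeta_equivalent_digraphs}) then expresses $\generalizedCharacteristicPolynomialOfGraph{\invadedGraph{\substituent}{\graph}}$ as a function of $\generalizedCharacteristicPolynomialOfGraph{\graph}$ evaluated at arguments governed by $\substituent$, together with invariants of $\substituent$ alone, which yields $(4)\Rightarrow(5)$. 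The step $(5)\Rightarrow(6)$ is the specialization $\characteristicPolynomialOfGraph{H}(x)=\generalizedCharacteristicPolynomialOfGraph{H}(x,0,-1)$ applied to $H=\invadedGraph{\substituent}{\graph}$, and $(6)\Rightarrow(7),(8),(9)$ merely restricts the quantifier, since undirected paths and the two prescribed $2\times2$ adjacency matrices are symmetric invaders.

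To return to~(4) from~(8) and~(9) I would compute the invaded adjacency matrices directly. As both $2\times2$ invaders have no internal vertices, $\invadedGraph{\substituent}{\graph}$ keeps the vertex set of $\graph$, and summing the edge contributions gives $\adjacencyMatrix{\invadedGraph{\substituent}{\graph}}=\adjacencyMatrix{\graph}+k\degreeMatrix{\graph}$ in case~(8) and $\adjacencyMatrix{\invadedGraph{\substituent}{\graph}}=\degreeMatrix{\graph}+k\adjacencyMatrix{\graph}$ in case~(9), so that
\[
\characteristicPolynomialOfGraph{\invadedGraph{\substituent}{\graph}}(x)=\generalizedCharacteristicPolynomialOfGraph{\graph}(x,-k,-1)\quad\text{resp.}\quad\characteristicPolynomialOfGraph{\invadedGraph{\substituent}{\graph}}(x)=\generalizedCharacteristicPolynomialOfGraph{\graph}(x,-1,-k).
\]
Thus~(8) pins down the bivariate polynomial $\generalizedCharacteristicPolynomialOfGraph{\graph}(x,\variableCBC{}{},-1)$ at the infinitely many values $\variableCBC{}{}=-1,-2,\dots$, forcing $\generalizedCharacteristicPolynomialOfGraph{\graph}(x,\variableCBC{}{},-1)=\generalizedCharacteristicPolynomialOfGraph{\mate{\graph}}(x,\variableCBC{}{},-1)$; since $\generalizedCharacteristicPolynomialOfGraph{\graph}$ is homogeneous of degree $\nrVertices{\graph}$, this determines it everywhere and gives~(4), and case~(9) is identical with the two arguments exchanged. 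The explicit identity $\markovChainCharacteristicPolynomialOfGraphs{\graph}(x,a,b)=\generalizedCharacteristicPolynomialOfGraph{\graph}(x(a+b)+a,x,1)/\generalizedCharacteristicPolynomialOfGraph{\graph}(a+b,1,0)$ shows $(4)\Rightarrow(2)$ at once, while $(2)\Rightarrow(3)$ is trivial and $(3)\Rightarrow(4)$ follows by reading the degree sequence off the poles in $a$ (equivalently, the denominator) of the fixed-$b$ Markov function and then inverting the birational substitution $(x,a)\mapsto(x(a+b)+a,x)$.

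The main obstacle is the return $(7)\Rightarrow(4)$. Invading by a path of length $\ell$ subdivides every edge into $\ell-1$ internal vertices; eliminating these by a Schur complement collapses each chain to an effective coupling governed by Chebyshev polynomials of the second kind $U_j$, yielding
\[
\characteristicPolynomialOfGraph{\invadedGraph{\substituent}{\graph}}(x)=U_{\ell-1}(x/2)^{\,\nrEdges{\graph}-\nrVertices{\graph}}\;\generalizedCharacteristicPolynomialOfGraph{\graph}\bigl(xU_{\ell-1}(x/2),\,-U_{\ell-2}(x/2),\,-1\bigr).
\]
Hence each length $\ell$ reveals $\generalizedCharacteristicPolynomialOfGraph{\graph}(\cdot,\cdot,-1)$ along the algebraic curve parametrized by $x\mapsto(xU_{\ell-1}(x/2),-U_{\ell-2}(x/2))$. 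I would then finish by a Zariski-density argument: were $\generalizedCharacteristicPolynomialOfGraph{\graph}(\cdot,\cdot,-1)-\generalizedCharacteristicPolynomialOfGraph{\mate{\graph}}(\cdot,\cdot,-1)$ nonzero, its vanishing locus would be a fixed curve containing all of these evaluation curves, which is impossible because the latter are distinct and of unboundedly growing degree in $x$. The delicate points I expect to cost the most effort are verifying the exact Chebyshev reduction, including the corner entries of the inverse tridiagonal block, and confirming that the evaluation curves are not eventually contained in a common component, so that all but finitely many $\ell$ genuinely add information.
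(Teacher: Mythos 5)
Your proposal is correct and takes essentially the same route as the paper: the Bass-type polynomial expression of $(\zeta_{\graph}^{\forward\forward})^{-1}$ gives (1)$\Leftrightarrow$(4), the invasion determinant formulas (the Chebyshev formula for path invaders, the explicit $2\times2$ computation, and the composition-of-invaders trick) give the equivalence of (4) through (9), and the identity relating $\markovChainCharacteristicPolynomialOfGraphs{\graph}$ to $\generalizedCharacteristicPolynomialOfGraph{\graph}$ together with recovery of the degree sequence gives (2),(3)$\Leftrightarrow$(4). The one step the paper leaves implicit and you make explicit is (7)$\Rightarrow$(4), where your Zariski-density argument over the infinitely many distinct Chebyshev evaluation curves is exactly the missing detail (note only the minor slips that the exponent of the prefactor should involve the undirected edge count $\nrUndirectedEdges{\graph}$, whose equality follows from the trivial one-edge path, and that the paper reads the degree sequence off the trace coefficient rather than the poles of the full determinant).
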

\begin{defn}
Two simple digraphs (or graphs) $\graph$ and $\mate{\graph}$ are
called zeta-equivalent if they satisfy any of the equivalent conditions
in Theorem~\ref{thm:Zeta_equivalent_digraphs} (or Theorem~\ref{thm:Zeta_equivalent_graphs}).
\end{defn}
Durfee and Martin~\cite{DurfeeMartin2014} enumerated all zeta-equivalent
graphs with up to 11 vertices. For regular graphs, zeta-equivalence
coincides with cospectrality with respect to the adjacency matrix,
Laplacian matrix, or signless Laplacian matrix, respectively. In contrast,
a non-regular graph $\graph$ may be uniquely determined by $\generalizedCharacteristicPolynomialOfGraph{\graph}$
as studied in~\cite{WangLiLuXu2011,DurfeeMartin2014}, but not by
some of the single-variable polynomials $\generalizedCharacteristicPolynomialOfGraph{\graph}(x,0,-1)$,
$\generalizedCharacteristicPolynomialOfGraph{\graph}(x,-1,1)$, and
$\generalizedCharacteristicPolynomialOfGraph{\graph}(x,-1,-1)$. It
is known that almost all trees are zeta-equivalent to some other tree~\cite{Osborne2013},
yet if $\graph$ and $\mate{\graph}$ are non-isomorphic trees, then
there exists a two-variable real polynomial $p$ such that $\det(x\identityMatrix{\nrVertices{\graph}}+p(\adjacencyMatrix{\graph},\degreeMatrix{\graph}))\neq\det(x\identityMatrix{\nrVertices{\mate{\graph}}}+p(\adjacencyMatrix{\mate{\graph}},\degreeMatrix{\mate{\graph}}))$~\cite{McKay1977}.
The proof of the former statement is largely the same as in~\cite{Schwenk1973},
which settles the case of adjacency cospectral trees.

Figure~\ref{fig:Smallest_zeta_equivalent_weekly_connected_digraphs}
shows the smallest zeta-equivalent (weakly) connected simple (di)graphs.
The graphs in Figure~\ref{fig:Zeta_equivalent_complementary_graphs}
arise from a method which we introduce in Section~\ref{sec:Digraph-switchings}.
The digraphs in Figure~\ref{fig:Zeta_equivalent_digraphs} are zeta-equivalent
since their generalized Laplacian matrices $\variableCBC{\forward}{}\outdegreeMatrix{\graph}+\variableCBC{\backward}{}\indegreeMatrix{\graph}+\variableL{\forward}\adjacencyMatrix{\graph}+\variableL{\backward}\adjacencyMatrix{\graph}^{T}$
are conjugated by
\[
\left(\begin{array}{ccccc}
\variableL{\forward}^{2}-\variableCBC{\backward}{}\variableL{\backward} & \variableL{\forward}\variableL{\backward} & 0 & 0 & 0\\
\variableL{\backward}^{2} & \variableL{\forward}^{2}+\variableCBC{\backward}{}\variableL{\backward} & \variableL{\backward}^{2} & \variableL{\forward}\variableL{\backward} & 0\\
0 & \variableL{\backward}^{2} & \variableL{\forward}\variableL{\backward} & \variableL{\forward}^{2} & 0\\
0 & \variableL{\forward}\variableL{\backward} & \variableL{\forward}^{2} & \variableL{\backward}^{2}+\variableCBC{\forward}{}\variableL{\forward} & \variableL{\forward}^{2}\\
0 & 0 & 0 & \variableL{\forward}\variableL{\backward} & \variableL{\backward}^{2}-\variableL{\forward}\variableCBC{\forward}{}
\end{array}\right),
\]
which has determinant $(\variableL{\forward}^{5}-2\variableL{\forward}^{2}\variableL{\backward}^{3}-\variableCBC{\backward}{}^{2}\variableL{\forward}\variableL{\backward}^{2}+\variableCBC{\backward}{}\variableL{\backward}^{4})(\variableL{\backward}^{5}-2\variableL{\backward}^{2}\variableL{\forward}^{3}-\variableCBC{\forward}{}^{2}\variableL{\backward}\variableL{\forward}^{2}+\variableCBC{\forward}{}\variableL{\forward}^{4})$.
It is worth mentioning that the adjacency matrices of these digraphs
have different Jordan normal forms, which is why the conjugating matrix
has to become singular at $\variableCBC{\forward}{}=\variableCBC{\backward}{}=\variableL{\backward}=0$.

\noindent \newcommand{\smallestDigraphs}[1]{
 \def\flip{#1}
 \tikzstyle{every node}=
  [circle, draw, fill=black!50, inner sep=1pt, minimum width=3pt]

\begin{tikzpicture}[scale=0.35,>=triangle 45,thick,decoration={
   markings,mark=at position 0.6 with {\arrow{>}}}] 
  \draw[postaction={decorate}] ( 0, 0) -- (\flip * -2,-2);
  \draw[postaction={decorate}] (\flip * -2,-2) -- (\flip * 2,-2);
  \draw[postaction={decorate}] (\flip * 2,-2) -- ( 0, 0);
  \draw[postaction={decorate}] (-2, 2) -- (-2,-2);
  \draw[postaction={decorate}] ( 2,-2) -- ( 2, 2);

  \draw (-2, 2) node {};
  \draw (-2, 2) node[left=2pt, draw=none, fill=white] {$1$};
  \draw (-2,-2) node {};
  \draw (-2,-2) node[left=2pt, draw=none, fill=white] {$2$};
  \draw ( 0, 0) node {};
  \draw ( 0, 0) node[above=2pt, draw=none, fill=white] {$3$};
  \draw ( 2,-2) node {};
  \draw ( 2,-2) node[right=2pt, draw=none, fill=white] {$4$};
  \draw ( 2, 2) node {};
  \draw ( 2, 2) node[right=2pt, draw=none, fill=white] {$5$};
 \end{tikzpicture}
}

\noindent \newcommand{\smallestGraphs}[1]{
 \def\flip{#1}
 \tikzstyle{every node}=
  [circle, draw, fill=black!50, inner sep=1pt, minimum width=3pt]

 \begin{tikzpicture}[thick,scale=0.4]
   \draw (-3,-1) node[below left=2pt, draw=none, fill=white] {$5$};
   \draw (-1,-1) node[below left=2pt, draw=none, fill=white] {$1$};
   \draw ( 1,-1) node[below right=2pt, draw=none, fill=white] {$2$};
   \draw ( 3,-1) node[below right=2pt, draw=none, fill=white] {$6$};
   \draw (-3, 1) node[above left=2pt, draw=none, fill=white] {$7$};
   \draw (-1, 1) node[above left=2pt, draw=none, fill=white] {$3$};
   \draw ( 1, 1) node[above right=2pt, draw=none, fill=white] {$4$};
   \draw ( 3, 1) node[above right=2pt, draw=none, fill=white] {$8$};
   \draw ( 0,\flip * 2) node[above, draw=none, fill=white] {$9$};

  \draw ( 0, \flip * 2) -- (-1, \flip * 1);
  \draw ( 0, \flip * 2) -- ( 1, \flip * 1);
  \draw ( 0, \flip * 2) .. controls ( 2.5, \flip * 2) .. ( 3, \flip * 1);
  \draw ( 0, \flip * 2) .. controls (-2.5, \flip * 2) .. (-3, \flip * 1);
  \draw (-3,-1) -- (-1,1);
  \draw (-3, 1) -- (1,-1);
  \draw (-1,-1) -- (3, 1);
  \draw ( 1, 1) -- (3,-1);

  \draw \foreach \x in {-3,-1,...,1}
  { (\x,-1) -- (\x + 2, -1)
    (\x, 1) -- (\x + 2,  1)
    (\x,-1) node {} -- (\x,  1) node {}
  };
  \draw (3,-1) node {} -- (3,1) node {};
  \draw (0, \flip * 2) node {};
  \draw (0, \flip * -2) node[draw=none, fill=white] {};
 \end{tikzpicture}
}

\begin{figure}
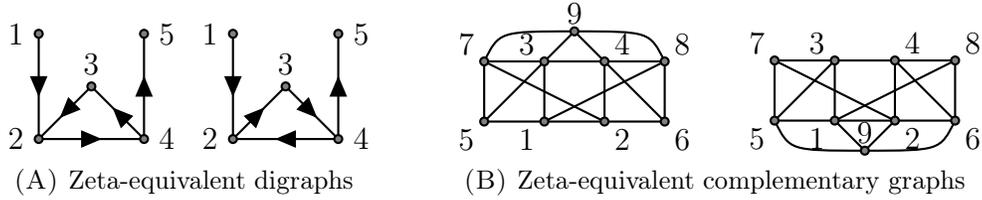

\noindent \begin{centering}
\subfloat[Zeta-equivalent digraphs\label{fig:Zeta_equivalent_digraphs}]{\noindent \begin{centering}
\smallestDigraphs{1}\hspace{-1em}\smallestDigraphs{-1}
\par\end{centering}

}\subfloat[Zeta-equivalent complementary graphs\label{fig:Zeta_equivalent_complementary_graphs}]{\noindent \begin{centering}
\smallestGraphs{1}\smallestGraphs{-1}
\par\end{centering}

}
\par\end{centering}

\caption{Smallest zeta-equivalent (weakly) connected simple (di)graphs. The
right-hand pair was found by Durfee and Martin~\cite{DurfeeMartin2014}.
It is also the smallest pair of cospectral graphs with cospectral
line graphs, cospectral complements, and complements that have cospectral
line graphs~\cite{GodsilMcKay1976}.\label{fig:Smallest_zeta_equivalent_weekly_connected_digraphs}}
\end{figure}

\section{Zeta functions\label{sec:Zeta_functions}}

\noindent Let $\graph$ be a digraph with weighted non-parallel edges.
We let $\weightMatrix{\graph}\in\mathbb{C}^{\nrVertices{\graph}\times\nrVertices{\graph}}$
denote the weighted adjacency matrix of $\graph$ which arises form
$\adjacencyMatrix{\graph}\in\{0,1\}^{\nrVertices{\graph}\times\nrVertices{\graph}}$
by replacing each $1$ by the weight $\weight{\edge}\in\mathbb{C}\backslash\{0\}$
of the corresponding directed edge $\edge$ of $\graph$. Moreover,
we let~$\symPart{\adjacencyMatrix{\graph}}\in\{0,1\}^{\nrVertices{\graph}\times\nrVertices{\graph}}$
be the symmetric part of $\adjacencyMatrix{\graph}$ with entries
$[\symPart{\adjacencyMatrix{\graph}}]_{ij}=[\adjacencyMatrix{\graph}]_{ij}[\adjacencyMatrix{\graph}]_{ji}$,
$\symPart{\weightMatrix{\graph}}\in\mathbb{C}^{\nrVertices{\graph}\times\nrVertices{\graph}}$
be the corresponding part of $\weightMatrix{\graph}$ with entries
$[\symPart{\weightMatrix{\graph}}]_{ij}=[\symPart{\adjacencyMatrix{\graph}}]_{ij}[\weightMatrix{\graph}]_{ij}$,
and $\symPart{\degreeMatrix{\graph}}$ be the diagonal matrix that
has the row sums of $\symPart{\adjacencyMatrix{\graph}}$ on its diagonal.

If $\edge=(\vertex,\vertex')$ is an edge of $\graph$, we call $\tail{\edge}=\vertex$
and $\head{\edge}=\vertex'$ the tail and head of $\edge$, respectively.
We write $\runsInto{\edge}{\edge'}$ whenever $\head{\edge}=\tail{\edge'}$.
A path $\path$ in $\graph$ of length $\length{\path}{}\in\mathbb{Z}_{>0}$
is a  sequence of edges $\path=(\edge_{1},\edge_{2},\ldots,\edge_{\length{\path}{}})$
with $\runsInto{\edge_{i}}{\edge_{i+1}}$ for all $i\in\{1,2,\ldots,\length{\path}{}-1\}$.
We call $\weight{\path}=\weight{\edge_{1}}\weight{\edge_{2}}\cdots\weight{\edge_{\length{\path}{}}}$
the weight of $\path$, and let $\tail{\path}=\tail{\edge_{1}}$ as
well as $\head{\path}=\head{\edge_{\length{\path}{}}}$. A circuit
is a path $\path$ with $\head{\path}=\tail{\path}$, and the equivalence
class under cyclic permutation of its edges is called a cycle, denoted
by $[\path]$. If $\path$ is a circuit and $k\in\mathbb{Z}_{>0}$,
we let $\path^{k}$ denote the $k$-fold concatenation with itself,
in particular, $\length{\path^{k}}{}=k\length{\path}{}$ and $\weight{\path^{k}}=(\weight{\path})^{k}$.
A cycle is called primitive if none of its representatives is of the
form $\path^{k}$ with $k\geq2$.

If $\edge$ is an edge, then $\edge'=(\head{\edge},\tail{\edge})$
is said to be reverse to $\edge$, which is indicated by $\inverse{\edge'}{\edge}$.
The graph $\graph$ is said to have reciprocal weights if $\weight{\edge'}=\weight{\edge}^{-1}$
whenever $\inverse{\edge'}{\edge}$. In this case, $\weight{\edge}=\pm1$
if $\edge$ is a loop. Let $\bidirectional{\graph}$ denote the digraph
obtained by adding a reverse edge of reciprocal weight to each edge
of $\graph$. Note that $\bidirectional{\graph}$ has pairs of parallel
edges if $\graph$ has loops or pairs of mutually reverse edges. We
define the direction of edges $\edge$ of $\bidirectional{\graph}$
as 
\[
\sign{\edge}=\begin{cases}
\forward & \text{if }\edge\text{ belongs to }\graph,\\
\backward & \text{if }\edge\text{ was added.}
\end{cases}
\]
To each circuit $\path=(\edge_{1},\edge_{2},\ldots,\edge_{\length{\path}{}})$
in $\bidirectional{\graph}$, we associate $4$ non-negative integers,
called its cyclic bump counts. For $d,d'\in\{\forward,\backward\}$,
we define
\begin{eqnarray*}
\cyclicBumpCount d{d'}{\path} & = & |\{i=1,\ldots,\length{\path}{}\mid\sign{\edge_{i}}=d,\sign{\edge_{i+1}}=d',\inverse{\edge_{i}}{\edge_{i+1}}\}|,
\end{eqnarray*}
where $\edge_{\length{\path}{}+1}=\edge_{1}$. Moreover, we define
directional lengths as
\[
\length{\path}d=|\{i=1,\ldots,\length{\path}{}\mid\sign{\edge_{i}}=d\}|,\text{ so that }\length{\path}{}=\length{\path}{\forward}+\length{\path}{\backward}.
\]
As noted in Section~\ref{sec:Introduction}, weights, cyclic bump
counts, and directional lengths are well-defined for cycles. We encode
these quantities in terms of matrices associated with the line graph
of~$\bidirectional{\graph}$. For $d,d'\in\{\forward,\backward\}$,
let $\edgeBumpMatrix d{d'}\in\mathbb{\mathbb{C}}^{2\nrEdges{\graph}\times2\nrEdges{\graph}}$
be the weighted bump matrix given by 
\[
[\edgeBumpMatrix d{d'}]_{\edge\edge'}=\begin{cases}
\weight{\edge} & \text{if }\sign{\edge}=d,\sign{\edge'}=d',\text{ and }\inverse{\edge}{\edge'},\\
0 & \text{otherwise,}
\end{cases}
\]
and let $\edgeNonBumpMatrix d,M(\variablesCBC,\variablesL)\in\mathbb{\mathbb{C}}^{2\nrEdges{\graph}\times2\nrEdges{\graph}}$
be the weighted adjacency matrices given by
\[
[\edgeNonBumpMatrix d]_{\edge\edge'}=\begin{cases}
\weight{\edge} & \text{if }\sign{\edge}=d\text{ and }\runsInto{\edge}{\edge'},\\
0 & \text{otherwise},
\end{cases}
\]
and
\[
M(\variablesCBC,\variablesL)=\variableL{\forward}(\edgeNonBumpMatrix{\forward}+(\variableCBC{\forward}{\forward}-1)\edgeBumpMatrix{\forward}{\forward}+(\variableCBC{\forward}{\backward}-1)\edgeBumpMatrix{\forward}{\backward})+\variableL{\backward}(\edgeNonBumpMatrix{\backward}+(\variableCBC{\backward}{\backward}-1)\edgeBumpMatrix{\backward}{\backward}+(\variableCBC{\backward}{\forward}-1)\edgeBumpMatrix{\backward}{\forward}),
\]
where we reused the abbreviations $\variablesCBC=(\variableCBC{\forward}{\forward},\variableCBC{\backward}{\backward},\variableCBC{\forward}{\backward},\variableCBC{\backward}{\forward})$
and $\variablesL=(\variableL{\forward},\variableL{\backward})$ from
Definition~\ref{def:Zeta_function}. In order to translate cycles
in $\bidirectional{\graph}$ to closed walks on the vertices of $\graph$,
we let $\tailIncidenceMatrix\in\mathbb{\mathbb{C}}^{\nrVertices{\graph}\times2\nrEdges{\graph}}$
be the tail-incidence matrix of $\bidirectional{\graph}$ given by
\[
[\tailIncidenceMatrix]_{\vertex\edge}=\begin{cases}
1 & \tail e=\vertex,\\
0 & \text{otherwise.}
\end{cases}
\]

\begin{thm}
The function $\zeta_{\graph}^{-1}$ is a polynomial in the components
of $\variablesCBC$ and $\variablesL$, namely,
\[
\zeta_{\graph}^{-1}(\variablesCBC,\variablesL)=\det(\identityMatrix{2\nrEdges{\graph}}-M(\variablesCBC,\variablesL)).
\]
If $\graph$ has reciprocal weights, then 
\begin{eqnarray}
\zeta_{\graph}^{-1}(1,1,\variableCBC{\forward}{\backward},\variableCBC{\backward}{\forward},\variableL{\forward},\variableL{\backward}) & = & (1-\variableProduct{\forward}\variableProduct{\backward})^{\nrEdges{\graph}-\nrVertices{\graph}}\det((1-\variableProduct{\forward}\variableProduct{\backward})\identityMatrix{\nrVertices{\graph}}-\variableL{\forward}\weightMatrix{\graph}-\variableL{\backward}\weightMatrix{\graph}^{*}\nonumber \\
 &  & \hphantom{(1-\variableProduct{\forward}\variableProduct{\backward})^{\nrEdges{\graph}-\nrVertices{\graph}}\det((1-\variableProduct{\forward}}-\variableProduct{\forward}\variableL{\backward}\outdegreeMatrix{\graph}-\variableProduct{\backward}\variableL{\forward}\indegreeMatrix{\graph}),\label{eq:Zeta_with_fb_or_bf_bump_count}
\end{eqnarray}
where $\variableProduct{\forward}=\variableL{\forward}(\variableCBC{\forward}{\backward}-1)$,
$\variableProduct{\backward}=\variableL{\backward}(\variableCBC{\backward}{\forward}-1)$,
and $\weightMatrix{\graph}^{*}$ denotes the weighted adjacency matrix
of the added edges in $\bidirectional{\graph}$, i.e., $\weightMatrix{\graph}^{*}$
equals $\weightMatrix{\graph}^{T}$ up to inversion of its nonzero
entries, and
\begin{eqnarray}
\zeta_{\graph}^{-1}(\variableCBC{\forward}{\forward},1,1,1,\variableL{\forward},\variableL{\backward}) & = & (1-\variableProduct{\forward})^{\nrLoops{\graph}^{+}}(1+\variableProduct{\forward})^{\nrLoops{\graph}^{-}}(1-\variableProduct{\forward}^{2})^{\nrPairsOfReverseEdges{\graph}-\nrVertices{\graph}}\det((1-\variableProduct{\forward}^{2})\identityMatrix{\nrVertices{\graph}}\nonumber \\
 &  & -(1-\variableProduct{\forward}^{2})(\variableL{\forward}\weightMatrix{\graph}+\variableL{\backward}\weightMatrix{\graph}^{*})-\variableProduct{\forward}\variableL{\forward}\symPart{\degreeMatrix{\graph}}-\variableProduct{\forward}^{2}\variableL{\forward}\symPart{\weightMatrix{\graph}}),\label{eq:Zeta_with_ff_or_bb_bump_count}
\end{eqnarray}
where $\variableProduct{\forward}=\variableL{\forward}(\variableCBC{\forward}{\forward}-1)$,
$\nrLoops{\graph}^{\pm}=\mathrm{tr}(\adjacencyMatrix{\graph}\pm\weightMatrix{\graph})/2$,
and $\nrPairsOfReverseEdges{\graph}=\mathrm{tr}(\adjacencyMatrix{\graph}^{2})/2-\mathrm{tr}(\adjacencyMatrix{\graph})$,
the latter of which are the number of loops with weight $\pm1$ and
pairs of mutually reverse edges of $\graph$, respectively.\end{thm}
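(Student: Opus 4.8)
The plan is to treat the three assertions separately: the polynomial identity $\zeta_{\graph}^{-1}=\det(\identityMatrix{2\nrEdges{\graph}}-M)$ by a Bartholdi-style logarithmic expansion, and the two closed formulas~\eqref{eq:Zeta_with_fb_or_bf_bump_count} and~\eqref{eq:Zeta_with_ff_or_bb_bump_count} by Bass-style reductions of the resulting $2\nrEdges{\graph}\times2\nrEdges{\graph}$ determinant to an $\nrVertices{\graph}\times\nrVertices{\graph}$ one.

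\emph{First identity.} The matrix $M$ is assembled so that, for an edge step $\runsInto{\edge}{\edge'}$ in $\bidirectional{\graph}$, the entry $M_{\edge\edge'}$ equals $\variableL{\sign{\edge}}\weight{\edge}$ when the step is not a bump and $\variableL{\sign{\edge}}\variableCBC{\sign{\edge}}{\sign{\edge'}}\weight{\edge}$ when $\inverse{\edge}{\edge'}$; indeed, $\edgeNonBumpMatrix{\forward}+\edgeNonBumpMatrix{\backward}$ records every transition with weight $\weight{\edge}$, and the terms $(\variableCBC{d}{d'}-1)\edgeBumpMatrix{d}{d'}$ upgrade exactly the bump transitions from $\weight{\edge}$ to $\variableCBC{d}{d'}\weight{\edge}$. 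Multiplying entries along a circuit $\path=(\edge_{1},\ldots,\edge_{N})$ therefore reproduces precisely the monomial
\[
m(\path)=\weight{\path}\variableCBC{\forward}{\forward}^{\cyclicBumpCount{\forward}{\forward}{\path}}\variableCBC{\backward}{\backward}^{\cyclicBumpCount{\backward}{\backward}{\path}}\variableCBC{\forward}{\backward}^{\cyclicBumpCount{\forward}{\backward}{\path}}\variableCBC{\backward}{\forward}^{\cyclicBumpCount{\backward}{\forward}{\path}}\variableL{\forward}^{\length{\path}{\forward}}\variableL{\backward}^{\length{\path}{\backward}}
\]
of Definition~\ref{def:Zeta_function}, so that $\mathrm{tr}(M^{N})$ equals the sum of these monomials over all circuits of length $N$, each counted once per choice of starting edge. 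I would then expand $-\log\det(\identityMatrix{2\nrEdges{\graph}}-M)=\sum_{N\geq1}\frac{1}{N}\mathrm{tr}(M^{N})$ and regroup circuits by the cycle they represent: a cycle with primitive root $\path_{0}$ of length $p$ accounts for $p$ starting edges in $\mathrm{tr}(M^{kp})$ and carries the monomial $m(\path_{0})^{k}$, so the double sum collapses to $\sum_{[\path_{0}]\in\mathcal{P}_{\bidirectional{\graph}}}\sum_{k\geq1}m(\path_{0})^{k}/k=-\sum_{[\path_{0}]}\log(1-m(\path_{0}))=\log\zeta_{\graph}$. Carried out as an identity of formal power series (valid for small $|\variablesCBC|,|\variablesL|$), this yields the claim and incidentally shows $\zeta_{\graph}^{-1}$ is a polynomial.

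\emph{Reduction to the vertex level.} For the two closed formulas I would factor $M$ through the incidence matrices of $\bidirectional{\graph}$. Writing $U$ and $W_{E}$ for the diagonal $2\nrEdges{\graph}\times2\nrEdges{\graph}$ matrices with entries $\variableL{\sign{\edge}}$ and $\weight{\edge}$, the non-bump part $\variableL{\forward}\edgeNonBumpMatrix{\forward}+\variableL{\backward}\edgeNonBumpMatrix{\backward}$ equals $UW_{E}\headIncidenceMatrix^{T}\tailIncidenceMatrix$, where $\headIncidenceMatrix$ is the head-incidence matrix, while each $\edgeBumpMatrix{d}{d'}$ is supported on mutually reverse edge pairs. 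For~\eqref{eq:Zeta_with_fb_or_bf_bump_count} I set $\variableCBC{\forward}{\forward}=\variableCBC{\backward}{\backward}=1$, which deletes $\edgeBumpMatrix{\forward}{\forward}$ and $\edgeBumpMatrix{\backward}{\backward}$ and leaves only the reversal-type bumps $\edgeBumpMatrix{\forward}{\backward},\edgeBumpMatrix{\backward}{\forward}$ governed, for a simple digraph, by the clean involution $\edge\mapsto\bar{\edge}$ pairing each edge of $\graph$ with its added reverse. A Bass-type block computation in dimension $\nrVertices{\graph}+2\nrEdges{\graph}$, evaluated by Schur complements in two ways, then turns the incidence products $\tailIncidenceMatrix W_{E}\headIncidenceMatrix^{T}$ into $\weightMatrix{\graph},\weightMatrix{\graph}^{*},\outdegreeMatrix{\graph},\indegreeMatrix{\graph}$; the correction coefficients $(\variableCBC{\forward}{\backward}-1)$ and $(\variableCBC{\backward}{\forward}-1)$, localized on each reverse pair $\{\edge,\bar{\edge}\}$ and combined through the reciprocal weights $\weight{\bar{\edge}}=\weight{\edge}^{-1}$, contribute a scalar $1-\variableProduct{\forward}\variableProduct{\backward}$ per pair, accounting for the prefactor $(1-\variableProduct{\forward}\variableProduct{\backward})^{\nrEdges{\graph}-\nrVertices{\graph}}$.

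\emph{Main obstacle.} Formula~\eqref{eq:Zeta_with_ff_or_bb_bump_count} is where I expect the real work. Setting $\variableCBC{\forward}{\backward}=\variableCBC{\backward}{\forward}=1$ retains $\edgeBumpMatrix{\forward}{\forward}$ and $\edgeBumpMatrix{\backward}{\backward}$, whose bumps occur precisely at loops (an edge reverse to itself) and at pairs of mutually reverse edges of $\graph$. These destroy the tidy involution of the previous step: a loop of weight $+1$ and one of weight $-1$ enter with opposite sign, and each reverse pair spans its own $2\times2$ reversal block. I would isolate these configurations before running the block reduction, so that loops produce the factors $(1-\variableProduct{\forward})^{\nrLoops{\graph}^{+}}(1+\variableProduct{\forward})^{\nrLoops{\graph}^{-}}$, reverse pairs produce $(1-\variableProduct{\forward}^{2})^{\nrPairsOfReverseEdges{\graph}-\nrVertices{\graph}}$, and the surviving self-reverse contributions assemble into the symmetric-part terms $\symPart{\degreeMatrix{\graph}}$ and $\symPart{\weightMatrix{\graph}}$ of the vertex determinant. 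Keeping this loop-and-reverse-pair bookkeeping consistent with the direction conventions of $\bidirectional{\graph}$ is the step most likely to require care, since it is exactly where the directed, weighted setting departs from the classical undirected Bass and Bartholdi formulas.
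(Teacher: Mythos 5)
Your plan is sound, and for the two closed formulas it is essentially the route the paper takes; the genuine difference is in the first identity. There you use the logarithmic trace expansion $-\log\det(\identityMatrix{2\nrEdges{\graph}}-M)=\sum_{N\geq1}\mathrm{tr}(M^{N})/N$ and regroup closed walks by their primitive root, which is the classical Bass--Stark--Terras derivation; the paper instead invokes Amitsur's identity $\det(\identityMatrix{2\nrEdges{\graph}}-(M_{1}+\cdots+M_{2\nrEdges{\graph}}))=\prod_{\lyndonWord\in\lyndonWords}\det(\identityMatrix{2\nrEdges{\graph}}-M_{\lyndonWord})$ over Lyndon words, following Foata--Zeilberger. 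Both are valid as identities of formal power series: your version needs the bookkeeping that a cycle $[\path^{k}]$ with primitive root of length $p$ contributes exactly $p$ terms of $\mathrm{tr}(M^{kp})$, which you state correctly, while the paper's version gets the indexing by primitive cycles for free from the Lyndon-word combinatorics. Your route is arguably more self-contained; the paper's avoids any discussion of convergence or of exponentiating a formal logarithm.

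For~(\ref{eq:Zeta_with_fb_or_bf_bump_count}) and~(\ref{eq:Zeta_with_ff_or_bb_bump_count}), your ``Schur complements in two ways'' plan coincides with the paper's evaluation of $\det(KL)=\det(LK')$ with $G=(\variableL{\forward}\edgeBumpMatrix{\forward}{\backward}+\variableL{\backward}\edgeBumpMatrix{\backward}{\forward})\tailIncidenceMatrix^{T}$, and your accounting of where the scalar prefactors come from (namely $\det(E)$, one factor per loop or mutually reverse pair) matches the paper. What your sketch leaves unresolved, and what the paper supplies, is the algebraic device that makes the two-sided evaluation close up: one needs an explicit $F$ with $EF=z\identityMatrix{2\nrEdges{\graph}}$. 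For~(\ref{eq:Zeta_with_fb_or_bf_bump_count}) this comes from $\edgeBumpMatrix{\forward}{\backward}^{2}=\edgeBumpMatrix{\backward}{\forward}^{2}=0$ together with $\edgeBumpMatrix{\forward}{\backward}\edgeBumpMatrix{\backward}{\forward}+\edgeBumpMatrix{\backward}{\forward}\edgeBumpMatrix{\forward}{\backward}=\identityMatrix{2\nrEdges{\graph}}$, which is exactly where the reciprocal-weight hypothesis is used; for~(\ref{eq:Zeta_with_ff_or_bb_bump_count}) it comes from $\edgeBumpMatrix{\forward}{\forward}^{3}=\edgeBumpMatrix{\forward}{\forward}$, which lets $F=(1-\variableProduct{\forward}^{2})\identityMatrix{2\nrEdges{\graph}}+\variableProduct{\forward}\edgeBumpMatrix{\forward}{\forward}+\variableProduct{\forward}^{2}\edgeBumpMatrix{\forward}{\forward}^{2}$ invert $E=\identityMatrix{2\nrEdges{\graph}}-\variableProduct{\forward}\edgeBumpMatrix{\forward}{\forward}$ up to the scalar $z=1-\variableProduct{\forward}^{2}$, after which the terms $\symPart{\degreeMatrix{\graph}}$ and $\symPart{\weightMatrix{\graph}}$ appear from $\tailIncidenceMatrix\edgeBumpMatrix{\forward}{\forward}\edgeBumpMatrix{\forward}{\backward}\tailIncidenceMatrix^{T}$ and $\tailIncidenceMatrix\edgeBumpMatrix{\forward}{\forward}^{2}\edgeBumpMatrix{\forward}{\backward}\tailIncidenceMatrix^{T}$. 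You have correctly located this as the hard step, but as written the third formula remains a statement of intent rather than a proof.
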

\begin{proof}
Using ideas from~\cite{Hashimoto1989,Hashimoto1992,Bass1992,FoataZeilberger1999,ChoeKwakParkSato2007,Bartholdi2008},
we let $\lyndonWords$ be the set of Lyndon words of the ordered set
$X=\{1,2,\ldots,2\nrEdges{\graph}\}$, i.e., words in the free monoid
$X^{*}$ that are minimal among their cyclic rearrangements with respect
to the lexicographical order, and that are not of the form $\lyndonWord^{k}$
for some $\lyndonWord\in X^{*}$ and $k\geq2$. Let $M_{1},M_{2},\ldots,M_{2\nrEdges{\graph}}$
be the matrices obtained from $M=M(\variablesCBC,\variablesL)$ by
multiplying all but one of its rows by~$0$, namely, $[M_{i}]_{jk}=\delta_{ij}[M]_{jk}$,
where $\delta_{ij}$ is the Kronecker delta. For $\lyndonWord=i_{1}i_{2}\ldots i_{\length{\lyndonWord}{}}\in\lyndonWords$,
let $M_{\lyndonWord}=M_{i_{1}}M_{i_{2}}\cdots M_{i_{\length{\lyndonWord}{}}}$.
According to~\cite{FoataZeilberger1999}, Amitsur's identity~\cite{Amitsur1979}
reads 
\[
\det(\identityMatrix{2\nrEdges{\graph}}-M(\variablesCBC,\variablesL))=\det(\identityMatrix{2\nrEdges{\graph}}-(M_{1}+M_{2}+\ldots+M_{2\nrEdges{\graph}}))=\prod_{\lyndonWord\in\lyndonWords}\det(\identityMatrix{2\nrEdges{\graph}}-M_{\lyndonWord}),
\]
and holds as an identity of formal power series in the entries of
$M(\variablesCBC,\variablesL)$. Note that $[M_{\lyndonWord}]_{jk}=0$
for all $j\neq i_{1}$, and that $z_{\lyndonWord}=[M_{\lyndonWord}]_{i_{1}i_{1}}=[M]_{i_{1}i_{2}}[M]_{i_{2}i_{3}}\cdots[M]_{i_{\length{\lyndonWord}{}}i_{1}}$.
In particular, 
\[
\det(\identityMatrix{2\nrEdges{\graph}}-M(\variablesCBC,\variablesL))=\prod_{\lyndonWord\in\lyndonWords}\det(1-z_{\lyndonWord}).
\]
If $\lyndonWord=i_{1}i_{2}\ldots i_{\length{\lyndonWord}{}}\in\lyndonWords$,
then $z_{\lyndonWord}=0$ unless $\path=(i_{1},i_{2},\ldots,i_{\length{\lyndonWord}{}})$
is a circuit in $\graph$, in which case
\[
z_{\lyndonWord}=\weight{\path}\variableCBC{\forward}{\forward}^{\cyclicBumpCount{\forward}{\forward}{\path}}\variableCBC{\backward}{\backward}^{\cyclicBumpCount{\backward}{\backward}{\path}}\variableCBC{\forward}{\backward}^{\cyclicBumpCount{\forward}{\backward}{\path}}\variableCBC{\backward}{\forward}^{\cyclicBumpCount{\backward}{\forward}{\path}}\variableL{\forward}^{\length{\path}{\forward}}\variableL{\backward}^{\length{\path}{\backward}}.
\]
In order to verify~(\ref{eq:Zeta_with_fb_or_bf_bump_count}) and~(\ref{eq:Zeta_with_ff_or_bb_bump_count}),
we consider the $(2\nrEdges{\graph}+\nrVertices{\graph})\times(2\nrEdges{\graph}+\nrVertices{\graph})$
block matrices 
\[
K=\left(\begin{array}{cc}
\identityMatrix{2\nrEdges{\graph}} & G\\
0 & z\identityMatrix{\nrVertices{\graph}}
\end{array}\right),\qquad L=\left(\begin{array}{cc}
E & -G\\
-\tailIncidenceMatrix & \identityMatrix{\nrVertices{\graph}}
\end{array}\right),\qquad K'=\left(\begin{array}{cc}
\identityMatrix{2\nrEdges{\graph}} & FG\\
0 & z\identityMatrix{\nrVertices{\graph}}
\end{array}\right),
\]
where $z\in\mathbb{C}$, $E,F\in\mathbb{\mathbb{C}}^{2\nrEdges{\graph}\times2\nrEdges{\graph}}$,
and $G\in\mathbb{\mathbb{C}}^{2\nrEdges{\graph}\times\nrVertices{\graph}}$.
Since $\det(K)=\det(K')$, we have $\det(KL)=\det(LK')$. The latter
are given by
\[
\det(KL)=\det\left(\begin{array}{cc}
E-G\tailIncidenceMatrix & 0\\
-z\tailIncidenceMatrix & z\identityMatrix{\nrVertices{\graph}}
\end{array}\right)=z^{\nrVertices{\graph}}\det(E-G\tailIncidenceMatrix)
\]
and, provided that $EF=z\identityMatrix{2\nrEdges{\graph}}$,
\[
\det(LK')=\left(\begin{array}{cc}
E & (EF-z\identityMatrix{2\nrEdges{\graph}})G\\
-\tailIncidenceMatrix & z\identityMatrix{\nrVertices{\graph}}-\tailIncidenceMatrix FG
\end{array}\right)=\det(E)\det(z\identityMatrix{\nrVertices{\graph}}-\tailIncidenceMatrix FG).
\]
We choose
\[
G=(\variableL{\forward}\edgeBumpMatrix{\forward}{\backward}+\variableL{\backward}\edgeBumpMatrix{\backward}{\forward})\tailIncidenceMatrix^{T},\qquad\text{so that}\qquad G\tailIncidenceMatrix=\variableL{\forward}\edgeNonBumpMatrix{\forward}+\variableL{\backward}\edgeNonBumpMatrix{\backward}.
\]
In order to show~(\ref{eq:Zeta_with_fb_or_bf_bump_count}), we let
$\variableProduct{\forward}=\variableL{\forward}(\variableCBC{\forward}{\backward}-1)$,
$\variableProduct{\backward}=\variableL{\backward}(\variableCBC{\backward}{\forward}-1)$,
$z=1-\variableProduct{\forward}\variableProduct{\backward}$, 
\[
E=\identityMatrix{2\nrEdges{\graph}}-\variableProduct{\forward}\edgeBumpMatrix{\forward}{\backward}-\variableProduct{\backward}\edgeBumpMatrix{\backward}{\forward},\qquad\text{and}\qquad F=\identityMatrix{2\nrEdges{\graph}}+\variableProduct{\forward}\edgeBumpMatrix{\forward}{\backward}+\variableProduct{\backward}\edgeBumpMatrix{\backward}{\forward}.
\]
 Since $\edgeBumpMatrix{\forward}{\backward}^{2}=\edgeBumpMatrix{\backward}{\forward}^{2}=0$
and $\edgeBumpMatrix{\forward}{\backward}\edgeBumpMatrix{\backward}{\forward}+\edgeBumpMatrix{\backward}{\forward}\edgeBumpMatrix{\forward}{\backward}=\identityMatrix{2\nrEdges{\graph}}$,
we have
\[
EF=\identityMatrix{2\nrEdges{\graph}}-\variableProduct{\forward}\variableProduct{\backward}(\edgeBumpMatrix{\forward}{\backward}\edgeBumpMatrix{\backward}{\forward}+\edgeBumpMatrix{\backward}{\forward}\edgeBumpMatrix{\forward}{\backward})=z\identityMatrix{2\nrEdges{\graph}}.
\]
Moreover, $E-G\tailIncidenceMatrix$ equals $\identityMatrix{2\nrEdges{\graph}}-M(\variablesCBC,\variablesL)$
with $\variableCBC{\forward}{\forward}=\variableCBC{\backward}{\backward}=1$,
and 
\[
\begin{aligned}\tailIncidenceMatrix FG & =\variableL{\forward}\tailIncidenceMatrix\edgeBumpMatrix{\forward}{\backward}\tailIncidenceMatrix^{T}+\variableL{\backward}\tailIncidenceMatrix\edgeBumpMatrix{\backward}{\forward}\tailIncidenceMatrix^{T}+\variableProduct{\forward}\variableL{\backward}\tailIncidenceMatrix\edgeBumpMatrix{\forward}{\backward}\edgeBumpMatrix{\backward}{\forward}\tailIncidenceMatrix^{T}+\variableProduct{\backward}\variableL{\forward}\tailIncidenceMatrix\edgeBumpMatrix{\backward}{\forward}\edgeBumpMatrix{\forward}{\backward}\tailIncidenceMatrix^{T}\\
 & =\variableL{\forward}\weightMatrix{\graph}+\variableL{\backward}\weightMatrix{\graph}^{*}+\variableProduct{\forward}\variableL{\backward}\outdegreeMatrix{\graph}+\variableProduct{\backward}\variableL{\forward}\indegreeMatrix{\graph}.
\end{aligned}
\]
If we number the edges of $\bidirectional{\graph}$ so that each edge
of $\graph$ is followed by its added reverse, then $E$ turns into
a block-diagonal matrix with $\nrEdges{\graph}$ blocks of the form
\begin{equation}
\left(\begin{array}{cc}
1 & -\variableProduct{\forward}\weight{\edge}\\
-\variableProduct{\backward}\weight{\edge}^{-1} & 1
\end{array}\right),\text{ showing that }\det(E)=(1-\variableProduct{\forward}\variableProduct{\backward})^{\nrEdges{\graph}}=z^{\nrEdges{\graph}}.\label{eq:det_E_as_contribution_of_edges}
\end{equation}
In order to show~(\ref{eq:Zeta_with_ff_or_bb_bump_count}), we let
$\variableProduct{\forward}=\variableL{\forward}(\variableCBC{\forward}{\forward}-1)$
and $z=1-\variableProduct{\forward}^{2}$, as well as
\[
E=\identityMatrix{2\nrEdges{\graph}}-\variableProduct{\forward}\edgeBumpMatrix{\forward}{\forward}\qquad\text{and}\qquad F=(1-\variableProduct{\forward}^{2})\identityMatrix{2\nrEdges{\graph}}+\variableProduct{\forward}\edgeBumpMatrix{\forward}{\forward}+\variableProduct{\forward}^{2}\edgeBumpMatrix{\forward}{\forward}^{2}.
\]
Since $\edgeBumpMatrix{\forward}{\forward}^{3}=\edgeBumpMatrix{\forward}{\forward}$,
we have $EF=(1-\variableProduct{\forward}^{2})\identityMatrix{2\nrEdges{\graph}}=z\identityMatrix{2\nrEdges{\graph}}$.
Also, $E-G\tailIncidenceMatrix$ equals $\identityMatrix{2\nrEdges{\graph}}-M(\variablesCBC,\variablesL)$
with $\variableCBC{\backward}{\backward}=\variableCBC{\forward}{\backward}=\variableCBC{\backward}{\forward}=1$,
and
\[
\begin{aligned}\tailIncidenceMatrix FG & =(1-\variableProduct{\forward}^{2})(\variableL{\forward}\tailIncidenceMatrix\edgeBumpMatrix{\forward}{\backward}\tailIncidenceMatrix^{T}+\variableL{\backward}\tailIncidenceMatrix\edgeBumpMatrix{\backward}{\forward}\tailIncidenceMatrix^{T})+\variableProduct{\forward}\variableL{\forward}\tailIncidenceMatrix\edgeBumpMatrix{\forward}{\forward}\edgeBumpMatrix{\forward}{\backward}\tailIncidenceMatrix^{T}+\variableProduct{\forward}^{2}\variableL{\forward}\tailIncidenceMatrix\edgeBumpMatrix{\forward}{\forward}^{2}\edgeBumpMatrix{\forward}{\backward}\tailIncidenceMatrix^{T}\\
 & =(1-\variableProduct{\forward}^{2})(\variableL{\forward}\weightMatrix{\graph}+\variableL{\backward}\weightMatrix{\graph}^{*})+\variableProduct{\forward}\variableL{\forward}\symPart{\degreeMatrix{\graph}}+\variableProduct{\forward}^{2}\variableL{\forward}\symPart{\weightMatrix{\graph}}.
\end{aligned}
\]
Similarly to~(\ref{eq:det_E_as_contribution_of_edges}), each loop
$\edge$ and each pair of mutually reverse edges of $\graph$ contributes
a factor of $1-\variableProduct{\forward}\weight{\edge}$ and $1-\variableProduct{\forward}^{2}$
to $\det(E)$, respectively.
\end{proof}

\section{Invasions\label{sec:Invasions}}

\noindent In this section, we consider unweighted digraphs that may
have parallel edges.
\begin{defn}
An invader is a digraph $\substituent$ with $\nrVertices{\substituent}\geq2$
vertices, the first and last of which are called native. The $\nrVertices{\substituent}-2$
remaining vertices of $\substituent$ are called invasive. If $\graph$
is a digraph, then the invaded digraph $\invadedDiraph{\substituent}{\graph}$
is obtained by replacing each edge $(\vertex,\vertex')$ of $\graph$
with a copy of $\substituent$, where $\vertex$ and $\vertex'$ are
identified with the first and last vertex of $\substituent$, respectively.
\end{defn}
Let $\substituent$ be an invader and let $\substituentTail$ and
$\substituentHead$ denote its first and last vertex, respectively.
Moreover, let $\core$ denote the subdigraph of $\substituent$ induced
by the $\nrVertices{\core}=\nrVertices{\substituent}-2$ invasive
vertices. In particular, $\adjacencyMatrix{\substituent}$ takes the
block form
\[
\adjacencyMatrix{\substituent}=\left(\begin{array}{ccc}
[\adjacencyMatrix{\substituent}]_{\substituentTail\substituentTail} & \rowVector{\substituentTail\rightarrow} & [\adjacencyMatrix{\substituent}]_{\substituentTail\substituentHead}\\
\rowVector{\substituentTail\leftarrow} & \adjacencyMatrix{\core} & \rowVector{\substituentHead\leftarrow}\\
{}[\adjacencyMatrix{\substituent}]_{\substituentHead\substituentTail} & \rowVector{\substituentHead\rightarrow} & [\adjacencyMatrix{\substituent}]_{\substituentHead\substituentHead}
\end{array}\right),
\]
where $\rowVector{\substituentTail\rightarrow},\rowVector{\substituentHead\rightarrow}\in\mathbb{Z}_{\geq0}^{1\times\nrVertices{\core}}$
and $\rowVector{\substituentTail\leftarrow},\rowVector{\substituentHead\leftarrow}\in\mathbb{Z}_{\geq0}^{\nrVertices{\core}\times1}$.
We identify the vertices of $\graph$ with the native ones of $\invadedDiraph{\substituent}{\graph}$.
In particular, we number the vertices of $\invadedDiraph{\substituent}{\graph}$,
starting with vertices of $\graph$ followed by one block of invasive
ones per edge of~$\graph$, so that $\adjacencyMatrix{\invadedDiraph{\substituent}{\graph}}$
takes the block form
\begin{equation}
\adjacencyMatrix{\invadedDiraph{\substituent}{\graph}}=\left(\begin{array}{ccccc}
\adjacencyMatrix{\substituentTail\leftrightarrow\substituentHead} & B_{1} & B_{2} & \cdots & B_{\nrEdges{\graph}}\\
B_{1}^{'} & \adjacencyMatrix{\core} & 0 & \cdots & 0\\
B_{2}^{'} & 0 & \adjacencyMatrix{\core} & \ddots & 0\\
\vdots & \vdots & \ddots & \ddots & \vdots\\
B_{\nrEdges{\graph}}^{'} & 0 & 0 & \cdots & \adjacencyMatrix{\core}
\end{array}\right),\label{eq:Adjacency_matrix_of_invaded_graph}
\end{equation}
where
\begin{equation}
\adjacencyMatrix{\substituentTail\leftrightarrow\substituentHead}=[\adjacencyMatrix{\substituent}]_{\substituentTail t}\outdegreeMatrix{\graph}+[\adjacencyMatrix{\substituent}]_{h\substituentHead}\indegreeMatrix{\graph}+[\adjacencyMatrix{\substituent}]_{\substituentTail\substituentHead}\adjacencyMatrix{\graph}+[\adjacencyMatrix{\substituent}]_{\substituentHead\substituentTail}\adjacencyMatrix{\graph}^{T},\label{eq:First_block}
\end{equation}
and each $B_{i}$ has either one or two possibly nonzero rows, given
by 
\[
\rowVector{\substituentTail\rightarrow}+\rowVector{\substituentHead\rightarrow}\qquad\text{or}\qquad\rowVector{\substituentTail\rightarrow}\text{ and }\rowVector{\substituentHead\rightarrow},
\]
respectively, and similarly for the $B_{i}'$ blocks. Let $\adjointMatrix{x\identityMatrix{\nrVertices{\core}}-\adjacencyMatrix{\core}}$
denote the adjugate matrix of $x\identityMatrix{\nrVertices{\core}}-\adjacencyMatrix{\core}$,
in particular, 
\[
(x\identityMatrix{\nrVertices{\core}}-\adjacencyMatrix{\core})\adjointMatrix{x\identityMatrix{\nrVertices{\core}}-\adjacencyMatrix{\core}}=\det(x\identityMatrix{\nrVertices{\core}}-\adjacencyMatrix{\core})\identityMatrix{\nrVertices{\core}}=\characteristicPolynomial{\core}(x)\identityMatrix{\nrVertices{\core}}.
\]

\begin{prop}
\label{prop:Char_poly_of_invaded_graph}The characteristic polynomial
of $\invadedDiraph{\substituent}{\graph}$ is given by
\begin{eqnarray*}
\characteristicPolynomial{\invadedDiraph{\substituent}{\graph}}(x) & = & \characteristicPolynomial{\core}^{\nrEdges{\graph}-\nrVertices{\graph}}(x)\det\Big(x\characteristicPolynomial{\core}(x)\identityMatrix{\nrVertices{\graph}}\\
 &  & \qquad-\left([\adjacencyMatrix{\substituent}]_{\substituentTail t}\characteristicPolynomial{\core}(x)+p_{\substituentTail\substituentTail}(x)\right)\outdegreeMatrix{\graph}-\left([\adjacencyMatrix{\substituent}]_{h\substituentHead}\characteristicPolynomial{\core}(x)+p_{\substituentHead\substituentHead}(x)\right)\indegreeMatrix{\graph}\\
 &  & \qquad-\left([\adjacencyMatrix{\substituent}]_{\substituentTail\substituentHead}\characteristicPolynomial{\core}(x)+p_{\substituentTail\substituentHead}(x)\right)\adjacencyMatrix{\graph}-\left([\adjacencyMatrix{\substituent}]_{\substituentHead\substituentTail}\characteristicPolynomial{\core}(x)+p_{\substituentHead\substituentTail}(x)\right)\adjacencyMatrix{\graph}^{T}\Big),
\end{eqnarray*}
where
\begin{equation}
p_{\vertex\vertex'}(x)=\sum_{j,k=1}^{\nrVertices{\core}}[\rowVector{\vertex\rightarrow}]_{j}[\adjointMatrix{x\identityMatrix{\nrVertices{\core}}-\adjacencyMatrix{\core}}]_{jk}[\rowVector{\vertex'\leftarrow}]_{k}.\label{eq:Polynomial_factors}
\end{equation}
\end{prop}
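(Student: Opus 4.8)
The plan is to evaluate $\characteristicPolynomial{\invadedDiraph{\substituent}{\graph}}(x)=\det(x\identityMatrix{}-\adjacencyMatrix{\invadedDiraph{\substituent}{\graph}})$ by a Schur-complement reduction that eliminates all invasive vertices simultaneously. Reading off the block form~(\ref{eq:Adjacency_matrix_of_invaded_graph}), the matrix $x\identityMatrix{}-\adjacencyMatrix{\invadedDiraph{\substituent}{\graph}}$ has a lower-right block that is block-diagonal with $\nrEdges{\graph}$ copies of $x\identityMatrix{\nrVertices{\core}}-\adjacencyMatrix{\core}$, hence of determinant $\characteristicPolynomial{\core}(x)^{\nrEdges{\graph}}$. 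Working over the field $\mathbb{C}(x)$, where this block is invertible because $\characteristicPolynomial{\core}(x)\neq0$, the standard Schur-complement identity gives
\[
\characteristicPolynomial{\invadedDiraph{\substituent}{\graph}}(x)=\characteristicPolynomial{\core}(x)^{\nrEdges{\graph}}\det\Big((x\identityMatrix{\nrVertices{\graph}}-\adjacencyMatrix{\substituentTail\leftrightarrow\substituentHead})-\sum_{i=1}^{\nrEdges{\graph}}B_{i}(x\identityMatrix{\nrVertices{\core}}-\adjacencyMatrix{\core})^{-1}B_{i}^{'}\Big).
\]
Both sides are polynomials in $x$, so this a priori rational identity in fact holds in $\mathbb{C}[x]$; passing to $\mathbb{C}(x)$ is the only place invertibility is used, and it costs nothing. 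Note also that $B_{i}(x\identityMatrix{\nrVertices{\core}}-\adjacencyMatrix{\core})^{-1}B_{j}^{'}$ never appears for $i\neq j$, since the inverse of the block-diagonal lower-right block is again block-diagonal.

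I would then substitute $(x\identityMatrix{\nrVertices{\core}}-\adjacencyMatrix{\core})^{-1}=\characteristicPolynomial{\core}(x)^{-1}\adjointMatrix{x\identityMatrix{\nrVertices{\core}}-\adjacencyMatrix{\core}}$ and clear the denominator out of the $\nrVertices{\graph}\times\nrVertices{\graph}$ determinant. Factoring $\characteristicPolynomial{\core}(x)^{-1}$ from each of its $\nrVertices{\graph}$ rows produces $\characteristicPolynomial{\core}(x)^{-\nrVertices{\graph}}$, which combines with the prefactor to give the stated exponent $\nrEdges{\graph}-\nrVertices{\graph}$ and leaves
\[
\det\Big(x\characteristicPolynomial{\core}(x)\identityMatrix{\nrVertices{\graph}}-\characteristicPolynomial{\core}(x)\adjacencyMatrix{\substituentTail\leftrightarrow\substituentHead}-\sum_{i=1}^{\nrEdges{\graph}}B_{i}\,\adjointMatrix{x\identityMatrix{\nrVertices{\core}}-\adjacencyMatrix{\core}}\,B_{i}^{'}\Big).
\]
By~(\ref{eq:First_block}) the term $\characteristicPolynomial{\core}(x)\adjacencyMatrix{\substituentTail\leftrightarrow\substituentHead}$ already contributes $\outdegreeMatrix{\graph},\indegreeMatrix{\graph},\adjacencyMatrix{\graph},\adjacencyMatrix{\graph}^{T}$ with the coefficients $[\adjacencyMatrix{\substituent}]_{\substituentTail t}\characteristicPolynomial{\core}(x)$, $[\adjacencyMatrix{\substituent}]_{h\substituentHead}\characteristicPolynomial{\core}(x)$, $[\adjacencyMatrix{\substituent}]_{\substituentTail\substituentHead}\characteristicPolynomial{\core}(x)$, and $[\adjacencyMatrix{\substituent}]_{\substituentHead\substituentTail}\characteristicPolynomial{\core}(x)$, so it only remains to show that $\sum_{i}B_{i}\,\adjointMatrix{x\identityMatrix{\nrVertices{\core}}-\adjacencyMatrix{\core}}\,B_{i}^{'}=p_{\substituentTail\substituentTail}\outdegreeMatrix{\graph}+p_{\substituentHead\substituentHead}\indegreeMatrix{\graph}+p_{\substituentTail\substituentHead}\adjacencyMatrix{\graph}+p_{\substituentHead\substituentTail}\adjacencyMatrix{\graph}^{T}$.

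This is the heart of the argument, which I would carry out edge by edge. For an edge $\edge_{i}$ of $\graph$ from $\vertex$ to $\vertex'$, the attached copy of $\substituent$ links its core to $\vertex$ through $\rowVector{\substituentTail\rightarrow},\rowVector{\substituentTail\leftarrow}$ and to $\vertex'$ through $\rowVector{\substituentHead\rightarrow},\rowVector{\substituentHead\leftarrow}$; concretely $B_{i}$ has row $\vertex$ equal to $\rowVector{\substituentTail\rightarrow}$ and row $\vertex'$ equal to $\rowVector{\substituentHead\rightarrow}$, while $B_{i}^{'}$ has column $\vertex$ equal to $\rowVector{\substituentTail\leftarrow}$ and column $\vertex'$ equal to $\rowVector{\substituentHead\leftarrow}$. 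By the definition~(\ref{eq:Polynomial_factors}) of $p_{\vertex\vertex'}$, the only nonzero entries of $B_{i}\,\adjointMatrix{x\identityMatrix{\nrVertices{\core}}-\adjacencyMatrix{\core}}\,B_{i}^{'}$ are then $p_{\substituentTail\substituentTail}(x)$ at $(\vertex,\vertex)$, $p_{\substituentHead\substituentHead}(x)$ at $(\vertex',\vertex')$, $p_{\substituentTail\substituentHead}(x)$ at $(\vertex,\vertex')$, and $p_{\substituentHead\substituentTail}(x)$ at $(\vertex',\vertex)$. Summing over all edges, the diagonal placements accumulate exactly into $\outdegreeMatrix{\graph}$ and $\indegreeMatrix{\graph}$, and the off-diagonal placements into $\adjacencyMatrix{\graph}$ and $\adjacencyMatrix{\graph}^{T}$, which is the desired identity; a loop $\vertex=\vertex'$ folds all four contributions into the single position $(\vertex,\vertex)$ and is handled uniformly, since such a loop adds one to each of $\outdegreeMatrix{\graph}$, $\indegreeMatrix{\graph}$, $\adjacencyMatrix{\graph}$, and $\adjacencyMatrix{\graph}^{T}$ there.

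The main obstacle is the combinatorial bookkeeping of this last step rather than any analytic difficulty: one must verify that the one- or two-nonzero-row structure of the $B_{i}$ and the matching column structure of the $B_{i}^{'}$ (as recorded after~(\ref{eq:Adjacency_matrix_of_invaded_graph})) reassemble, after summation over all edges of $\graph$, into precisely the four matrices $\outdegreeMatrix{\graph},\indegreeMatrix{\graph},\adjacencyMatrix{\graph},\adjacencyMatrix{\graph}^{T}$, with the loop case absorbed correctly. Everything else reduces to the Schur-complement formula together with the adjugate relation $(x\identityMatrix{\nrVertices{\core}}-\adjacencyMatrix{\core})\adjointMatrix{x\identityMatrix{\nrVertices{\core}}-\adjacencyMatrix{\core}}=\characteristicPolynomial{\core}(x)\identityMatrix{\nrVertices{\core}}$.
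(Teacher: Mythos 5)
Your proposal is correct and follows essentially the same route as the paper: both reduce to the $\nrVertices{\graph}\times\nrVertices{\graph}$ determinant of $\characteristicPolynomial{\core}(x)(x\identityMatrix{\nrVertices{\graph}}-\adjacencyMatrix{\substituentTail\leftrightarrow\substituentHead})-\sum_{i}B_{i}\,\adjointMatrix{x\identityMatrix{\nrVertices{\core}}-\adjacencyMatrix{\core}}\,B_{i}'$ and then verify the identity~(\ref{eq:Native_invase_interaction}) by the same edge-by-edge bookkeeping, including the loop case. The only cosmetic difference is that you perform the elimination as a Schur complement over $\mathbb{C}(x)$ and clear denominators afterward, whereas the paper multiplies by an explicit auxiliary block matrix built from the adjugate so as to stay inside $\mathbb{C}[x]$ throughout.
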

\begin{proof}
The polynomial $\characteristicPolynomial{\invadedDiraph{\substituent}{\graph}}(x)\characteristicPolynomial{\core}^{\nrVertices{\graph}}(x)$
equals\setlength{\arraycolsep}{0pt}
\begin{equation}
\begin{array}{l}
\det\left(\begin{array}{cccc}
x\identityMatrix{\nrVertices{\graph}}-\adjacencyMatrix{\substituentTail\leftrightarrow\substituentHead} & -B_{1} & \cdots & -B_{\nrEdges{\graph}}\\
-B_{1}^{'} & x\identityMatrix{\nrVertices{\core}}-\adjacencyMatrix{\core} &  & 0\\
\vdots &  & \ddots & \vdots\\
-B_{\nrEdges{\graph}}^{'} & 0 & \cdots & x\identityMatrix{\nrVertices{\core}}-\adjacencyMatrix{\core}
\end{array}\right)\det\left(\begin{array}{cccc}
\characteristicPolynomial{\core}(x)\identityMatrix{\nrVertices{\graph}} & 0 & \cdots & 0\\
\adjointMatrix{xI-\adjacencyMatrix{\core}}B_{1}' & \identityMatrix{\nrVertices{\core}} &  & 0\\
\vdots &  & \ddots & \vdots\\
\adjointMatrix{xI-\adjacencyMatrix{\core}}B_{\nrEdges{\graph}}' & 0 & \cdots & \identityMatrix{\nrVertices{\core}}
\end{array}\right)\\
=\det\left(\begin{array}{cccc}
M(x) & -B_{1} & \cdots & -B_{\nrEdges{\graph}}\\
0 & x\identityMatrix{\nrVertices{\core}}-\adjacencyMatrix{\core} &  & 0\\
\vdots &  & \ddots & \vdots\\
0 & 0 & \cdots & x\identityMatrix{\nrVertices{\core}}-\adjacencyMatrix{\core}
\end{array}\right)=\characteristicPolynomial{\core}^{\nrEdges{\graph}}(x)\det(M(x)),
\end{array}\label{eq:Char_poly_of_invaded_graph}
\end{equation}
\setlength{\arraycolsep}{\myArraycolsep}where 
\begin{equation}
M(x)=\characteristicPolynomial{\core}(x)(x\identityMatrix{\nrVertices{\graph}}-\adjacencyMatrix{\substituentTail\leftrightarrow\substituentHead})-\sum_{i=1}^{\nrEdges{\graph}}B_{i}\adjointMatrix{x\identityMatrix{\nrVertices{\core}}-\adjacencyMatrix{\core}}B_{i}'.\label{eq:M_char_poly_of_invaded_graph}
\end{equation}
If the blocks $B_{i}$ and $B_{i}'$ correspond to the edge $(\vertex,\vertex')$
of $\graph$, then the only possibly nonzero entries of $B_{i}\,\adjointMatrix{x\identityMatrix{\nrVertices{\core}}-\adjacencyMatrix{\core}}B_{i}'$
have indices in $\{\vertex,\vertex'\}$. If $\vertex=\vertex'$, we
have
\[
[B_{i}\adjointMatrix{x\identityMatrix{\nrVertices{\core}}-\adjacencyMatrix{\core}}B_{i}']_{\vertex\vertex}=\sum_{j,k=1}^{\nrVertices{\core}}[\rowVector{\substituentTail\rightarrow}+\rowVector{\substituentHead\rightarrow}]_{j}[\adjointMatrix{x\identityMatrix{\nrVertices{\core}}-\adjacencyMatrix{\core}}]_{jk}[\rowVector{\substituentTail\leftarrow}+\rowVector{\substituentHead\leftarrow}]_{k},
\]
 whereas if $\vertex\neq\vertex'$ and $w,w'\in\{\vertex,\vertex'\}$,
we have
\[
[B_{i}\adjointMatrix{x\identityMatrix{\nrVertices{\core}}-\adjacencyMatrix{\core}}B_{i}']_{ww'}=\sum_{j,k=1}^{\nrVertices{\core}}[\rowVector{q(w)\rightarrow}]_{j}[\adjointMatrix{x\identityMatrix{\nrVertices{\core}}-\adjacencyMatrix{\core}}]_{jk}[\rowVector{q(w')\leftarrow}]_{k},
\]
where $q(\vertex)=\substituentTail$ and $q(\vertex')=\substituentHead$.
In any case,
\begin{equation}
\sum_{i=1}^{\nrEdges{\graph}}B_{i}\adjointMatrix{x\identityMatrix{\nrVertices{\core}}-\adjacencyMatrix{\core}}B_{i}'=p_{\substituentTail\substituentTail}(x)\outdegreeMatrix{\graph}+p_{\substituentHead\substituentHead}(x)\indegreeMatrix{\graph}+p_{\substituentTail\substituentHead}(x)\adjacencyMatrix{\graph}+p_{\substituentHead\substituentTail}(x)\adjacencyMatrix{\graph}^{T}.\label{eq:Native_invase_interaction}
\end{equation}
The claim now follows from~(\ref{eq:Char_poly_of_invaded_graph}),
(\ref{eq:M_char_poly_of_invaded_graph}), (\ref{eq:First_block}),
and~(\ref{eq:Native_invase_interaction}).\end{proof}
\begin{cor}
If $\substituent$ is a directed path from $\substituentTail$ to
$\substituentHead$ with $\nrVertices{\substituent}$ vertices, then
\[
\characteristicPolynomial{\invadedDiraph{\substituent}{\graph}}(x)=x^{(\nrVertices{\substituent}-2)(\nrEdges{\graph}-\nrVertices{\graph})}\characteristicPolynomial{\graph}(x^{\nrVertices{\substituent}-1})=x^{(\nrVertices{\substituent}-2)(\nrEdges{\graph}-\nrVertices{\graph})}\det(x^{\nrVertices{\substituent}-1}\identityMatrix{\nrVertices{\graph}}-\adjacencyMatrix{\graph}).
\]
\end{cor}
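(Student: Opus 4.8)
The plan is to specialize Proposition~\ref{prop:Char_poly_of_invaded_graph} to the case where $\substituent$ is the directed path $\substituentTail\to\vertex_{1}\to\cdots\to\vertex_{\nrVertices{\core}}\to\substituentHead$ on $\nrVertices{\substituent}$ vertices, so the core $\core$ is the directed path on its $\nrVertices{\core}=\nrVertices{\substituent}-2$ invasive vertices. Ordering these so that $[\adjacencyMatrix{\core}]_{i,i+1}=1$, the matrix $\adjacencyMatrix{\core}$ is strictly upper triangular, hence nilpotent, and $\characteristicPolynomial{\core}(x)=\det(x\identityMatrix{\nrVertices{\core}}-\adjacencyMatrix{\core})=x^{\nrVertices{\core}}$. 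First I would record the block data of $\adjacencyMatrix{\substituent}$. For $\nrVertices{\substituent}\geq3$ the only edges incident to native vertices are $\substituentTail\to\vertex_{1}$ and $\vertex_{\nrVertices{\core}}\to\substituentHead$, so the four corner entries $[\adjacencyMatrix{\substituent}]_{\substituentTail\substituentTail}$, $[\adjacencyMatrix{\substituent}]_{\substituentHead\substituentHead}$, $[\adjacencyMatrix{\substituent}]_{\substituentTail\substituentHead}$, $[\adjacencyMatrix{\substituent}]_{\substituentHead\substituentTail}$ all vanish, while $\rowVector{\substituentHead\rightarrow}=0$ and $\rowVector{\substituentTail\leftarrow}=0$, and $\rowVector{\substituentTail\rightarrow}=(1,0,\ldots,0)$ and $\rowVector{\substituentHead\leftarrow}=(0,\ldots,0,1)^{T}$ are the first and last standard basis vectors.

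Next I would evaluate the polynomials~(\ref{eq:Polynomial_factors}). Since $\rowVector{\substituentHead\rightarrow}$ and $\rowVector{\substituentTail\leftarrow}$ vanish, the factors $p_{\substituentTail\substituentTail}$, $p_{\substituentHead\substituentHead}$, and $p_{\substituentHead\substituentTail}$ are all zero, and only $p_{\substituentTail\substituentHead}(x)=[\adjointMatrix{x\identityMatrix{\nrVertices{\core}}-\adjacencyMatrix{\core}}]_{1\nrVertices{\core}}$ survives. To compute this single entry I would bypass the cofactor signs via the walk interpretation of the adjugate: from $(x\identityMatrix{\nrVertices{\core}}-\adjacencyMatrix{\core})\adjointMatrix{x\identityMatrix{\nrVertices{\core}}-\adjacencyMatrix{\core}}=\characteristicPolynomial{\core}(x)\identityMatrix{\nrVertices{\core}}$ one gets $\adjointMatrix{x\identityMatrix{\nrVertices{\core}}-\adjacencyMatrix{\core}}=\characteristicPolynomial{\core}(x)\sum_{\ell\geq0}\adjacencyMatrix{\core}^{\ell}x^{-\ell-1}$. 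In the directed path the unique walk from the first to the last core vertex is the path of length $\nrVertices{\core}-1$, so $[\adjacencyMatrix{\core}^{\ell}]_{1\nrVertices{\core}}=1$ precisely when $\ell=\nrVertices{\core}-1$ and vanishes otherwise; hence $p_{\substituentTail\substituentHead}(x)=x^{\nrVertices{\core}}\cdot x^{-\nrVertices{\core}}=1$.

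Finally I would substitute these values into Proposition~\ref{prop:Char_poly_of_invaded_graph}. The coefficients of $\outdegreeMatrix{\graph}$, $\indegreeMatrix{\graph}$, and $\adjacencyMatrix{\graph}^{T}$ all vanish, while the coefficient of $\adjacencyMatrix{\graph}$ equals $[\adjacencyMatrix{\substituent}]_{\substituentTail\substituentHead}\characteristicPolynomial{\core}(x)+p_{\substituentTail\substituentHead}(x)=1$. Using $x\characteristicPolynomial{\core}(x)=x^{\nrVertices{\core}+1}=x^{\nrVertices{\substituent}-1}$ together with $\nrVertices{\core}=\nrVertices{\substituent}-2$, the proposition collapses to $\characteristicPolynomial{\invadedDiraph{\substituent}{\graph}}(x)=x^{(\nrVertices{\substituent}-2)(\nrEdges{\graph}-\nrVertices{\graph})}\det(x^{\nrVertices{\substituent}-1}\identityMatrix{\nrVertices{\graph}}-\adjacencyMatrix{\graph})$, which is the claim. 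The degenerate case $\nrVertices{\substituent}=2$ (a single edge, empty core, $\characteristicPolynomial{\core}=1$, all $p_{\vertex\vertex'}=0$, and $[\adjacencyMatrix{\substituent}]_{\substituentTail\substituentHead}=1$) is immediate, since then $\invadedDiraph{\substituent}{\graph}=\graph$ and the formula reduces to $\characteristicPolynomial{\graph}(x)$. The only genuine computation is the single adjugate entry $p_{\substituentTail\substituentHead}=1$; computed directly via cofactors it would require balancing the sign $(-1)^{1+\nrVertices{\core}}$ against the determinant $(-1)^{\nrVertices{\core}-1}$ of a triangular minor, which is exactly the sign-bookkeeping that the walk-generating-function argument avoids.
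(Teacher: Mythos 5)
Your proposal is correct and follows essentially the same route as the paper: specialize Proposition~\ref{prop:Char_poly_of_invaded_graph} to the directed path, note $\characteristicPolynomial{\core}(x)=x^{\nrVertices{\substituent}-2}$, observe that only $p_{\substituentTail\substituentHead}$ survives and equals $[\adjointMatrix{x\identityMatrix{\nrVertices{\core}}-\adjacencyMatrix{\core}}]_{1\nrVertices{\core}}=1$, and substitute. The only (harmless) difference is that you justify the adjugate entry via the walk-generating-function expansion, where the paper simply asserts its value.
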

\begin{proof}
We may assume that\def\arraystretch{1} \setlength{\arraycolsep}{2pt}
\[
\adjacencyMatrix{\substituent}=\left(\begin{array}{c|ccc|c}
[\adjacencyMatrix{\substituent}]_{\substituentTail\substituentTail} &  & \rowVector{\substituentTail\rightarrow} &  & [\adjacencyMatrix{\substituent}]_{\substituentTail\substituentHead}\\
\hline  &  &  & \\
\rowVector{\substituentTail\leftarrow} & \quad & \adjacencyMatrix{\core} & \quad & \rowVector{\substituentHead\leftarrow}\\
 &  &  & \\
\hline {}[\adjacencyMatrix{\substituent}]_{\substituentHead\substituentTail} &  & \rowVector{\substituentHead\rightarrow} &  & [\adjacencyMatrix{\substituent}]_{\substituentHead\substituentHead}
\end{array}\right)=\left(\begin{array}{c|ccc|c}
0 & 1 &  &  & 0\\
\hline  & 0 & 1 & \\
 & \hphantom{\ddots} & \ddots & \ddots\\
 &  &  & 0 & 1\\
\hline 0 &  &  &  & 0
\end{array}\right).
\]
Thus, $\characteristicPolynomial{\core}(x)=x^{\nrVertices{\core}}=x^{\nrVertices{\substituent}-2}$,
and the polynomials in~(\ref{eq:Polynomial_factors}) are given by
$p_{\substituentTail\substituentTail}(x)=0$, $p_{\substituentHead\substituentHead}(x)=0$,
$p_{\substituentHead\substituentTail}(x)=0$, and $p_{\substituentTail\substituentHead}(x)=[\adjointMatrix{x\identityMatrix{\nrVertices{\core}}-\adjacencyMatrix{\core}}]_{1\nrVertices{\core}}=1$,
which shows the claimed statement.\setlength{\arraycolsep}{\myArraycolsep}\end{proof}
\begin{cor}
\label{cor:Char_poly_of_path_invasion}If $\substituent$ is an undirected
path from $\substituentTail$ to $\substituentHead$ with $\nrVertices{\substituent}$
vertices, then
\[
\begin{aligned}\characteristicPolynomial{\invadedDiraph{\substituent}{\graph}}(x)= & U_{\nrVertices{\substituent}-2}^{\nrEdges{\graph}-\nrVertices{\graph}}(x/2)\det\big(xU_{\nrVertices{\substituent}-2}(x/2)\identityMatrix{\nrVertices{\graph}}\\
 & \hphantom{U_{\nrVertices{\substituent}-2}^{\nrEdges{\graph}-\nrVertices{\graph}}(x/2)\det\big(}-U_{\nrVertices{\substituent}-3}(x/2)\left(\outdegreeMatrix{\graph}+\indegreeMatrix{\graph}\right)-\left(\adjacencyMatrix{\graph}+\adjacencyMatrix{\graph}^{T}\right)\big),
\end{aligned}
\]
where $U_{-1}=0$ and $U_{n\geq0}$ is the $n$th Chebyshev polynomial
of the second kind.\end{cor}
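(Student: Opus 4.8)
The plan is to specialize Proposition~\ref{prop:Char_poly_of_invaded_graph} to the undirected path invader, so the whole argument is bookkeeping of the two ingredients that enter that formula: the core polynomial $\characteristicPolynomial{\core}$ and the four polynomials $p_{\vertex\vertex'}$ of~(\ref{eq:Polynomial_factors}). First I would label the vertices of $\substituent$ as $\substituentTail=1,2,\ldots,\nrVertices{\substituent}=\substituentHead$ along the path, so that $\adjacencyMatrix{\substituent}$ is symmetric tridiagonal with $1$'s on its two off-diagonals. Then the core $\core$ induced on the $\nrVertices{\core}=\nrVertices{\substituent}-2$ interior vertices is again a path, and $\adjacencyMatrix{\core}$ is tridiagonal. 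Expanding $\det(x\identityMatrix{\nrVertices{\core}}-\adjacencyMatrix{\core})$ along its last row gives the recurrence $\characteristicPolynomial{P_n}(x)=x\,\characteristicPolynomial{P_{n-1}}(x)-\characteristicPolynomial{P_{n-2}}(x)$ with $\characteristicPolynomial{P_0}=1$ and $\characteristicPolynomial{P_1}(x)=x$, which is exactly the recurrence obeyed by $U_n(x/2)$; hence $\characteristicPolynomial{\core}(x)=U_{\nrVertices{\substituent}-2}(x/2)$. This already accounts for the outer factor $U_{\nrVertices{\substituent}-2}^{\nrEdges{\graph}-\nrVertices{\graph}}(x/2)=\characteristicPolynomial{\core}^{\nrEdges{\graph}-\nrVertices{\graph}}(x)$ and the leading term $x\,\characteristicPolynomial{\core}(x)\identityMatrix{\nrVertices{\graph}}$.

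Next I would compute the $p_{\vertex\vertex'}$. In a path, $\substituentTail$ is adjacent only to the first interior vertex and $\substituentHead$ only to the last, so the connection vectors are standard basis vectors, $\rowVector{\substituentTail\rightarrow}=\rowVector{\substituentTail\leftarrow}^{T}=e_{1}^{T}$ and $\rowVector{\substituentHead\rightarrow}=\rowVector{\substituentHead\leftarrow}^{T}=e_{\nrVertices{\core}}^{T}$. Substituting these into~(\ref{eq:Polynomial_factors}) collapses each double sum to a single corner entry of the adjugate: writing $N=x\identityMatrix{\nrVertices{\core}}-\adjacencyMatrix{\core}$,
\[
p_{\substituentTail\substituentTail}=[\adjointMatrix N]_{11},\quad p_{\substituentHead\substituentHead}=[\adjointMatrix N]_{\nrVertices{\core}\nrVertices{\core}},\quad p_{\substituentTail\substituentHead}=[\adjointMatrix N]_{1\nrVertices{\core}},\quad p_{\substituentHead\substituentTail}=[\adjointMatrix N]_{\nrVertices{\core}1}.
\]
Since for $\nrVertices{\substituent}\geq3$ all four corner entries $[\adjacencyMatrix{\substituent}]_{\substituentTail\substituentTail},[\adjacencyMatrix{\substituent}]_{\substituentHead\substituentHead},[\adjacencyMatrix{\substituent}]_{\substituentTail\substituentHead},[\adjacencyMatrix{\substituent}]_{\substituentHead\substituentTail}$ vanish (no loops, and $\substituentTail,\substituentHead$ are non-adjacent), the coefficients of $\outdegreeMatrix{\graph},\indegreeMatrix{\graph},\adjacencyMatrix{\graph},\adjacencyMatrix{\graph}^{T}$ in Proposition~\ref{prop:Char_poly_of_invaded_graph} reduce to these four $p$'s alone.

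The remaining, and most delicate, step is to evaluate the corner adjugate entries of the tridiagonal matrix $N$. The diagonal corner $[\adjointMatrix N]_{11}$ is the $(1,1)$ minor, obtained by deleting the first row and column; what survives is the path on $\nrVertices{\core}-1$ vertices, so the entry equals $U_{\nrVertices{\substituent}-3}(x/2)$, and the reflection symmetry of the path gives the same value at the $(\nrVertices{\core},\nrVertices{\core})$ corner. The hard part is the off-diagonal corner $[\adjointMatrix N]_{1\nrVertices{\core}}$: it is the signed minor obtained by deleting the last row and the first column, and here one must observe that the surviving submatrix is lower triangular with $-1$ on its diagonal (only the diagonal-and-below entries of the tridiagonal matrix persist), so its determinant is $(-1)^{\nrVertices{\core}-1}$, which the cofactor sign $(-1)^{1+\nrVertices{\core}}$ cancels to leave exactly $1$; the $(\nrVertices{\core},1)$ corner is identical. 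Plugging $p_{\substituentTail\substituentTail}=p_{\substituentHead\substituentHead}=U_{\nrVertices{\substituent}-3}(x/2)$ and $p_{\substituentTail\substituentHead}=p_{\substituentHead\substituentTail}=1$ into Proposition~\ref{prop:Char_poly_of_invaded_graph} yields the stated identity, the symmetry of the $p$'s being precisely what produces the combinations $\outdegreeMatrix{\graph}+\indegreeMatrix{\graph}$ and $\adjacencyMatrix{\graph}+\adjacencyMatrix{\graph}^{T}$. Finally I would check that the degenerate case $\nrVertices{\substituent}=2$ (a single edge, $\nrVertices{\core}=0$) is covered by the conventions $U_0=1$ and $U_{-1}=0$: there the empty sums force $p_{\substituentTail\substituentHead}=0$ while $[\adjacencyMatrix{\substituent}]_{\substituentTail\substituentHead}=1$, all other corner contributions vanishing, which recovers $\characteristicPolynomial{\invadedDiraph{\substituent}{\graph}}(x)=\det(x\identityMatrix{\nrVertices{\graph}}-\adjacencyMatrix{\graph}-\adjacencyMatrix{\graph}^{T})$ as required.
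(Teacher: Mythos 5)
Your proposal is correct and follows essentially the same route as the paper: specialize Proposition~\ref{prop:Char_poly_of_invaded_graph}, identify $\characteristicPolynomial{\core}(x)=U_{\nrVertices{\substituent}-2}(x/2)$, and reduce the four polynomials of~(\ref{eq:Polynomial_factors}) to the corner entries of $\adjointMatrix{x\identityMatrix{\nrVertices{\core}}-\adjacencyMatrix{\core}}$, namely $U_{\nrVertices{\substituent}-3}(x/2)$ on the diagonal corners and $1$ on the off-diagonal ones. The only difference is that you supply the explicit minor computations (and the $\nrVertices{\substituent}=2$ check) that the paper's proof states without detail.
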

\begin{proof}
It is well-known that an undirected path $P$ with $\nrVertices{}$
vertices has characteristic polynomial $\characteristicPolynomial P(x)=U_{\nrVertices{}}(x/2)$.
Hence, $\characteristicPolynomial{\core}(x)=U_{\nrVertices{\core}}(x/2)=U_{\nrVertices{\substituent}-2}(x/2)$.
The claimed statement now follows from \setlength{\arraycolsep}{2pt}
\[
\begin{array}{llllcllll}
p_{\substituentTail\substituentTail}(x) & = & [\adjointMatrix{x\identityMatrix{\nrVertices{\core}}-\adjacencyMatrix{\core}}]_{11} & = & U_{\nrVertices{\core}-1}(x/2) & = & [\adjointMatrix{x\identityMatrix{\nrVertices{\core}}-\adjacencyMatrix{\core}}]_{\nrVertices{\core}\nrVertices{\core}} & = & p_{\substituentHead\substituentHead}(x),\\
p_{\substituentTail\substituentHead}(x) & = & [\adjointMatrix{x\identityMatrix{\nrVertices{\core}}-\adjacencyMatrix{\core}}]_{1\nrVertices{\core}} & = & 1 & = & [\adjointMatrix{x\identityMatrix{\nrVertices{\core}}-\adjacencyMatrix{\core}}]_{\nrVertices{\core}1} & = & p_{\substituentHead\substituentTail}(x).
\end{array}
\]
\setlength{\arraycolsep}{\myArraycolsep}\end{proof}
\begin{cor}
\label{cor:Zeta_equivalence_and_invasion}If $\graph$ and $\mate{\graph}$
are digraphs with $\nrVertices{\graph}=\nrVertices{\mate{\graph}}$,
then the following are equivalent:
\begin{enumerate}
\item $\generalizedCharacteristicPolynomial{\graph}=\generalizedCharacteristicPolynomial{\mate{\graph}}$,\label{enu:Equal_gen_char_polys_of_digraphs}
\item $\generalizedCharacteristicPolynomial{\invadedDiraph{\substituent}{\graph}}=\generalizedCharacteristicPolynomial{\invadedDiraph{\substituent}{\mate{\graph}}}$
for every invader $\substituent$,\label{enu:Equal_gen_char_polys_of_invaded_digraphs}
\item $\characteristicPolynomial{\invadedDiraph{\substituent}{\graph}}=\characteristicPolynomial{\invadedDiraph{\substituent}{\mate{\graph}}}$
for every invader $\substituent$,\label{enu:Equal_char_polys_of_invaded_digraphs}
\item $\characteristicPolynomial{\invadedDiraph{\substituent}{\graph}}=\characteristicPolynomial{\invadedDiraph{\substituent}{\mate{\graph}}}$
for every invader $\substituent$ with $\adjacencyMatrix{\substituent}\in\mathbb{Z}_{>0}^{2\times2}$.\label{enu:Equal_char_polys_of_2x2_invasions}
\end{enumerate}
\end{cor}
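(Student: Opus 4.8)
The plan is to run the cycle $(\ref{enu:Equal_gen_char_polys_of_digraphs})\Rightarrow(\ref{enu:Equal_char_polys_of_invaded_digraphs})\Rightarrow(\ref{enu:Equal_char_polys_of_2x2_invasions})\Rightarrow(\ref{enu:Equal_gen_char_polys_of_digraphs})$ together with the easy specialization $(\ref{enu:Equal_gen_char_polys_of_invaded_digraphs})\Rightarrow(\ref{enu:Equal_char_polys_of_invaded_digraphs})$ and a separate argument for $(\ref{enu:Equal_gen_char_polys_of_digraphs})\Rightarrow(\ref{enu:Equal_gen_char_polys_of_invaded_digraphs})$, so that all four conditions become equivalent. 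The engine for everything is Proposition~\ref{prop:Char_poly_of_invaded_graph}, which I would first repackage into a single formula expressing $\characteristicPolynomial{\invadedDiraph{\substituent}{\graph}}$ through $\generalizedCharacteristicPolynomial{\graph}$.

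Indeed, the matrix inside the determinant in Proposition~\ref{prop:Char_poly_of_invaded_graph} is a linear combination $\alpha(x)\identityMatrix{\nrVertices{\graph}}+\beta(x)\outdegreeMatrix{\graph}+\gamma(x)\indegreeMatrix{\graph}+\delta(x)\adjacencyMatrix{\graph}+\epsilon(x)\adjacencyMatrix{\graph}^{T}$ whose five coefficients are polynomials in $x$ built solely from $\substituent$ (through $\characteristicPolynomial{\core}$, the corner entries of $\adjacencyMatrix{\substituent}$, and the data in~(\ref{eq:Polynomial_factors})), with $\alpha(x)=x\characteristicPolynomial{\core}(x)$. By the very definition of $\generalizedCharacteristicPolynomial{\graph}$, that determinant is $\generalizedCharacteristicPolynomial{\graph}\bigl(\alpha(x),\beta(x),\gamma(x),\delta(x),\epsilon(x)\bigr)$, so
\[
\characteristicPolynomial{\invadedDiraph{\substituent}{\graph}}(x)=\characteristicPolynomial{\core}^{\nrEdges{\graph}-\nrVertices{\graph}}(x)\,\generalizedCharacteristicPolynomial{\graph}\bigl(\alpha(x),\beta(x),\gamma(x),\delta(x),\epsilon(x)\bigr).
\]
Reading off the coefficient of $\variableCBC{\forward}{}x^{\nrVertices{\graph}-1}$ in $\generalizedCharacteristicPolynomial{\graph}$ recovers $\mathrm{tr}(\outdegreeMatrix{\graph})=\nrEdges{\graph}$, so $(\ref{enu:Equal_gen_char_polys_of_digraphs})$ forces $\nrEdges{\graph}=\nrEdges{\mate{\graph}}$ in addition to the assumed $\nrVertices{\graph}=\nrVertices{\mate{\graph}}$. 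Hence both factors on the right depend on $\graph$ only through $\generalizedCharacteristicPolynomial{\graph}$, $\nrVertices{\graph}$, and $\nrEdges{\graph}$, giving $(\ref{enu:Equal_gen_char_polys_of_digraphs})\Rightarrow(\ref{enu:Equal_char_polys_of_invaded_digraphs})$. The two easy steps follow at once: $(\ref{enu:Equal_gen_char_polys_of_invaded_digraphs})\Rightarrow(\ref{enu:Equal_char_polys_of_invaded_digraphs})$ is the specialization $(\variableCBC{\forward}{},\variableCBC{\backward}{},\variableL{\forward},\variableL{\backward})=(0,0,-1,0)$, and $(\ref{enu:Equal_char_polys_of_invaded_digraphs})\Rightarrow(\ref{enu:Equal_char_polys_of_2x2_invasions})$ merely restricts the class of invaders.

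For $(\ref{enu:Equal_char_polys_of_2x2_invasions})\Rightarrow(\ref{enu:Equal_gen_char_polys_of_digraphs})$ I would specialize the master formula to an invader $\substituent$ with $\adjacencyMatrix{\substituent}=\left(\begin{array}{cc}a & b\\c & d\end{array}\right)\in\mathbb{Z}_{>0}^{2\times2}$. Then $\core$ is empty, $\characteristicPolynomial{\core}=1$, every $p_{\vertex\vertex'}$ vanishes, and the formula collapses to
\[
\characteristicPolynomial{\invadedDiraph{\substituent}{\graph}}(x)=\det(x\identityMatrix{\nrVertices{\graph}}-a\outdegreeMatrix{\graph}-d\indegreeMatrix{\graph}-b\adjacencyMatrix{\graph}-c\adjacencyMatrix{\graph}^{T})=\generalizedCharacteristicPolynomial{\graph}(x,-a,-d,-b,-c).
\]
Thus condition $(\ref{enu:Equal_char_polys_of_2x2_invasions})$ says that the polynomial $\generalizedCharacteristicPolynomial{\graph}-\generalizedCharacteristicPolynomial{\mate{\graph}}$ vanishes at $(x,-a,-d,-b,-c)$ for all $a,b,c,d\in\mathbb{Z}_{>0}$ and all $x$. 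As a polynomial that vanishes on a product of infinite sets (namely $\mathbb{C}$ and four copies of $\mathbb{Z}_{<0}$) it must be identically zero, which is $(\ref{enu:Equal_gen_char_polys_of_digraphs})$.

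The one genuinely structural step is $(\ref{enu:Equal_gen_char_polys_of_digraphs})\Rightarrow(\ref{enu:Equal_gen_char_polys_of_invaded_digraphs})$, and I expect the associativity of invasion to be the only real obstacle. Having already established $(\ref{enu:Equal_gen_char_polys_of_digraphs})\Leftrightarrow(\ref{enu:Equal_char_polys_of_2x2_invasions})$ for arbitrary pairs of equal order, I apply this equivalence to the pair $\invadedDiraph{\substituent}{\graph},\invadedDiraph{\substituent}{\mate{\graph}}$, which have the same order and size by the count above. Proving $\generalizedCharacteristicPolynomial{\invadedDiraph{\substituent}{\graph}}=\generalizedCharacteristicPolynomial{\invadedDiraph{\substituent}{\mate{\graph}}}$ then reduces to checking $\characteristicPolynomial{\invadedDiraph{\substituent'}{(\invadedDiraph{\substituent}{\graph})}}=\characteristicPolynomial{\invadedDiraph{\substituent'}{(\invadedDiraph{\substituent}{\mate{\graph}})}}$ for every $2\times2$ invader $\substituent'$. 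Replacing every edge of $\invadedDiraph{\substituent}{\graph}$ by $\substituent'$ is the same as replacing every edge of $\graph$ by the composite invader $\invadedDiraph{\substituent'}{\substituent}$, so $\invadedDiraph{\substituent'}{(\invadedDiraph{\substituent}{\graph})}=\invadedDiraph{(\invadedDiraph{\substituent'}{\substituent})}{\graph}$ and likewise for $\mate{\graph}$. Since $\invadedDiraph{\substituent'}{\substituent}$ is again an invader, the implication $(\ref{enu:Equal_gen_char_polys_of_digraphs})\Rightarrow(\ref{enu:Equal_char_polys_of_invaded_digraphs})$ already proved for $\graph,\mate{\graph}$ delivers exactly this equality. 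The point that needs to be checked with care is the displayed associativity, namely that in the iterated invasion the invasive vertices introduced on the edges of one copy of $\substituent$ are never shared with another copy, so that only the vertices of $\graph$ are glued across edge-copies.
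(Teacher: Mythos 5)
Your proposal is correct and follows essentially the same route as the paper: both rest on Proposition~\ref{prop:Char_poly_of_invaded_graph} read as evaluating $\generalizedCharacteristicPolynomial{\graph}$ at $\substituent$-dependent polynomial arguments, the reduction of the $2\times2$ case to determining $\generalizedCharacteristicPolynomial{\graph}$ from its values on $\mathbb{C}\times\mathbb{Z}_{>0}^{4}$, and the isomorphism $\invadedDiraph{\mate{\substituent}}{(\invadedDiraph{\substituent}{\graph})}\cong\invadedDiraph{(\invadedDiraph{\mate{\substituent}}{\substituent})}{\graph}$ for the structural step. The only difference is cosmetic: you derive (\ref{enu:Equal_gen_char_polys_of_invaded_digraphs}) from (\ref{enu:Equal_gen_char_polys_of_digraphs}) where the paper derives it from (\ref{enu:Equal_char_polys_of_invaded_digraphs}), which is logically interchangeable once the other equivalences are in place.
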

\begin{proof}
Since $\generalizedCharacteristicPolynomial{\graph}(x,\variableCBC{}{},0,0,0)=\det(x\identityMatrix{\nrVertices{\graph}}+\variableCBC{}{}\outdegreeMatrix{\graph})$
determines $\nrEdges{\graph}$, Proposition~\ref{prop:Char_poly_of_invaded_graph}
gives (\ref{enu:Equal_gen_char_polys_of_digraphs})$\Rightarrow$(\ref{enu:Equal_char_polys_of_invaded_digraphs}).
As (\ref{enu:Equal_gen_char_polys_of_invaded_digraphs})$\Rightarrow$(\ref{enu:Equal_char_polys_of_invaded_digraphs})$\Rightarrow$(\ref{enu:Equal_char_polys_of_2x2_invasions}),
it thus suffices to show that~(\ref{enu:Equal_char_polys_of_2x2_invasions})$\Rightarrow$(\ref{enu:Equal_gen_char_polys_of_digraphs})
and (\ref{enu:Equal_char_polys_of_invaded_digraphs})$\Rightarrow$(\ref{enu:Equal_gen_char_polys_of_invaded_digraphs}).
If $\adjacencyMatrix{\substituent}\in\mathbb{Z}_{>0}^{2\times2}$,
then $\adjacencyMatrix{\invadedDiraph{\substituent}{\graph}}=\adjacencyMatrix{\substituentTail\leftrightarrow\substituentHead}$
in~(\ref{eq:Adjacency_matrix_of_invaded_graph}) so that (\ref{eq:First_block})
gives 
\[
\characteristicPolynomial{\invadedDiraph{\substituent}{\graph}}(x)=\det(x\identityMatrix{\nrVertices{\graph}}-[\adjacencyMatrix{\substituent}]_{\substituentTail t}\outdegreeMatrix{\graph}-[\adjacencyMatrix{\substituent}]_{h\substituentHead}\indegreeMatrix{\graph}-[\adjacencyMatrix{\substituent}]_{\substituentTail\substituentHead}\adjacencyMatrix{\graph}-[\adjacencyMatrix{\substituent}]_{\substituentHead\substituentTail}\adjacencyMatrix{\graph}^{T}).
\]
As the polynomial $\generalizedCharacteristicPolynomial{\graph}$
is determined by its values on $\mathbb{C}\times\mathbb{Z}_{>0}^{4}$,
it follows that~(\ref{enu:Equal_char_polys_of_2x2_invasions})$\Rightarrow$(\ref{enu:Equal_gen_char_polys_of_digraphs}).
We finish by showing that~(\ref{enu:Equal_char_polys_of_invaded_digraphs})$\Rightarrow$(\ref{enu:Equal_gen_char_polys_of_invaded_digraphs}).
If $\substituent$ are $\mate{\substituent}$ are invaders, and if
the first and last vertex of $\invadedDiraph{\mate{\substituent}}{\substituent}$
are chosen to be the first and last one of $\substituent$, then $\invadedDiraph{\mate{\substituent}}{(\invadedDiraph{\substituent}{\graph})}$
and $\invadedDiraph{\invadedDiraph{(\mate{\substituent}}{\substituent)}}{\graph}$
are isomorphic graphs. Thus, if~(\ref{enu:Equal_char_polys_of_invaded_digraphs})
holds, then for any fixed $\substituent$ and all $\mate{\substituent}$
with $\adjacencyMatrix{\mate{\substituent}}\in\mathbb{Z}_{>0}^{2\times2}$,
\[
\characteristicPolynomial{\invadedDiraph{\mate{\substituent}}{(\invadedDiraph{\substituent}{\graph})}}=\characteristicPolynomial{\invadedDiraph{\invadedDiraph{(\mate{\substituent}}{\substituent)}}{\graph}}=\characteristicPolynomial{\invadedDiraph{\invadedDiraph{(\mate{\substituent}}{\substituent)}}{\mate{\graph}}}=\characteristicPolynomial{\invadedDiraph{\mate{\substituent}}{(\invadedDiraph{\substituent}{\mate{\graph}})}},
\]
which, by virtue of the implication (\ref{enu:Equal_char_polys_of_2x2_invasions})$\Rightarrow$(\ref{enu:Equal_gen_char_polys_of_digraphs}),
shows that (\ref{enu:Equal_char_polys_of_invaded_digraphs})$\Rightarrow$(\ref{enu:Equal_gen_char_polys_of_invaded_digraphs}).\end{proof}
\begin{defn}
An invader $\substituent$ is called symmetric if it has an automorphism
that interchanges its native vertices. If $\graph$ is a graph without
loops and $\substituent$ is symmetric, then the symmetrically invaded
digraph $\invadedGraph{\substituent}{\graph}$ is obtained by replacing
each undirected edge $\{(\vertex,\vertex'),(\vertex',\vertex)\}$
of $\graph$ with a copy of $\substituent$, where $\vertex$ and
$\vertex'$ are identified with the first and last vertex of $\substituent$,
respectively.
\end{defn}
Symmetric invasions are well-defined up to isomorphism. If $\substituent$
is a symmetric invader, then $[\adjacencyMatrix{\substituent}]_{\substituentTail t}=[\adjacencyMatrix{\substituent}]_{\substituentHead\substituentHead}$
and $[\adjacencyMatrix{\substituent}]_{\substituentTail\substituentHead}=[\adjacencyMatrix{\substituent}]_{\substituentHead\substituentTail}$
so that $\adjacencyMatrix{\invadedGraph{\substituent}{\graph}}$ takes
block form with $[\adjacencyMatrix{\substituent}]_{\substituentTail t}\degreeMatrix{\graph}+[\adjacencyMatrix{\substituent}]_{\substituentTail\substituentHead}\adjacencyMatrix{\graph}$
as its first block. Proposition~\ref{prop:Char_poly_of_invaded_graph}
and Corollary~\ref{cor:Char_poly_of_path_invasion} simplify as follows
for loop-free graphs $\graph$ with $\nrUndirectedEdges{\graph}=\nrEdges{\graph}/2$
undirected edges.
\begin{prop}
The characteristic polynomial of $\invadedGraph{\substituent}{\graph}$
is given by
\begin{eqnarray*}
\characteristicPolynomial{\invadedGraph{\substituent}{\graph}}(x) & = & \characteristicPolynomial{\core}^{\nrUndirectedEdges{\graph}-\nrVertices{\graph}}(x)\det\Big(x\characteristicPolynomial{\core}(x)\identityMatrix{\nrVertices{\graph}}-\left([\adjacencyMatrix{\substituent}]_{\substituentTail t}\characteristicPolynomial{\core}(x)+p_{\substituentTail\substituentTail}(x)\right)\degreeMatrix{\graph}\\
 &  & \hspace{4.4cm}-\left([\adjacencyMatrix{\substituent}]_{\substituentTail\substituentHead}\characteristicPolynomial{\core}(x)+p_{\substituentTail\substituentHead}(x)\right)\adjacencyMatrix{\graph}\Big).
\end{eqnarray*}
\end{prop}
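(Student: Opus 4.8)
The plan is to rerun the proof of Proposition~\ref{prop:Char_poly_of_invaded_graph} almost verbatim, using that $\graph$ is loop-free---so $\outdegreeMatrix{\graph}=\indegreeMatrix{\graph}=\degreeMatrix{\graph}$ and $\adjacencyMatrix{\graph}^{T}=\adjacencyMatrix{\graph}$---and that a symmetric invasion places exactly one copy of $\substituent$ on each of the $\nrUndirectedEdges{\graph}=\nrEdges{\graph}/2$ undirected edges. First I would write $\adjacencyMatrix{\invadedGraph{\substituent}{\graph}}$ in the block form of~(\ref{eq:Adjacency_matrix_of_invaded_graph}), now with only $\nrUndirectedEdges{\graph}$ diagonal blocks $\adjacencyMatrix{\core}$, one per undirected edge, and with first block $\adjacencyMatrix{\substituentTail\leftrightarrow\substituentHead}=[\adjacencyMatrix{\substituent}]_{\substituentTail t}\degreeMatrix{\graph}+[\adjacencyMatrix{\substituent}]_{\substituentTail\substituentHead}\adjacencyMatrix{\graph}$, as already recorded in the text (using $[\adjacencyMatrix{\substituent}]_{\substituentTail t}=[\adjacencyMatrix{\substituent}]_{\substituentHead\substituentHead}$ and $[\adjacencyMatrix{\substituent}]_{\substituentTail\substituentHead}=[\adjacencyMatrix{\substituent}]_{\substituentHead\substituentTail}$). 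For the undirected edge $\{\vertex,\vertex'\}$ identified with the native pair $(\substituentTail,\substituentHead)$, the corresponding off-diagonal blocks $B_{i},B_{i}'$ again carry the two native rows $\rowVector{\substituentTail\rightarrow},\rowVector{\substituentHead\rightarrow}$ at $\vertex,\vertex'$ (and columns $\rowVector{\substituentTail\leftarrow},\rowVector{\substituentHead\leftarrow}$), exactly as in the unsymmetrised case.

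Next I would apply the determinant identity~(\ref{eq:Char_poly_of_invaded_graph})--(\ref{eq:M_char_poly_of_invaded_graph}) unchanged, obtaining
\[
\characteristicPolynomial{\invadedGraph{\substituent}{\graph}}(x)\,\characteristicPolynomial{\core}^{\nrVertices{\graph}}(x)=\characteristicPolynomial{\core}^{\nrUndirectedEdges{\graph}}(x)\det(M(x)),
\]
with $M(x)$ as in~(\ref{eq:M_char_poly_of_invaded_graph}), the sum now running over the $\nrUndirectedEdges{\graph}$ undirected edges. By~(\ref{eq:Polynomial_factors}) the summand attached to $\{\vertex,\vertex'\}$ has its four entries on $\{\vertex,\vertex'\}\times\{\vertex,\vertex'\}$ equal to $p_{\substituentTail\substituentTail}(x),p_{\substituentTail\substituentHead}(x),p_{\substituentHead\substituentTail}(x),p_{\substituentHead\substituentHead}(x)$, so by the same bookkeeping that produced~(\ref{eq:Native_invase_interaction}) the full sum collapses to $p_{\substituentTail\substituentTail}(x)\degreeMatrix{\graph}+p_{\substituentTail\substituentHead}(x)\adjacencyMatrix{\graph}$---provided $p_{\substituentTail\substituentTail}=p_{\substituentHead\substituentHead}$ and $p_{\substituentTail\substituentHead}=p_{\substituentHead\substituentTail}$. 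Substituting $\adjacencyMatrix{\substituentTail\leftrightarrow\substituentHead}$ and this sum into $M(x)$ and cancelling $\characteristicPolynomial{\core}^{\nrVertices{\graph}}(x)$ then yields the stated formula.

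The hard part will be these two polynomial identities, which are exactly what makes each summand---and hence the whole sum---independent of the orientation chosen for each undirected edge. To prove them I would use the automorphism $\sigma$ of $\substituent$ interchanging $\substituentTail$ and $\substituentHead$: its restriction to the $\nrVertices{\core}$ invasive vertices is a permutation $\pi$ leaving $\adjacencyMatrix{\core}$ invariant, whence every entry of $\adjointMatrix{x\identityMatrix{\nrVertices{\core}}-\adjacencyMatrix{\core}}$ is invariant under $(j,k)\mapsto(\pi(j),\pi(k))$. Edge preservation under $\sigma$ relates the boundary vectors through $\pi$, e.g.\ $[\rowVector{\substituentHead\rightarrow}]_{j}=[\rowVector{\substituentTail\rightarrow}]_{\pi(j)}$ and $[\rowVector{\substituentTail\leftarrow}]_{k}=[\rowVector{\substituentHead\leftarrow}]_{\pi(k)}$ (with the analogous statements for the remaining two vectors), so that reindexing $j,k$ by $\pi$ in~(\ref{eq:Polynomial_factors}) carries the defining sum of $p_{\substituentHead\substituentTail}$ onto that of $p_{\substituentTail\substituentHead}$, and likewise $p_{\substituentHead\substituentHead}$ onto $p_{\substituentTail\substituentTail}$. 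Equivalently, reading~(\ref{eq:Polynomial_factors}) as a weighted count of walks from one native vertex through the core to the other, $\sigma$ is a weight-preserving bijection between the two families of such walks. With these identities in place, the remaining steps are the routine determinant bookkeeping already carried out in Proposition~\ref{prop:Char_poly_of_invaded_graph}.
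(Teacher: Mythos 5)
Your proposal is correct and follows exactly the route the paper intends: the paper states this proposition as a direct simplification of Proposition~\ref{prop:Char_poly_of_invaded_graph} for loop-free graphs, with one copy of $\substituent$ per undirected edge, which is precisely the computation you carry out. Your explicit verification that the automorphism interchanging $\substituentTail$ and $\substituentHead$ forces $p_{\substituentTail\substituentTail}=p_{\substituentHead\substituentHead}$ and $p_{\substituentTail\substituentHead}=p_{\substituentHead\substituentTail}$ (via invariance of the adjugate under the induced permutation of the core) supplies a detail the paper leaves implicit, and it is exactly what makes the sum in~(\ref{eq:Native_invase_interaction}) collapse to $p_{\substituentTail\substituentTail}(x)\degreeMatrix{\graph}+p_{\substituentTail\substituentHead}(x)\adjacencyMatrix{\graph}$ independently of the orientation chosen for each edge.
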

\begin{cor}
\label{cor:Char_poly_of_symmetric_path_invasion}If $\substituent$
is an undirected path from $\substituentTail$ to $\substituentHead$
with $\nrVertices{\substituent}$ vertices, then
\[
\characteristicPolynomial{\invadedGraph{\substituent}{\graph}}(x)=U_{\nrVertices{\substituent}-2}^{\nrUndirectedEdges{\graph}-\nrVertices{\graph}}(x/2)\det\left(xU_{\nrVertices{\substituent}-2}(x/2)\identityMatrix{\nrVertices{\graph}}-U_{\nrVertices{\substituent}-3}(x/2)\degreeMatrix{\graph}-\adjacencyMatrix{\graph}\right).
\]

\end{cor}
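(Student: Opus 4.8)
The plan is to specialize the preceding Proposition to the case that $\substituent$ is an undirected path, reusing verbatim the spectral data of a path that already surfaced in the proof of Corollary~\ref{cor:Char_poly_of_path_invasion}. The symmetric invasion $\invadedGraph{\substituent}{\graph}$ is governed by the same block decomposition as the Proposition, with $\degreeMatrix{\graph}$ replacing the pair $\outdegreeMatrix{\graph},\indegreeMatrix{\graph}$ and $\adjacencyMatrix{\graph}$ replacing the pair $\adjacencyMatrix{\graph},\adjacencyMatrix{\graph}^{T}$. I therefore expect the whole argument to reduce to identifying the two scalar polynomials that multiply $\degreeMatrix{\graph}$ and $\adjacencyMatrix{\graph}$, namely $[\adjacencyMatrix{\substituent}]_{\substituentTail t}\characteristicPolynomial{\core}(x)+p_{\substituentTail\substituentTail}(x)$ and $[\adjacencyMatrix{\substituent}]_{\substituentTail\substituentHead}\characteristicPolynomial{\core}(x)+p_{\substituentTail\substituentHead}(x)$.

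First I would observe that deleting the two native endpoints of a path on $\nrVertices{\substituent}$ vertices leaves the core $\core$ as an undirected path on $\nrVertices{\core}=\nrVertices{\substituent}-2$ vertices. Using the standard fact that an undirected path $P$ on $\nrVertices{}$ vertices has $\characteristicPolynomial P(x)=U_{\nrVertices{}}(x/2)$ (the same identity invoked in Corollary~\ref{cor:Char_poly_of_path_invasion}), this gives $\characteristicPolynomial{\core}(x)=U_{\nrVertices{\substituent}-2}(x/2)$. Next I would evaluate the polynomials from~(\ref{eq:Polynomial_factors}). Because $\substituentTail$ is adjacent only to the first invasive vertex and $\substituentHead$ only to the last, each of the incidence vectors $\rowVector{\substituentTail\rightarrow}$, $\rowVector{\substituentTail\leftarrow}$, $\rowVector{\substituentHead\leftarrow}$ carries a single nonzero entry, so only corner entries of $\adjointMatrix{x\identityMatrix{\nrVertices{\core}}-\adjacencyMatrix{\core}}$ contribute. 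These are precisely the adjugate entries already computed for the directed path: the diagonal entry $[\adjointMatrix{x\identityMatrix{\nrVertices{\core}}-\adjacencyMatrix{\core}}]_{11}=U_{\nrVertices{\substituent}-3}(x/2)$ (deleting the first core vertex leaves a path on $\nrVertices{\substituent}-3$ vertices) and the off-diagonal corner entry $[\adjointMatrix{x\identityMatrix{\nrVertices{\core}}-\adjacencyMatrix{\core}}]_{1\nrVertices{\core}}=1$. Hence $p_{\substituentTail\substituentTail}(x)=U_{\nrVertices{\substituent}-3}(x/2)$ and $p_{\substituentTail\substituentHead}(x)=1$.

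Finally I would record that for a path the native endpoints carry no loop and are non-adjacent, so $[\adjacencyMatrix{\substituent}]_{\substituentTail t}=0$ and $[\adjacencyMatrix{\substituent}]_{\substituentTail\substituentHead}=0$. Substituting into the preceding Proposition, the coefficient of $\degreeMatrix{\graph}$ collapses to $U_{\nrVertices{\substituent}-3}(x/2)$ and the coefficient of $\adjacencyMatrix{\graph}$ to $1$, while the prefactor becomes $U_{\nrVertices{\substituent}-2}^{\nrUndirectedEdges{\graph}-\nrVertices{\graph}}(x/2)$; this is exactly the claimed formula. The only point requiring a little care is the evaluation of the corner adjugate entry $[\adjointMatrix{x\identityMatrix{\nrVertices{\core}}-\adjacencyMatrix{\core}}]_{1\nrVertices{\core}}=1$ for the tridiagonal matrix $x\identityMatrix{\nrVertices{\core}}-\adjacencyMatrix{\core}$, together with confirming that the degenerate case $\nrVertices{\substituent}=2$ (empty core, with the convention $U_{-1}=0$ and the direct edge $[\adjacencyMatrix{\substituent}]_{\substituentTail\substituentHead}=1$) still produces the correct coefficient $1$ for $\adjacencyMatrix{\graph}$. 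Since both verifications are identical to the computation already performed for the directed path in Corollary~\ref{cor:Char_poly_of_path_invasion}, I anticipate no genuine obstacle beyond this bookkeeping.
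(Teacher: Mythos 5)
Your proposal is correct and follows essentially the same route as the paper, which obtains this corollary by specializing the symmetric-invasion proposition with $\characteristicPolynomial{\core}(x)=U_{\nrVertices{\substituent}-2}(x/2)$, $p_{\substituentTail\substituentTail}(x)=U_{\nrVertices{\substituent}-3}(x/2)$, $p_{\substituentTail\substituentHead}(x)=1$, and $[\adjacencyMatrix{\substituent}]_{\substituentTail\substituentTail}=[\adjacencyMatrix{\substituent}]_{\substituentTail\substituentHead}=0$, exactly as in Corollary~\ref{cor:Char_poly_of_path_invasion}. The only nit is terminological: the adjugate entries you reuse are those computed for the \emph{undirected} path invader of Corollary~\ref{cor:Char_poly_of_path_invasion}, not for the directed path of the preceding corollary (where $p_{\substituentTail\substituentTail}=0$).
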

The graph $\invadedGraph{\substituent}{\graph}$ in the previous corollary
arises by introducing $\nrVertices{\core}=\nrVertices{\substituent}-2$
additional vertices on each edge of $\graph$, and is therefore known
as the $\nrVertices{\core}$th subdivision graph of $\graph$. In
particular,~\cite{Mnuhin1980} contains an independent proof of Corollary~\ref{cor:Char_poly_of_symmetric_path_invasion}.
Similarly to Corollary~\ref{cor:Zeta_equivalence_and_invasion},
we obtain that the conditions~(\ref{enu:generalized_characteristic_polynomials_equal})
to~(\ref{enu:characteristic_polynomials_of_2_by_2_invasions_equal})
in Theorem~\ref{thm:Zeta_equivalent_graphs} are equivalent to each
other.

\section{Markov chains\label{sec:Markov_chains}}

\noindent Let $\graph$ be an unweighted graph that may have parallel
edges. Let $\degreeSequence{\graph}$ denote its degree sequence,
i.e., the sequence of its vertex degrees arranged in non-increasing
order. Recall that
\[
\generalizedCharacteristicPolynomialOfGraph{\graph}(x,\variableCBC{}{},\variableL{})=\det(x\identityMatrix{\nrVertices{\graph}}+\variableCBC{}{}\degreeMatrix{\graph}+\variableL{}\adjacencyMatrix{\graph})
\]
and
\[
\markovChainCharacteristicPolynomialOfGraphs{\graph}(x,a,b)=\det(x\identityMatrix{\nrVertices{\graph}}+((a+b)\identityMatrix{\nrVertices{\graph}}+\degreeMatrix{\graph})^{-1}(\adjacencyMatrix{\graph}+a\identityMatrix{\nrVertices{\graph}})).
\]

\begin{prop}
\label{prop:Determination-of-degree-sequence}Each of the polynomials
$\generalizedCharacteristicPolynomialOfGraph{\graph}$ and $\markovChainCharacteristicPolynomialOfGraphs{\graph}$
determines $\degreeSequence{\graph}$. If $\graph$ is simple, then
for any $c\in\mathbb{C}$ the two-variable polynomial $\markovChainCharacteristicPolynomialOfGraphs{\graph|b=c}$
determines $\degreeSequence{\graph}$.\end{prop}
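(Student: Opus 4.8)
The plan is to isolate the degree data in each case by a single well-chosen specialization, reserving the real work for the two-variable assertion. For $\generalizedCharacteristicPolynomialOfGraph{\graph}$ I would set the adjacency variable to zero, collapsing it to its degree part:
\[
\generalizedCharacteristicPolynomialOfGraph{\graph}(x,\variableCBC{}{},0)=\det(x\identityMatrix{\nrVertices{\graph}}+\variableCBC{}{}\degreeMatrix{\graph})=\prod_{i=1}^{\nrVertices{\graph}}(x+\variableCBC{}{}d_{i}),
\]
where $\degreeMatrix{\graph}=\mathrm{diag}(d_{1},\dots,d_{\nrVertices{\graph}})$. Putting $\variableCBC{}{}=1$ yields a univariate polynomial with roots $-d_{1},\dots,-d_{\nrVertices{\graph}}$, so the multiset of degrees, hence $\degreeSequence{\graph}$, is read off at once; since $\degreeMatrix{\graph}$ is diagonal this needs no restriction on loops or parallel edges.

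For $\markovChainCharacteristicPolynomialOfGraphs{\graph}$ with all variables free I would specialize $x=0$ and use multiplicativity of the determinant:
\[
\markovChainCharacteristicPolynomialOfGraphs{\graph}(0,a,b)=\det\bigl(((a+b)\identityMatrix{\nrVertices{\graph}}+\degreeMatrix{\graph})^{-1}(\adjacencyMatrix{\graph}+a\identityMatrix{\nrVertices{\graph}})\bigr)=\frac{\det(\adjacencyMatrix{\graph}+a\identityMatrix{\nrVertices{\graph}})}{\prod_{i}(a+b+d_{i})}.
\]
Fixing any $a$ outside the finite zero set of the numerator, the right-hand side is a nonzero constant divided by $\prod_{i}(a+b+d_{i})$, so as a function of $b$ it has poles exactly at $b=-(a+d_{i})$ of order equal to the multiplicity of $d_{i}$, with no possible cancellation. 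The pole locations and orders return $\degreeSequence{\graph}$, again for arbitrary loops and parallel edges.

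The substance lies in the last assertion, where $b=c$ is frozen. Viewing $\markovChainCharacteristicPolynomialOfGraphs{\graph|b=c}$ as a monic polynomial in $x$ of degree $\nrVertices{\graph}$ whose coefficients are rational functions of $a$, I would read off the coefficient of $x^{\nrVertices{\graph}-1}$, namely $\mathrm{tr}\bigl(((a+c)\identityMatrix{\nrVertices{\graph}}+\degreeMatrix{\graph})^{-1}(\adjacencyMatrix{\graph}+a\identityMatrix{\nrVertices{\graph}})\bigr)$. Here simplicity enters: the diagonal of $\adjacencyMatrix{\graph}$ vanishes, so this trace is $\sum_{i}\frac{a}{a+c+d_{i}}$, and collecting vertices by their degree value $\delta$ with multiplicity $m_{\delta}$ gives, by partial fractions,
\[
\sum_{\delta}\frac{a\,m_{\delta}}{a+c+\delta}=\nrVertices{\graph}-\sum_{\delta}\frac{(c+\delta)\,m_{\delta}}{a+c+\delta}.
\]
Thus this coefficient has a simple pole at each $a=-(c+\delta)$ with residue $-(c+\delta)m_{\delta}$, and distinct degree values give distinct poles.

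The hard part, and the only obstacle, is that this residue vanishes exactly when $c+\delta=0$: the single degree value $\delta=-c$ (which can occur only if $-c$ is a nonnegative integer) leaves no trace in the residues. I would remove this one ambiguity by counting. The $x$-degree of $\markovChainCharacteristicPolynomialOfGraphs{\graph|b=c}$ is $\nrVertices{\graph}$, so $\nrVertices{\graph}$ is known; every $m_{\delta}$ with $\delta\neq-c$ is recovered from the residues above; and the last multiplicity is forced by $m_{-c}=\nrVertices{\graph}-\sum_{\delta\neq-c}m_{\delta}$, which also decides whether $-c$ is a degree at all. This pins down $\degreeSequence{\graph}$. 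The computation breaks for graphs with loops, where a vertex of degree $\delta$ contributes $\frac{a+[\adjacencyMatrix{\graph}]_{ii}}{a+c+d_{i}}$: the residues then entangle loop counts with multiplicities and several degree values may become invisible, which is precisely why simplicity is assumed in this last assertion.
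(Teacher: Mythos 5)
Your proof is correct, and it coincides with the paper's argument on two of the three claims while taking a genuinely different route on the middle one. For $\generalizedCharacteristicPolynomialOfGraph{\graph}$ the paper does the same thing you do, killing the adjacency variable and reading the degrees off the roots of $\generalizedCharacteristicPolynomialOfGraph{\graph}(x,-1,0)=\det(x\identityMatrix{\nrVertices{\graph}}-\degreeMatrix{\graph})$. For the unrestricted $\markovChainCharacteristicPolynomialOfGraphs{\graph}$ the paper argues differently: it substitutes $b=1-a$, rescales by $a^{-\nrVertices{\graph}}$, and lets $a\to\infty$ to recover the spectrum of $(\identityMatrix{\nrVertices{\graph}}+\degreeMatrix{\graph})^{-1}$ from the limiting polynomial $\det(x\identityMatrix{\nrVertices{\graph}}+(\identityMatrix{\nrVertices{\graph}}+\degreeMatrix{\graph})^{-1})$. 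Your specialization $x=0$ followed by a pole count in $b$ is more elementary (no limit is needed) and, like the paper's, requires no simplicity; the only cost is discarding the finitely many $a$ with $\det(\adjacencyMatrix{\graph}+a\identityMatrix{\nrVertices{\graph}})=0$, which is harmless since one only ever compares two graphs and can avoid both exceptional sets at once. For the frozen-$b$ claim you and the paper perform the identical computation with the coefficient of $x^{\nrVertices{\graph}-1}$, namely the singular parts of $\sum_i a/(a+c+[\degreeMatrix{\graph}]_{ii})$, and both correctly locate simplicity as the reason the diagonal of $\adjacencyMatrix{\graph}$ drops out of the trace. The one place you go beyond the paper is the degenerate degree value $\delta=-c$, whose residue $-(c+\delta)m_{\delta}$ vanishes: the paper's ``if and only if'' criterion is silently vacuous there, and your repair --- recover $\nrVertices{\graph}$ from the $x$-degree and force the missing multiplicity by subtraction --- is precisely the observation needed to make the statement hold for every $c\in\mathbb{C}$, including the nonpositive integers.
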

\begin{proof}
The diagonal entries of $\degreeMatrix{\graph}$ are the roots of
$\generalizedCharacteristicPolynomialOfGraph{\graph}(x,-1,0)$. As
for $\markovChainCharacteristicPolynomialOfGraphs{\graph}$, we note
that $a^{-\nrVertices{\graph}}\markovChainCharacteristicPolynomialOfGraphs{\graph}(ax,a,1-a)=\det(x\identityMatrix{\nrVertices{\graph}}+(\identityMatrix{\nrVertices{\graph}}+\degreeMatrix{\graph})^{-1}(a^{-1}\adjacencyMatrix{\graph}+\identityMatrix{\nrVertices{\graph}}))$,
which converges to $\det(x\identityMatrix{\nrVertices{\graph}}+(\identityMatrix{\nrVertices{\graph}}+\degreeMatrix{\graph})^{-1})$
as $a\to\infty$. The latter polynomial determines the eigenvalues
of $(\identityMatrix{\nrVertices{\graph}}+\degreeMatrix{\graph})^{-1}$,
which in turn determine $\degreeSequence{\graph}$. If $\graph$ is
a simple graph and $c\in\mathbb{C}$, then the coefficient of $x^{\nrVertices{\graph}-1}$
in $\markovChainCharacteristicPolynomialOfGraphs{\graph}(x,a,c)$
is given by 
\[
f(a)=\mathrm{tr}\left(((a+c)\identityMatrix{\nrVertices{\graph}}+\degreeMatrix{\graph})^{-1}(\adjacencyMatrix{\graph}+a\identityMatrix{\nrVertices{\graph}})\right)=\sum_{i=1}^{\nrVertices{\graph}}\frac{a}{a+c+[\degreeMatrix{\graph}]_{ii}}.
\]
In particular, $\graph$ has $\nrVertices{}$ vertices of degree $d$
if and only if the singular part of $f$ as $a\to-(c+d)$ equals $na(a+c+d)^{-1}$,
which implies the last statement.\end{proof}
\begin{cor}
If $\graph$ and $\mate{\graph}$ are simple graphs with $\nrVertices{\graph}=\nrVertices{\mate{\graph}}$,
then the following are equivalent:\negthinspace{}\negthinspace{}
\begin{enumerate}
\item $\generalizedCharacteristicPolynomialOfGraph{\graph}=\generalizedCharacteristicPolynomialOfGraph{\mate{\graph}}$,
\item $\markovChainCharacteristicPolynomialOfGraphs{\graph}=\markovChainCharacteristicPolynomialOfGraphs{\mate{\graph}}$,
\item $\markovChainCharacteristicPolynomialOfGraphs{\graph|b=c}=\markovChainCharacteristicPolynomialOfGraphs{\mate{\graph}|b=c}$
for some $c\in\mathbb{C}$.
\end{enumerate}
\end{cor}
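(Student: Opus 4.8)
The plan is to pivot the whole argument on a single algebraic identity relating the two polynomials. Factoring the inverse matrix out of the determinant defining the Markov chain function and using multiplicativity of the determinant gives, as rational functions in $x$, $a$, $b$,
\[
\markovChainCharacteristicPolynomialOfGraphs{\graph}(x,a,b)=\frac{\det\big((x(a+b)+a)\identityMatrix{\nrVertices{\graph}}+x\degreeMatrix{\graph}+\adjacencyMatrix{\graph}\big)}{\det\big((a+b)\identityMatrix{\nrVertices{\graph}}+\degreeMatrix{\graph}\big)}=\frac{\generalizedCharacteristicPolynomialOfGraph{\graph}(x(a+b)+a,x,1)}{\generalizedCharacteristicPolynomialOfGraph{\graph}(a+b,1,0)},
\]
the middle equality because $x\big((a+b)\identityMatrix{\nrVertices{\graph}}+\degreeMatrix{\graph}\big)+\adjacencyMatrix{\graph}+a\identityMatrix{\nrVertices{\graph}}=(x(a+b)+a)\identityMatrix{\nrVertices{\graph}}+x\degreeMatrix{\graph}+\adjacencyMatrix{\graph}$. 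Thus $\markovChainCharacteristicPolynomialOfGraphs{\graph}$ is nothing but a quotient of two specializations of $\generalizedCharacteristicPolynomialOfGraph{\graph}$.

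Given this identity, the cycle of implications is (1)$\Rightarrow$(2)$\Rightarrow$(3)$\Rightarrow$(1). The step (1)$\Rightarrow$(2) is immediate, since both numerator and denominator above are determined by $\generalizedCharacteristicPolynomialOfGraph{\graph}$; and (2)$\Rightarrow$(3) is trivial, as $\markovChainCharacteristicPolynomialOfGraphs{\graph}=\markovChainCharacteristicPolynomialOfGraphs{\mate{\graph}}$ specializes at $b=c$ for every $c$. The content is therefore entirely in closing the loop with (3)$\Rightarrow$(1): recovering the full two-parameter polynomial $\generalizedCharacteristicPolynomialOfGraph{\graph}$ from the equality of the specialized functions $\markovChainCharacteristicPolynomialOfGraphs{\graph|b=c}$ and $\markovChainCharacteristicPolynomialOfGraphs{\mate{\graph}|b=c}$ at a single value $c$.

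For that implication I would first invoke Proposition~\ref{prop:Determination-of-degree-sequence}: since $\graph$ and $\mate{\graph}$ are simple, the two-variable polynomial $\markovChainCharacteristicPolynomialOfGraphs{\graph|b=c}$ determines $\degreeSequence{\graph}$, so $\degreeSequence{\graph}=\degreeSequence{\mate{\graph}}$ and in particular the denominators coincide, $\generalizedCharacteristicPolynomialOfGraph{\graph}(a+c,1,0)=\det\big((a+c)\identityMatrix{\nrVertices{\graph}}+\degreeMatrix{\graph}\big)=\generalizedCharacteristicPolynomialOfGraph{\mate{\graph}}(a+c,1,0)$. This common denominator is a nonzero polynomial in $a$, so clearing it in the identity above promotes the hypothesis $\markovChainCharacteristicPolynomialOfGraphs{\graph|b=c}=\markovChainCharacteristicPolynomialOfGraphs{\mate{\graph}|b=c}$ to the polynomial identity $\generalizedCharacteristicPolynomialOfGraph{\graph}(x(a+c)+a,x,1)=\generalizedCharacteristicPolynomialOfGraph{\mate{\graph}}(x(a+c)+a,x,1)$ in $x$ and $a$.

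Finally I would recover the whole polynomial by a change of variables together with homogeneity. Writing $U=x$ and reading $X=x(a+c)+a=(U+1)a+cU$ as an affine function of $a$ with slope $U+1$, for every fixed $U\neq-1$ the value $X$ sweeps out all of $\mathbb{C}$; hence $\generalizedCharacteristicPolynomialOfGraph{\graph}(X,U,1)=\generalizedCharacteristicPolynomialOfGraph{\mate{\graph}}(X,U,1)$ on the Zariski-dense set $\{U\neq-1\}$, and therefore identically. Because $\generalizedCharacteristicPolynomialOfGraph{\graph}$ is homogeneous of degree $\nrVertices{\graph}$, its slice at third argument $1$ determines it on the whole domain, and so $\generalizedCharacteristicPolynomialOfGraph{\graph}=\generalizedCharacteristicPolynomialOfGraph{\mate{\graph}}$. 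The main obstacle is not any single deep step but the interplay of the two auxiliary facts: one must verify that the substitution $(x,a)\mapsto(x(a+c)+a,x)$ reaches a dense set of arguments, and that Proposition~\ref{prop:Determination-of-degree-sequence} legitimately equalizes the denominators, for without matching degree sequences the equality of rational functions could not be cleared to an equality of polynomials.
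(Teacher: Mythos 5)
Your proposal is correct and follows essentially the same route as the paper: the identity $\det((a+b)I_{n_G}+D_G)\,\graphVersion{\mu}_G(x,a,b)=\graphVersion{\eta}_G(a+xa+xb,x,1)$, Proposition~\ref{prop:Determination-of-degree-sequence} to equalize the degree sequences (hence the denominators), and homogeneity of $\graphVersion{\eta}_G$ to recover the full polynomial. You merely spell out the change-of-variables and density argument that the paper leaves implicit.
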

\begin{proof}
In any of the three cases, we have $\degreeSequence{\graph}=\degreeSequence{\mate{\graph}}$
by virtue of Proposition~\ref{prop:Determination-of-degree-sequence}.
In particular, $f(a,b)=\det((a+b)\identityMatrix{\nrVertices{\graph}}+\degreeMatrix{\graph})=\det((a+b)\identityMatrix{\nrVertices{\mate{\graph}}}+\degreeMatrix{\mate{\graph}})$.
The claimed equivalences now follow from the homogeneity of $\generalizedCharacteristicPolynomialOfGraph{\graph}$
and $\generalizedCharacteristicPolynomialOfGraph{\mate{\graph}}$
as well as the identity
\[
f(a,b)\markovChainCharacteristicPolynomialOfGraphs{\graph}(x,a,b)=\det((a+xa+xb)\identityMatrix{\nrVertices{\graph}}+x\degreeMatrix{\graph}+\adjacencyMatrix{\graph})=\generalizedCharacteristicPolynomialOfGraph{\graph}(a+xa+xb,x,1).
\]

\end{proof}

\section{Digraph switchings\label{sec:Digraph-switchings}}

\noindent Let $\graph$ be a simple digraph. Its complement $\complementDiGraph{\graph}$
is the digraph given by $\adjacencyMatrix{\complementDiGraph{\graph}}=\allOnesMatrix{\nrVertices{\graph}}-\identityMatrix{\nrVertices{\graph}}-\adjacencyMatrix{\graph}$,
where $\allOnesMatrix{\nrVertices{\graph}}=\{1\}^{\nrVertices{\graph}\times\nrVertices{\graph}}$
denotes the all-ones matrix.
\begin{prop}
\label{prop:J_cospectrality}The completely generalized characteristic
polynomial\textup{
\[
\generalizedCharacteristicPolynomial{\graph}^{\mathrm{c}}(x,y,\variableCBC{\forward}{},\variableCBC{\backward}{},\variableL{\forward},\variableL{\backward})=\det(x\identityMatrix{\nrVertices{\graph}}+y\allOnesMatrix{\nrVertices{\graph}}+\variableCBC{\forward}{}\outdegreeMatrix{\graph}+\variableCBC{\backward}{}\indegreeMatrix{\graph}+\variableL{\forward}\adjacencyMatrix{\graph}+\variableL{\backward}\adjacencyMatrix{\graph}^{T})
\]
}has $y$-degree one. In particular, zeta-equivalent simple digraphs
with zeta-equivalent complements have the same completely generalized
characteristic polynomial.\end{prop}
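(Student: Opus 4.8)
The plan is to treat the two assertions separately. For the $y$-degree claim I would write the all-ones matrix as a rank-one product $\allOnesMatrix{\nrVertices{\graph}} = \mathbf{1}\mathbf{1}^{T}$, where $\mathbf{1}$ is the all-ones column vector, and set
\[
M = x\identityMatrix{\nrVertices{\graph}} + \variableCBC{\forward}{}\outdegreeMatrix{\graph} + \variableCBC{\backward}{}\indegreeMatrix{\graph} + \variableL{\forward}\adjacencyMatrix{\graph} + \variableL{\backward}\adjacencyMatrix{\graph}^{T},
\]
so that $\det M = \generalizedCharacteristicPolynomial{\graph}$. The rank-one determinant expansion $\det(M + y\mathbf{1}\mathbf{1}^{T}) = \det M + y\,\mathbf{1}^{T}\adjointMatrix{M}\mathbf{1}$ then shows that $\generalizedCharacteristicPolynomial{\graph}^{\mathrm{c}}$ is linear in $y$, with constant term $\generalizedCharacteristicPolynomial{\graph}$ and linear coefficient $\phi_{\graph} := \mathbf{1}^{T}\adjointMatrix{M}\mathbf{1}$, both polynomials in $(x,\variableCBC{\forward}{},\variableCBC{\backward}{},\variableL{\forward},\variableL{\backward})$.

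For the second assertion it thus suffices to show that the two hypotheses force both the constant and the linear coefficient to agree for $\graph$ and $\mate{\graph}$. The constant term is immediate: by condition~(2) of Theorem~\ref{thm:Zeta_equivalent_digraphs}, zeta-equivalence of $\graph$ and $\mate{\graph}$ means $\generalizedCharacteristicPolynomial{\graph} = \generalizedCharacteristicPolynomial{\mate{\graph}}$, that is, $\det M_{\graph} = \det M_{\mate{\graph}}$. The real content lies in the linear coefficient $\phi_{\graph}$, and the key step is to recover it from the generalized characteristic polynomial of the complement.

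Since $\graph$ is simple, $\adjacencyMatrix{\complementDiGraph{\graph}} = \allOnesMatrix{\nrVertices{\graph}} - \identityMatrix{\nrVertices{\graph}} - \adjacencyMatrix{\graph}$, together with $\outdegreeMatrix{\complementDiGraph{\graph}} = (\nrVertices{\graph}-1)\identityMatrix{\nrVertices{\graph}} - \outdegreeMatrix{\graph}$ and $\indegreeMatrix{\complementDiGraph{\graph}} = (\nrVertices{\graph}-1)\identityMatrix{\nrVertices{\graph}} - \indegreeMatrix{\graph}$. Substituting these into $M_{\complementDiGraph{\graph}}$ and collecting terms, I expect to obtain
\[
M_{\complementDiGraph{\graph}} = \widetilde{M}_{\graph} + y'\,\allOnesMatrix{\nrVertices{\graph}},\qquad y' = \variableL{\forward} + \variableL{\backward},
\]
where $\widetilde{M}_{\graph}$ is $M_{\graph}$ evaluated at the arguments obtained by negating each of $\variableCBC{\forward}{},\variableCBC{\backward}{},\variableL{\forward},\variableL{\backward}$ and replacing $x$ by $x + (\nrVertices{\graph}-1)(\variableCBC{\forward}{}+\variableCBC{\backward}{}) - (\variableL{\forward}+\variableL{\backward})$; call this invertible affine substitution $\sigma$. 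Applying the rank-one expansion once more gives
\[
\generalizedCharacteristicPolynomial{\complementDiGraph{\graph}} = \det M_{\complementDiGraph{\graph}} = (\generalizedCharacteristicPolynomial{\graph}\circ\sigma) + y'\,(\phi_{\graph}\circ\sigma),
\]
using $\widetilde{M}_{\graph} = M_{\graph}\circ\sigma$ and the fact that $\mathbf{1}^{T}\adjointMatrix{\cdot}\mathbf{1}$ commutes with substitution of the arguments.

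Finally I would combine the hypotheses. Zeta-equivalence of the complements gives $\generalizedCharacteristicPolynomial{\complementDiGraph{\graph}} = \generalizedCharacteristicPolynomial{\complementDiGraph{\mate{\graph}}}$, while zeta-equivalence of $\graph,\mate{\graph}$ gives $\generalizedCharacteristicPolynomial{\graph}\circ\sigma = \generalizedCharacteristicPolynomial{\mate{\graph}}\circ\sigma$. Subtracting the last displayed identity for $\graph$ from the one for $\mate{\graph}$ yields $y'\,(\phi_{\graph}\circ\sigma) = y'\,(\phi_{\mate{\graph}}\circ\sigma)$ as an identity of polynomials; since the polynomial ring is an integral domain and $y' = \variableL{\forward}+\variableL{\backward}\neq 0$, I cancel $y'$ to obtain $\phi_{\graph}\circ\sigma = \phi_{\mate{\graph}}\circ\sigma$, and then precompose with $\sigma^{-1}$ to conclude $\phi_{\graph} = \phi_{\mate{\graph}}$. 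Together with $\det M_{\graph} = \det M_{\mate{\graph}}$ this gives $\generalizedCharacteristicPolynomial{\graph}^{\mathrm{c}} = \generalizedCharacteristicPolynomial{\mate{\graph}}^{\mathrm{c}}$. The main obstacle is purely bookkeeping: verifying the complement identity $M_{\complementDiGraph{\graph}} = \widetilde{M}_{\graph} + y'\allOnesMatrix{\nrVertices{\graph}}$, which relies on simplicity and on the shared vertex count $\nrVertices{\graph}=\nrVertices{\mate{\graph}}$, and ensuring that the cancellation of $y'$ and the use of $\sigma^{-1}$ are carried out as genuine polynomial identities rather than merely pointwise.
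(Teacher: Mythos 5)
Your proposal is correct and follows essentially the same route as the paper: both rest on the rank-one nature of $\allOnesMatrix{\nrVertices{\graph}}$ to get affineness in $y$ (the paper diagonalizes $\allOnesMatrix{\nrVertices{\graph}}$, you use the rank-one update $\det(M+y\mathbf{1}\mathbf{1}^{T})=\det M+y\,\mathbf{1}^{T}\adjointMatrix{M}\mathbf{1}$), and both recover the remaining $y$-linear coefficient from $\generalizedCharacteristicPolynomial{\complementDiGraph{\graph}}$ via the complement substitution. Your version is simply a more explicit rendering of the paper's ``determined by its values at $0$ and $1$'' step; the only cosmetic omission is that to get $y$-degree exactly one (not merely at most one) you should note $\phi_{\graph}=\nrVertices{\graph}x^{\nrVertices{\graph}-1}+\ldots\neq0$, which the paper handles by evaluating at $(1,y,0,0,0,0)$.
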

\begin{proof}
The main idea goes back to~\cite{JohnsonNewman1980}. If $U$ diagonalizes
the rank-one matrix $\allOnesMatrix{\nrVertices{\graph}}$, then the
conjugate $U(x\identityMatrix{\nrVertices{\graph}}+y\allOnesMatrix{\nrVertices{\graph}}+\variableCBC{\forward}{}\outdegreeMatrix{\graph}+\variableCBC{\backward}{}\indegreeMatrix{\graph}+\variableL{\forward}\adjacencyMatrix{\graph}+\variableL{\backward}\adjacencyMatrix{\graph}^{T})U^{-1}$
has exactly one $y$-dependent entry. Moreover, $\generalizedCharacteristicPolynomial{\graph}^{\mathrm{c}}(1,y,0,0,0,0)=1+\nrVertices{\graph}y$,
which shows the first statement. In~particular, if $x,\variableCBC{\forward}{},\variableCBC{\backward}{},\variableL{\forward}$,
and $\variableL{\backward}$ are given, then the affine function $y\mapsto\generalizedCharacteristicPolynomial{\graph}^{\mathrm{c}}(x,y,\variableCBC{\forward}{},\variableCBC{\backward}{},\variableL{\forward},\variableL{\backward})$
is determined by its values at $0$ and $1$, which in turn are determined
by $\generalizedCharacteristicPolynomial{\graph}$ and $\generalizedCharacteristicPolynomial{\complementDiGraph{\graph}}$.
\end{proof}
We present a method for constructing digraphs as in the second part
of Proposition~\ref{prop:J_cospectrality}. Let $\vertices{\graph}$
denote the vertex set of~$\graph$. If $\vertex,\vertex'\in\vertices{\graph}$
and $\vParts{},\vParts{}'\subseteq\vertices{\graph}$, let $\nrEdgeFromXtoY{\vParts{}}{\vParts{}'}$
denote the number of edges from $\vParts{}$ to $\vParts{}'$, and
let $\nrEdgeFromXtoY{\vertex}{\vParts{}'}=\nrEdgeFromXtoY{\{\vertex\}}{\vParts{}'}$,
$\nrEdgeFromXtoY{\vParts{}}{\vertex'}=\nrEdgeFromXtoY{\vParts{}}{\{\vertex'\}}$,
and $\nrEdgeFromXtoY{\vertex}{\vertex'}=\nrEdgeFromXtoY{\{\vertex\}}{\{\vertex'\}}$.
Following~\cite{Schwenk1974}, a partition $\vertices{\graph}=\vParts 1\sqcup\vParts 2\sqcup\ldots\sqcup\vParts{\nrVParts}$
is said to be equitable if for every $i,j\in\{1,\ldots,\nrVParts\}$
and every $\vertex,\vertex'\in\vParts i$, we have $\nrEdgeFromXtoY{\vertex}{\vParts j}=\nrEdgeFromXtoY{\vertex'}{\vParts j}$
as well as $\nrEdgeFromXtoY{\vParts j}{\vertex}=\nrEdgeFromXtoY{\vParts j}{\vertex'}$.
An~equitable partition corresponds to a numbering of $\vertices{\graph}$
with respect to which $\adjacencyMatrix{\graph}$ takes block form
with blocks that have constant row sums and constant column sums.
If $\vParts{},\vParts{}'\subseteq\vertices{\graph}$, we write $\unlinked{\vParts{}}{\vParts{}'}$,
$\halflinked{\vParts{}}{\vParts{}'}$, or $\fullylinked{\vParts{}}{\vParts{}'}$
to indicate that $\vParts{}$ is unlinked, half-linked, or fully linked
to $\vParts{}'$, meaning that for every $\vertex\in\vParts{}$, we
have $\nrEdgeFromXtoY{\vertex}{\vParts{}'}=0$, $\frac{1}{2}|\vParts{}'|$,
or $|\vParts{}'|$, respectively. Whenever $\vParts{}=\{v\}$ or $\vParts{}'=\{\vertex'\}$,
we omit braces as above. Lastly, if $v\in\vertices{\graph}$ is half-linked
to~$\vParts{}$, i.e., if $\halflinked{\vertex}{\vParts{}}$, then
switching of $(\vertex,\vParts{})$ means replacing the $\frac{1}{2}|\vParts{}|$
existing edges from $\vertex$ to $\vParts{}$ by the $\frac{1}{2}|\vParts{}|$
non-existing ones, and similarly for switching of $(\vParts{},\vertex)$.
\begin{thm}
\label{thm:switching}Let $\graph$ be a simple digraph with $\vertices{\graph}=\vParts{}\sqcup\vParts{}'\sqcup\wParts{}\sqcup\xPart$
such that the subdigraphs induced by $\vParts{}$ and $\vParts{}'$
feature an isomorphism $\subgraphIsomorphism\colon\vParts{}\overset{\sim}{\to}\vParts{}'$,
the subdigraphs induced by $\vParts{}$, $\vParts{}'$, and $\wParts{}$
have equitable partitions, namely, $\vParts{}=\bigsqcup_{i=1}^{\nrVParts}\vParts i$,
$\vParts{}'=\bigsqcup_{i=1}^{\nrVParts}\vParts i'$ with $\vParts i'=\subgraphIsomorphism(\vParts i)$,
and $\wParts{}=\bigsqcup_{k=1}^{\nrWParts}\wParts k$, and for every
\textup{$i,j\in\{1,\ldots,\nrVParts\}$,} $k\in\{1,\ldots,\nrWParts\}$,
and $\xertex\in\xPart$,
\begin{enumerate}
\item $\unlinked{\xertex}{\vParts i}$ or $\fullylinked{\xertex}{\vParts i}$,
and $\unlinked{\vParts i}{\xertex}$ or $\fullylinked{\vParts i}{\xertex}$,\label{enu:edges_between_V_and_X}
\item $\unlinked{\xertex}{\vParts i'}$ or $\fullylinked{\xertex}{\vParts i'}$,
and $\unlinked{\vParts i'}{\xertex}$ or $\fullylinked{\vParts i'}{\xertex}$,
\label{enu:edges_between_VPrime_and_X}
\item there exists $\difference{\vParts i}\in\mathbb{Z}$ such that whenever
$v\in\vParts i$ and $\vertex'\in\vParts i'$, then\label{enu:equal_differences}
\[
\difference{\vParts i}=\nrEdgeFromXtoY{\vertex'}{\xPart}-\nrEdgeFromXtoY{\vertex}{\xPart}=\nrEdgeFromXtoY{\xPart}{\vertex'}-\nrEdgeFromXtoY{\xPart}{\vertex},
\]

\item $\unlinked{\xertex}{\wParts k}$, $\halflinked{\xertex}{\wParts k}$,
or $\fullylinked{\xertex}{\wParts k}$, and $\unlinked{\wParts k}{\xertex}$,
$\halflinked{\wParts k}{\xertex}$, or $\fullylinked{\wParts k}{\xertex}$,\label{enu:edges_between_W_and_X}
\item whenever $\wertex,\wertex'\in\wParts k$, then $\nrEdgeFromXtoY{\wertex}{\xPart}=\nrEdgeFromXtoY{\wertex'}{\xPart}$
and $\nrEdgeFromXtoY{\xPart}{\wertex}=\nrEdgeFromXtoY{\xPart}{\wertex'}$,\label{enu:Outdegrees_indegrees_on_W}
\item $\unlinked{\vParts i\sqcup\vParts i'}{\wParts k}$ or $\fullylinked{\vParts i\sqcup\vParts i'}{\wParts k}$,
and $\unlinked{\wParts k}{\vParts i\sqcup\vParts i'}$ or $\fullylinked{\wParts k}{\vParts i\sqcup\vParts i'}$,
\label{enu:edges_between_Vi_ViPrime_and_W}
\item every  $\vertex\in\vParts i$ satisfies $\fullylinked{\fullylinked{\vertex}{\subgraphIsomorphism(v)}}{\vertex}$
and $\unlinked{\unlinked{\vertex}{\vParts i'\backslash\{\subgraphIsomorphism(v)\}}}{\vertex}$,\label{enu:edges_between_Vi_and_ViPrime}
\item if $i\neq j$ and $\difference{\vParts i}\neq\difference{\vParts j}$,
then $\unlinked{\vParts i}{\vParts j'}$ and $\unlinked{\vParts i'}{\vParts j}$,
or $\fullylinked{\vParts i}{\vParts j'}$ and $\fullylinked{\vParts i'}{\vParts j}$,
and $\unlinked{\vParts i}{\vParts j}$ and $\unlinked{\vParts i'}{\vParts j'}$,
or $\fullylinked{\vParts i}{\vParts j}$ and $\fullylinked{\vParts i'}{\vParts j'}$;\\
if $i\neq j$ and $\difference{\vParts i}=\difference{\vParts j}$,
then $\unlinked{\vParts i}{\vParts j'}$ and $\unlinked{\vParts i'}{\vParts j}$,
or $\fullylinked{\vParts i}{\vParts j'}$ and $\fullylinked{\vParts i'}{\vParts j}$,
or $\halflinked{\vParts i}{\vParts j'}$, $\halflinked{\vParts i'}{\vParts j}$,
$\nrEdgeFromXtoY{\vParts i}{\vertex'}=\frac{1}{2}|\vParts i|$ for
every $\vertex'\in\vParts j'$, and there is an edge from $\vertex\in\vParts i$
to $\vertex'\in\vParts j'$ if and only if there is no edge from $\subgraphIsomorphism(\vertex)\in\vParts i'$
to $\subgraphIsomorphism^{-1}(\vertex')\in\vParts j$.\label{enu:edges_between_Vi_Vj_ViPrime_VjPrime}
\end{enumerate}
Let $\mate{\graph}$ be the simple digraph obtained from $\graph$
by performing all possible switchings of the form $(\xertex,\vParts i\sqcup\vParts i')$,
$(\vParts i\sqcup\vParts i',\xertex)$, $(\xertex,\wParts k)$, and
$(\wParts k,\xertex)$, where $x$ ranges over $\xPart$. Then, $\graph$
and $\mate{\graph}$ are zeta-equivalent and have zeta-equivalent
complements.
\end{thm}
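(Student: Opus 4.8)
The plan is to exhibit a single real orthogonal matrix $Q$ with constant row and column sums equal to $1$ (equivalently $Q\allOnesMatrix{\nrVertices{\graph}}=\allOnesMatrix{\nrVertices{\graph}}Q=\allOnesMatrix{\nrVertices{\graph}}$) that simultaneously conjugates the four structural matrices of $\graph$ into those of $\mate{\graph}$,
\[
Q^{-1}\adjacencyMatrix{\graph}Q=\adjacencyMatrix{\mate{\graph}},\quad Q^{-1}\adjacencyMatrix{\graph}^{T}Q=\adjacencyMatrix{\mate{\graph}}^{T},\quad Q^{-1}\outdegreeMatrix{\graph}Q=\outdegreeMatrix{\mate{\graph}},\quad Q^{-1}\indegreeMatrix{\graph}Q=\indegreeMatrix{\mate{\graph}}.
\]
Conjugating the generalized Laplacian $\variableCBC{\forward}{}\outdegreeMatrix{\graph}+\variableCBC{\backward}{}\indegreeMatrix{\graph}+\variableL{\forward}\adjacencyMatrix{\graph}+\variableL{\backward}\adjacencyMatrix{\graph}^{T}$ by this fixed $Q$ then yields $\generalizedCharacteristicPolynomial{\graph}=\generalizedCharacteristicPolynomial{\mate{\graph}}$, which is condition~(2) of Theorem~\ref{thm:Zeta_equivalent_digraphs} and hence zeta-equivalence. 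Since $Q$ also commutes with $\allOnesMatrix{\nrVertices{\graph}}$ and with $\identityMatrix{\nrVertices{\graph}}$, it carries the matrix underlying $\generalizedCharacteristicPolynomial{\graph}^{\mathrm{c}}$ in Proposition~\ref{prop:J_cospectrality} for $\graph$ into that for $\mate{\graph}$, so $\generalizedCharacteristicPolynomial{\graph}^{\mathrm{c}}=\generalizedCharacteristicPolynomial{\mate{\graph}}^{\mathrm{c}}$; specializing $y$ and using $\adjacencyMatrix{\complementDiGraph{\graph}}=\allOnesMatrix{\nrVertices{\graph}}-\identityMatrix{\nrVertices{\graph}}-\adjacencyMatrix{\graph}$ together with the analogous degree identities gives $\generalizedCharacteristicPolynomial{\complementDiGraph{\graph}}=\generalizedCharacteristicPolynomial{\complementDiGraph{\mate{\graph}}}$, so the complements are zeta-equivalent as well.

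I would build $Q$ block-diagonally along $\vertices{\graph}=\vParts{}\sqcup\vParts{}'\sqcup\wParts{}\sqcup\xPart$: the identity on $\xPart$, the Godsil--McKay reflection $\frac{2}{|\wParts k|}\allOnesMatrix{|\wParts k|}-\identityMatrix{|\wParts k|}$ on each $\wParts k$, and on each $\vParts i\sqcup\vParts i'$ a reflection interchanging $\vParts i$ and $\vParts i'$ along $\subgraphIsomorphism$. The verification proceeds interface by interface. On the $\xPart$-to-cell blocks, conditions~(1),(2),(4) restrict each row of $\adjacencyMatrix{\graph}$ to the all-present, all-absent, or two half-patterns, and the reflection fixes the first two and exchanges the last two — exactly the prescribed switching — while the within-$\xPart$ block is left untouched, matching $\mate{\graph}$. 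On the blocks internal to $\vParts{}\sqcup\vParts{}'\sqcup\wParts{}$, where switching changes no edges, $Q$ must \emph{commute} with $\adjacencyMatrix{\graph}$ and $\adjacencyMatrix{\graph}^{T}$; here conditions~(6),(7) and the isomorphism $\subgraphIsomorphism$ give the within-cell and full/empty inter-cell blocks the constant-sum and $\subgraphIsomorphism$-symmetry that make the reflections fix them, as in ordinary Godsil--McKay switching. For the degree matrices one checks from conditions~(1),(2),(6),(7) and equitability that $\outdegreeMatrix{\graph}$ and $\indegreeMatrix{\graph}$ are constant on each $\wParts k$ and differ across $\vParts i\leftrightarrow\vParts i'$ by precisely the integer $\difference{\vParts i}$ of condition~(3); a direct count shows that the switchings swap the out- and in-degrees of $\vertex$ and $\subgraphIsomorphism(\vertex)$, which is reproduced by the interchanging reflection. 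The statements for $\adjacencyMatrix{\graph}^{T}$ and $\indegreeMatrix{\graph}$ follow from those for $\adjacencyMatrix{\graph}$ and $\outdegreeMatrix{\graph}$ because the hypotheses are symmetric under reversing all edges (condition~(3), for instance, records both directions).

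The hard part is the block $\vParts i\sqcup\vParts i'$ for the half-linked pairs of condition~(8), where two requirements collide. Because $\difference{\vParts i}$ may be nonzero, $Q$ must genuinely swap $\vParts i$ and $\vParts i'$ — it must anticommute with $\mathrm{diag}(\identityMatrix{},-\identityMatrix{})$ in order to carry $\outdegreeMatrix{\graph}$ to $\outdegreeMatrix{\mate{\graph}}$ — so a symmetric reflection treating $\vParts i$ and $\vParts i'$ alike cannot be used. Yet the complementary pattern of condition~(8), that $\vertex\to\vertex'$ holds iff $\subgraphIsomorphism(\vertex)\to\subgraphIsomorphism^{-1}(\vertex')$ does not, is destroyed by a plain interchange, which sends the half-linked cross-block $\Gamma$ to $\allOnesMatrix{}-\Gamma$ instead of fixing it. The reconciling device is the identity $\bigl(\tfrac{2}{m}\allOnesMatrix{m}-\identityMatrix{m}\bigr)\Gamma=\allOnesMatrix{m}-\Gamma$, valid for any $\tfrac{m}{2}$-column-regular $0/1$ block $\Gamma$: I would refine the interchange on $\vParts i\sqcup\vParts i'$ by cell-level Godsil--McKay reflections $\tfrac{2}{m}\allOnesMatrix{m}-\identityMatrix{m}$, chosen so that each half-linked pair $\{i,j\}$ acquires exactly the complementation demanded by condition~(8) while the straight inter-cell, within-cell, $\wParts{}$-interface, and degree blocks remain correctly transported. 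Checking that such cell reflections can be assigned consistently over all half-linked pairs, and that they still commute with the remaining inter-cell blocks forced by equitability and condition~(8), is the delicate computational heart of the argument; the rest is bookkeeping with the Godsil--McKay identities above.
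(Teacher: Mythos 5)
There is a genuine gap, and it lies exactly where you locate the ``delicate computational heart'': no single constant conjugator of the kind you are trying to build exists in general, so the plan cannot be completed. Whenever some $\difference{\vParts i}\neq0$, the switchings change the out- and in-degrees on $\vParts i$ by $+\difference{\vParts i}$ and on $\vParts i'$ by $-\difference{\vParts i}$, so any matrix intertwining $\outdegreeMatrix{\graph}$ with $\outdegreeMatrix{\mate{\graph}}$ must exchange the corresponding eigenspaces, i.e.\ genuinely swap $\vParts i$ and $\vParts i'$; but the matrix that intertwines $\adjacencyMatrix{\graph}$ with $\adjacencyMatrix{\mate{\graph}}$ is forced (by the half-linked cross-blocks of condition~(\ref{enu:edges_between_Vi_Vj_ViPrime_VjPrime}) and the switched $\xPart$-interfaces) to be the non-swapping Godsil--McKay reflection $\frac{2}{2|\vParts i|}\allOnesMatrix{2|\vParts i|}-\identityMatrix{2|\vParts i|}$ on $\vParts i\sqcup\vParts i'$. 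These two requirements are incompatible for a single matrix: the paper exhibits the pair of Figure~\ref{fig:Zeta_equivalent_complementary_graphs}, produced by exactly this construction, for which \emph{no} invertible $Q$ satisfies both $Q\adjacencyMatrix{\graph}Q^{-1}=\adjacencyMatrix{\mate{\graph}}$ and $Q\degreeMatrix{\graph}Q^{-1}=\degreeMatrix{\mate{\graph}}$ (this non-existence is the very point of the construction, answering a question of Wang--Li--Lu--Xu). Your proposed repair by ``cell-level Godsil--McKay reflections'' cannot evade this obstruction, since it still produces one fixed matrix.

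The actual proof uses a conjugator that depends on the spectral variables: one takes the block-diagonal reflection $Q$ (which does satisfy $Q\adjacencyMatrix{\graph}Q=\adjacencyMatrix{\mate{\graph}}$ but \emph{fails} for the degree matrices, with error $[Q\outdegreeMatrix{\graph}-\outdegreeMatrix{\mate{\graph}}Q]_{\vvPrimeParts i\vvPrimeParts i}=\frac{\difference{\vParts i}}{2}\diagonalMatrix{-R_{|\vParts i|}^{\mathrm{sym}},R_{|\vParts i|}^{\mathrm{sym}}}$) together with a skew-symmetric correction $R$ supported on the $\vParts i\sqcup\vParts i'$ blocks with $\difference{\vParts i}\neq0$, which does swap $\vParts i\leftrightarrow\vParts i'$. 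One then checks $R\outdegreeMatrix{\graph}=\outdegreeMatrix{\mate{\graph}}R$, $R\indegreeMatrix{\graph}=\indegreeMatrix{\mate{\graph}}R$, $R\allOnesMatrix{\nrVertices{\graph}}=\allOnesMatrix{\nrVertices{\graph}}R=0$, and the cancellation $R\adjacencyMatrix{\graph}-\adjacencyMatrix{\mate{\graph}}R=-(Q\outdegreeMatrix{\graph}-\outdegreeMatrix{\mate{\graph}}Q)=-(Q\indegreeMatrix{\graph}-\indegreeMatrix{\mate{\graph}}Q)$, so that the invertible pencil $Q+\variableCBC{}{}R$ with $\variableCBC{}{}=(\variableCBC{\forward}{}+\variableCBC{\backward}{})/(\variableL{\forward}+\variableL{\backward})$ conjugates the completely generalized Laplacian of $\graph$ into that of $\mate{\graph}$ for generic parameter values, whence the polynomial identity everywhere. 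Your peripheral observations (the $\xPart$- and $\wParts k$-interfaces behaving as in ordinary GM-switching, the degree bookkeeping via condition~(\ref{enu:equal_differences}), and the reduction of the complement statement to the $y\allOnesMatrix{\nrVertices{\graph}}$ term) are sound, but the missing idea is precisely this variable-dependent affine conjugator.
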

If $\graph$ is a graph and $\vParts{}=\vParts{}'=\varnothing$, then
Theorem~\ref{thm:switching} reduces to GM{*}-switching as introduced
in~\cite{HaemersSpence2004}, which is a special case of Godsil-McKay
switching~\cite{GodsilMcKay1982}. The latter method uses that since
$\wParts{}=\bigsqcup_{k=1}^{\nrWParts}\wParts k$ is equitable, there
exists an invertible matrix $Q$, given below, such that the $(\xertex,\wParts k)$
and $(\wParts k,\xertex)$ switchings can be expressed as $\adjacencyMatrix{\mate{\graph}}=Q\adjacencyMatrix{\graph}Q^{-1}$.
The GM{*}-switching method adds the condition~(\ref{enu:Outdegrees_indegrees_on_W})
to obtain $\degreeMatrix{\mate{\graph}}=Q\degreeMatrix{\graph}Q^{-1}$
for the same matrix $Q$, which implies zeta-equivalence. In contrast,
Theorem~\ref{thm:switching} applies to the graphs in Figure~\ref{fig:Zeta_equivalent_complementary_graphs}
with $\wParts{}=\varnothing$ and 
\[
\vParts 1=\{1,2\},\vParts 1'=\{3,4\},\vParts 2=\{5,6\},\vParts 2'=\{7,8\},\xPart=\{9\},\text{ and }\subgraphIsomorphism(\vertex)=\vertex+2,
\]
but there is no invertible matrix $Q$ with $\adjacencyMatrix{\mate{\graph}}=Q\adjacencyMatrix{\graph}Q^{-1}$
and $\degreeMatrix{\mate{\graph}}=Q\degreeMatrix{\graph}Q^{-1}$.
The existence of zeta-equivalent graphs with this property was questioned
in~\cite{WangLiLuXu2011}. Below, we introduce a matrix $R$ such
that for all $\variableCBC{}{}\in\mathbb{R}$ the affine combination
$Q+\variableCBC{}{}R$ is invertible and conjugates $\variableCBC{}{}\degreeMatrix{\graph}+\adjacencyMatrix{\graph}$
into $\variableCBC{}{}\degreeMatrix{\mate{\graph}}+\adjacencyMatrix{\mate{\graph}}$,
which gives zeta-equivalence. Similarly, Durfee and Martin~\cite{DurfeeMartin2014}
used conjugators that are affine in $\variableCBC{}{}$ to obtain
zeta-equivalent graphs, including the ones in Figure~\ref{fig:Zeta_equivalent_complementary_graphs}.
Incidentally, we even found zeta-equivalent graphs such that any conjugator
with polynomial entries in $\variableCBC{}{}$ is at least quadratic
in $\variableCBC{}{}$.
\begin{proof}
[Proof of Theorem \ref{thm:switching}]For $\nrVertices{},\nrVertices{}'\in\mathbb{Z}_{>0}$,
let $\allOnesMatrix{\nrVertices{}\nrVertices{}'}=\{1\}^{\nrVertices{}\times\nrVertices{}'}$,
$\allOnesMatrix{\nrVertices{}}=\allOnesMatrix{\nrVertices{}\nrVertices{}}$,
and 
\[
Q_{\nrVertices{}}=\frac{2}{\nrVertices{}}\allOnesMatrix{\nrVertices{}}-\identityMatrix{\nrVertices{}}.
\]
Following~\cite{GodsilMcKay1982}, we note that $Q_{\nrVertices{}}\allOnesMatrix{\nrVertices{}\nrVertices{}'}=\allOnesMatrix{\nrVertices{}\nrVertices{}'}Q_{\nrVertices{}'}=\allOnesMatrix{\nrVertices{}\nrVertices{}'}$
and $Q_{\nrVertices{}}=Q_{\nrVertices{}}^{T}=Q_{\nrVertices{}}^{-1}$.
Moreover, if $\block{}{}\in\mathbb{C}^{\nrVertices{}\times\nrVertices{}'}$
has constant row sums and constant column sums, then $Q_{\nrVertices{}}\block{}{}=\block{}{}Q_{\nrVertices{}'}$,
and if $\block{}{}\in\{0,1\}^{2\nrVertices{}\times\nrVertices{}'}$
has $\nrVertices{}$ zeros and $\nrVertices{}$ ones in each of its
columns, then $Q_{2\nrVertices{}}\block{}{}=\allOnesMatrix{2\nrVertices{}\times\nrVertices{}'}-\block{}{}$.
Let
\[
R_{\nrVertices{}}^{\mathrm{sym}}=(R_{\nrVertices{}}^{\mathrm{sym}})^{T}=Q_{\nrVertices{}}-\identityMatrix{\nrVertices{}}=\frac{2}{\nrVertices{}}\allOnesMatrix{\nrVertices{}}-2\identityMatrix{\nrVertices{}}
\]
and
\[
R_{2\nrVertices{}}=\left(\begin{array}{cc}
0 & R_{\nrVertices{}}^{\mathrm{sym}}\\
-R_{\nrVertices{}}^{\mathrm{sym}} & 0
\end{array}\right).
\]
In particular, if $\block{}{}\in\mathbb{C}^{\nrVertices{}\times\nrVertices{}'}$
has constant row sums and constant column sums, then $R_{\nrVertices{}}^{\mathrm{sym}}\block{}{}=\block{}{}R_{\nrVertices{}'}^{\mathrm{sym}}$.
Moreover, we have $R_{\nrVertices{}}^{\mathrm{sym}}\allOnesMatrix{\nrVertices{}\nrVertices{}'}=\allOnesMatrix{\nrVertices{}\nrVertices{}'}R_{\nrVertices{}'}^{\mathrm{sym}}=0$,
and therefore $Q_{2\nrVertices{}}R_{2\nrVertices{}}=(\frac{1}{\nrVertices{}}\allOnesMatrix{2\nrVertices{}}-\identityMatrix{2\nrVertices{}})R_{2\nrVertices{}}=-R_{2\nrVertices{}}$.
Since $R_{2\nrVertices{}}$ is skew-symmetric, its eigenvalues are
purely imaginary. For $t\in\mathbb{R}$, it follows that $Q_{2\nrVertices{}}(Q_{2\nrVertices{}}+\variableCBC{}{}R_{2\nrVertices{}})=\identityMatrix{2\nrVertices{}}-\variableCBC{}{}R_{2\nrVertices{}}$
is invertible, which implies the same statement for $Q_{2\nrVertices{}}+\variableCBC{}{}R_{2\nrVertices{}}$.

We number the vertices of $\graph$ according to the partition
\[
\vertices{\graph}=(\vParts 1\sqcup\vParts 1')\sqcup\ldots\sqcup(\vParts{\nrVParts}\sqcup\vParts{\nrVParts}')\sqcup\wParts 1\sqcup\ldots\sqcup\wParts{\nrWParts}\sqcup\xPart
\]
in such a way that $\subgraphIsomorphism\colon\vParts{}\overset{\sim}{\to}\vParts{}'$
is an order isomorphism. Then, $\adjacencyMatrix{\graph}$ takes the
block form
\[
\adjacencyMatrix{\graph}=\left(\begin{array}{lclccll}
\block{\vvPrimeParts 1}{\vvPrimeParts 1} & \cdots & \block{\vvPrimeParts 1}{\vvPrimeParts{\nrVParts}} & \block{\vvPrimeParts 1}{\wParts 1} & \cdots & \block{\vvPrimeParts 1}{\wParts{\nrWParts}} & \block{\vvPrimeParts 1}{\xPart}\\
\quad\vdots &  & \quad\vdots & \quad\vdots &  & \quad\vdots & \quad\vdots\\
\block{\vvPrimeParts{\nrVParts}}{\vvPrimeParts 1} & \cdots & \block{\vvPrimeParts{\nrVParts}}{\vvPrimeParts{\nrVParts}} & \block{\vvPrimeParts{\nrVParts}}{\wParts 1} & \cdots & \block{\vvPrimeParts{\nrVParts}}{\wParts{\nrWParts}} & \block{\vvPrimeParts{\nrVParts}}{\xPart}\\
\block{\wParts 1}{\vvPrimeParts 1} & \cdots & \block{\wParts 1}{\vvPrimeParts{\nrVParts}} & \block{\wParts 1}{\wParts 1} & \cdots & \block{\wParts 1}{\wParts{\nrWParts}} & \block{\wParts 1}{\xPart}\\
\quad\vdots &  & \quad\vdots &  &  & \quad\vdots & \quad\vdots\\
\block{\wParts{\nrWParts}}{\vvPrimeParts 1} & \cdots & \block{\wParts{\nrWParts}}{\vvPrimeParts{\nrVParts}} & \block{\wParts{\nrWParts}}{\wParts 1} & \cdots & \block{\wParts{\nrWParts}}{\wParts{\nrWParts}} & \block{\wParts{\nrWParts}}{\xPart}\\
\block{\xPart}{\vvPrimeParts 1} & \cdots & \block{\xPart}{\vvPrimeParts{\nrVParts}} & \block{\xPart}{\wParts 1} & \cdots & \block{\xPart}{\wParts{\nrWParts}} & \block{\xPart}{\xPart}
\end{array}\right)
\]
with subblocks of the form\setlength{\arraycolsep}{2pt}
\[
\block{\vvPrimeParts i}{\vvPrimeParts j}=\left(\begin{array}{ll}
\block{\vParts i}{\vParts j} & \block{\vParts i}{\vParts j'}\\
\block{\vParts i'}{\vParts j} & \block{\vParts i'}{\vParts j'}
\end{array}\right),\block{\vvPrimeParts i}Y=\left(\begin{array}{l}
\block{\vParts i}Y\\
\block{\vParts i'}Y
\end{array}\right),\text{ and }\block Y{\vvPrimeParts i}=\left(\begin{array}{lc}
\block Y{\vParts i} & \block Y{\vParts i'}\end{array}\right).
\]
\setlength{\arraycolsep}{\myArraycolsep}Blocks involving $\xPart$
are constrained by conditions~(\ref{enu:edges_between_V_and_X})
to~(\ref{enu:Outdegrees_indegrees_on_W}), blocks involving~$\vParts{}$
and $\wParts{}$ are constrained by~(\ref{enu:edges_between_Vi_ViPrime_and_W}),
and blocks involving only $\vParts{}$ are constrained by~(\ref{enu:edges_between_Vi_and_ViPrime})~and~(\ref{enu:edges_between_Vi_Vj_ViPrime_VjPrime}).
More precisely, (\ref{enu:edges_between_V_and_X}) and (\ref{enu:edges_between_VPrime_and_X})
say that any column of $\block{\vParts i}{\xPart}$ or $\block{\vParts i}{'\xPart}$
and any row of $\block{\xPart}{\vParts i}$ or $\block{\xPart}{\vParts i'}$
is either the zero vector or the all-ones vector. Similarly, (\ref{enu:edges_between_W_and_X})
says that any column of $\block{\wParts k}{\xPart}$ and any row of
$\block{\xPart}{\wParts k}$ contains $0$, $\frac{1}{2}|\wParts k|$,
or $|\wParts k|$ ones, with zeros in the other entries. Also, (\ref{enu:edges_between_Vi_ViPrime_and_W})
says that $\block{\vvPrimeParts i}{\wParts k},\block{\wParts k}{\vvPrimeParts i}^{T}\in\{0,\allOnesMatrix{2|\vParts i|\times|\wParts k|}\}$.
With regard to $\block{\vParts i}{\vParts j}$ and $\block{\wParts k}{\wParts l}$,
recall that $\bigsqcup_{i=1}^{\nrVParts}\vParts i$ and $\bigsqcup_{k=1}^{\nrWParts}\wParts k$
are equitable so that these matrices have constant row sums and constant
column sums. By virtue of $\subgraphIsomorphism\colon\vParts{}\overset{\sim}{\to}\vParts{}'$,
we moreover have $\block{\vParts i}{\vParts j}=\block{\vParts i'}{\vParts j'}$.
Condition~(\ref{enu:edges_between_Vi_and_ViPrime}) can be rephrased
as $\block{\vParts i}{\vParts i'}=\block{\vParts i'}{\vParts i}=\identityMatrix{|\vParts i|}$,
and (\ref{enu:edges_between_Vi_Vj_ViPrime_VjPrime}) says that if
$i\neq j$, then $\block{\vParts i}{\vParts j'}$ and $\block{\vParts i'}{\vParts j}$
have the same constant row sums and the same constant column sums,
and $\block{\vParts i}{\vParts j'}+\block{\vParts i'}{\vParts j}$
is a multiple of $\allOnesMatrix{|\vParts i|\times|\vParts j|}$,
regardless of which case applies. Hence, $\block{\vvPrimeParts i}{\vvPrimeParts j}$,
$\block{\vvPrimeParts i}{\wParts k}$, and $\block{\wParts k}{\vvPrimeParts j}$
have constant row sums and constant column sums. Thus, $\outdegreeMatrix{\graph}$
is block-diagonal with $\vParts i\vParts i$- and $\vParts i'\vParts i'$-blocks
of the form $[\outdegreeMatrix{\graph}]_{\vParts i\vParts i}=\outDegree{\vParts i}\identityMatrix{|\vParts i|}$
and $[\outdegreeMatrix{\graph}]_{\vParts i'\vParts i'}=(\outDegree{\vParts i}+\difference{\vParts i})\identityMatrix{|\vParts i|}$,
respectively, and similarly for~$\indegreeMatrix{\graph}$. Equally,
each $[\outdegreeMatrix{\graph}]_{\wParts k\wParts k}$ or $[\indegreeMatrix{\graph}]_{\wParts k\wParts k}$
is a multiple of $\identityMatrix{|\wParts k|}$ by virtue of~(\ref{enu:Outdegrees_indegrees_on_W}).
We define block-diagonal matrices 
\begin{eqnarray*}
Q & = & \diagonalMatrix{Q_{2|\vParts 1|},\ldots,Q_{2|\vParts{\nrVParts}|},Q_{|\wParts 1|},\ldots,Q_{|\wParts{\nrWParts}|},\identityMatrix{|\xPart|}},\\
R & = & \frac{1}{4}\diagonalMatrix{\difference{\vParts 1}R_{2|\vParts 1|},\ldots,\difference{\vParts{\nrVParts}}R_{2|\vParts{\nrVParts}|},0,\ldots,0,0}.
\end{eqnarray*}
For $t\in\mathbb{R}$, the diagonal blocks of $Q+\variableCBC{}{}R$
are invertible, which is why $Q+\variableCBC{}{}R$ is invertible.
Performing all switchings of the form $(\xertex,\vParts i\sqcup\vParts i')$,
where $\xertex$ ranges over $\xPart$, corresponds to replacing $\block{\xPart}{\vvPrimeParts i}$
by $\block{\xPart}{\vvPrimeParts i}Q_{2|\vParts i|}$, and similarly
for $(\vParts i\sqcup\vParts i',\xertex)$, $(\xertex,\wParts k)$,
and $(\wParts k,\xertex)$. Taken all together, we have $Q\adjacencyMatrix{\graph}Q=\adjacencyMatrix{\mate{\graph}}$,
which gives $Q\adjacencyMatrix{\graph}^{T}Q=\adjacencyMatrix{\mate{\graph}}^{T}$
by virtue of $Q=Q^{T}=Q^{-1}$. Moreover, $[\outdegreeMatrix{\graph}]_{\vParts i\vParts i}=[\outdegreeMatrix{\mate{\graph}}]_{\vParts i'\vParts i'}$
and $[\outdegreeMatrix{\graph}]_{\vParts i'\vParts i'}=[\outdegreeMatrix{\mate{\graph}}]_{\vParts i\vParts i}$.
We obtain that $R\outdegreeMatrix{\graph}=\outdegreeMatrix{\mate{\graph}}R$
since their $\vvPrimeParts i\vvPrimeParts i$-blocks are given by\setlength{\arraycolsep}{2pt}
\[
\begin{array}{l}
{\displaystyle \frac{\difference{\vParts i}}{4}}\left(\begin{array}{cc}
0 & R_{|\vParts i|}^{\mathrm{sym}}\\
-R_{|\vParts i|}^{\mathrm{sym}} & 0
\end{array}\right)\left(\begin{array}{cc}
\outDegree{\vParts i}\identityMatrix{|\vParts i|} & 0\\
0 & (\outDegree{\vParts i}+\difference{\vParts i})\identityMatrix{|\vParts i|}
\end{array}\right)\\
\hspace{35mm}={\displaystyle \frac{\difference{\vParts i}}{4}}\left(\begin{array}{cc}
(\outDegree{\vParts i}+\difference{\vParts i})\identityMatrix{|\vParts i|} & 0\\
0 & \outDegree{\vParts i}\identityMatrix{|\vParts i|}
\end{array}\right)\left(\begin{array}{cc}
0 & R_{|\vParts i|}^{\mathrm{sym}}\\
-R_{|\vParts i|}^{\mathrm{sym}} & 0
\end{array}\right).
\end{array}
\]
Similarly, we obtain $R\indegreeMatrix{\graph}=\indegreeMatrix{\mate{\graph}}R$.
Next, we consider the block-diagonal matrix $Q\outdegreeMatrix{\graph}-\outdegreeMatrix{\mate{\graph}}Q$.
We note that $[\outdegreeMatrix{\graph}]_{\wParts k\wParts k}=\outDegree{\wParts k}\identityMatrix{|\wParts k|}=[\outdegreeMatrix{\mate{\graph}}]_{\wParts k\wParts k}$
as well as $[\outdegreeMatrix{\graph}]_{\xPart\xPart}=[\outdegreeMatrix{\mate{\graph}}]_{\xPart\xPart}$.
In particular, the $\wParts k\wParts k$-blocks and the $\xPart\xPart$-block
of $Q\outdegreeMatrix{\graph}-\outdegreeMatrix{\mate{\graph}}Q$ are
zero, whereas its $\vvPrimeParts i\vvPrimeParts i$-blocks are given
by
\[
\begin{array}{l}
{\displaystyle \frac{1}{|\vParts i|}}\left(\allOnesMatrix{2|\vParts i|}[\outdegreeMatrix{\graph}]_{\vvPrimeParts i\vvPrimeParts i}-[\outdegreeMatrix{\mate{\graph}}]_{\vvPrimeParts i\vvPrimeParts i}\allOnesMatrix{2|\vParts i|}\right)-[\outdegreeMatrix{\graph}]_{\vvPrimeParts i\vvPrimeParts i}+[\outdegreeMatrix{\mate{\graph}}]_{\vvPrimeParts i\vvPrimeParts i}\\
={\displaystyle \frac{1}{|\vParts i|}}\left(\begin{array}{cc}
-\difference{\vParts i}\allOnesMatrix{|\vParts i|} & 0\\
0 & \difference{\vParts i}\allOnesMatrix{|\vParts i|}
\end{array}\right)+\left(\begin{array}{cc}
\difference{\vParts i}\identityMatrix{|\vParts i|} & 0\\
0 & -\difference{\vParts i}\identityMatrix{|\vParts i|}
\end{array}\right)={\displaystyle \frac{\difference{\vParts i}}{2}}\left(\begin{array}{cc}
-R_{|\vParts i|}^{\mathrm{sym}} & 0\\
0 & R_{|\vParts i|}^{\mathrm{sym}}
\end{array}\right).
\end{array}
\]
\setlength{\arraycolsep}{\myArraycolsep}Similarly for $Q\indegreeMatrix{\graph}-\indegreeMatrix{\mate{\graph}}Q$.
Finally, we consider $R\adjacencyMatrix{\graph}-\adjacencyMatrix{\mate{\graph}}R$.
Recall that the nonzero subblocks of $R$, namely $R_{|\vParts i|}^{\mathrm{sym}}$,
annihilate all-ones vectors when multiplied from left or right. In
combination with $[R]_{\wParts k\wParts k}=0$ and $[R]_{\xPart\xPart}=0$,
we obtain that any block of $R\adjacencyMatrix{\graph}-\adjacencyMatrix{\mate{\graph}}R$
which involves a $\wParts k$ or $\xPart$ is zero. It therefore suffices
to consider its $\vvPrimeParts i\vvPrimeParts j$-blocks given by
\[
\frac{1}{4}\left(\begin{array}{ll}
R_{|\vParts i|}^{\mathrm{sym}} & 0\\
0 & R_{|\vParts i|}^{\mathrm{sym}}
\end{array}\right)\left(\begin{array}{ll}
\hphantom{-}\difference{\vParts i}\block{\vParts i'}{\vParts j}+\difference{\vParts j}\block{\vParts i}{\vParts j'} & \hphantom{-}\difference{\vParts i}\block{\vParts i'}{\vParts j'}-\difference{\vParts j}\block{\vParts i}{\vParts j}\\
-\difference{\vParts i}\block{\vParts i}{\vParts j}+\difference{\vParts j}\block{\vParts i'}{\vParts j'} & -\difference{\vParts i}\block{\vParts i}{\vParts j'}-\difference{\vParts j}\block{\vParts i'}{\vParts j}
\end{array}\right),
\]
where we used that the blocks of $\block{\vvPrimeParts i}{\vvPrimeParts j}$
have constant row sums and constant column sums. Since $R_{|\vParts i|}^{\mathrm{sym}}$
annihilates multiples of $\allOnesMatrix{|\vParts i|\times|\vParts j|}$,
this matrix is zero whenever $i\neq j$, regardless of which case
in~(\ref{enu:edges_between_Vi_Vj_ViPrime_VjPrime}) applies. For
$i=j$, we use that $\block{\vParts i}{\vParts i}=\block{\vParts i'}{\vParts i'}$
and $\block{\vParts i}{\vParts i'}=\block{\vParts i'}{\vParts i}=\identityMatrix{|\vParts i|}$
to obtain that
\begin{eqnarray*}
[R\adjacencyMatrix{\graph}-\adjacencyMatrix{\mate{\graph}}R]_{\vvPrimeParts i\vvPrimeParts i} & = & {\displaystyle \frac{\difference{\vParts i}}{2}}\left(\begin{array}{cc}
R_{|\vParts i|}^{\mathrm{sym}} & 0\\
0 & -R_{|\vParts i|}^{\mathrm{sym}}
\end{array}\right)\\
 & = & -[Q\outdegreeMatrix{\graph}-\outdegreeMatrix{\mate{\graph}}Q]_{\vvPrimeParts i\vvPrimeParts i}=-[Q\indegreeMatrix{\graph}-\indegreeMatrix{\mate{\graph}}Q]_{\vvPrimeParts i\vvPrimeParts i}.
\end{eqnarray*}
Similarly, $R\adjacencyMatrix{\graph}^{T}-\adjacencyMatrix{\mate{\graph}}^{T}R=R\adjacencyMatrix{\graph}-\adjacencyMatrix{\mate{\graph}}R=-(Q\outdegreeMatrix{\graph}-\outdegreeMatrix{\mate{\graph}}Q)=-(Q\indegreeMatrix{\graph}-\indegreeMatrix{\mate{\graph}}Q)$.

For $\boldsymbol{z}=(x,y,\variableCBC{\forward}{},\variableCBC{\backward}{},\variableL{\forward},\variableL{\backward})\in\mathbb{C}^{6}$,
define 
\[
L_{\graph}(\boldsymbol{z})=x\identityMatrix{\nrVertices{\graph}}+y\allOnesMatrix{\nrVertices{\graph}}+\variableCBC{\forward}{}\outdegreeMatrix{\graph}+\variableCBC{\backward}{}\indegreeMatrix{\graph}+\variableL{\forward}\adjacencyMatrix{\graph}+\variableL{\backward}\adjacencyMatrix{\graph}^{T},
\]
and likewise for $L_{\mate{\graph}}(\boldsymbol{z})$, so that $\generalizedCharacteristicPolynomial{\graph}^{\mathrm{c}}(\boldsymbol{z})=\det(L_{\graph}(\boldsymbol{z}))$
and $\generalizedCharacteristicPolynomial{\mate{\graph}}^{\mathrm{c}}(\boldsymbol{z})=\det(L_{\mate{\graph}}(\boldsymbol{z}))$,
respectively. We note that $Q\allOnesMatrix{\nrVertices{\graph}}=\allOnesMatrix{\nrVertices{\graph}}=\allOnesMatrix{\nrVertices{\graph}}Q$
and $R\allOnesMatrix{\nrVertices{\graph}}=0=\allOnesMatrix{\nrVertices{\graph}}R$
to obtain that for every $\variableCBC{}{}\in\mathbb{C}$
\[
(Q+\variableCBC{}{}R)L_{\graph}(\boldsymbol{z})-L_{\mate{\graph}}(\boldsymbol{z})(Q+\variableCBC{}{}R)=(\variableCBC{}{}(\variableL{\forward}+\variableL{\backward})-(\variableCBC{\forward}{}+\variableCBC{\backward}{}))(R\adjacencyMatrix{\graph}-\adjacencyMatrix{\mate{\graph}}R).
\]
Hence, $\generalizedCharacteristicPolynomial{\graph}^{\mathrm{c}}(\boldsymbol{z})=\generalizedCharacteristicPolynomial{\mate{\graph}}^{\mathrm{c}}(\boldsymbol{z})$
whenever $\variableL{\forward}\neq-\variableL{\backward}$ and $\variableCBC{}{}=(\variableCBC{\forward}{}+\variableCBC{\backward}{})/(\variableL{\forward}+\variableL{\backward})$
is different from the finitely many roots of $\variableCBC{}{}\mapsto\det(Q+\variableCBC{}{}R)$,
which implies $\generalizedCharacteristicPolynomial{\graph}^{\mathrm{c}}(\boldsymbol{z})=\generalizedCharacteristicPolynomial{\mate{\graph}}^{\mathrm{c}}(\boldsymbol{z})$
for all $\boldsymbol{z}\in\mathbb{C}^{6}$.
\end{proof}
\bibliographystyle{amsalpha}
\bibliography{Zeta_equivalent_digraphs}

\end{document}